\documentclass[10pt,reqno]{amsart}
\usepackage{amsmath,amsthm,amssymb,mathtools,amscd}
\usepackage{latexsym,graphicx,color,indentfirst}
\usepackage{algorithmic}
\usepackage[linesnumbered,boxed,norelsize]{algorithm2e}
\usepackage{fancyhdr}
\usepackage{mathrsfs}
\usepackage{float}
\usepackage{float}
\usepackage{color}
\usepackage{listings}
\usepackage{multirow}
\usepackage{graphicx}
\usepackage{mathrsfs}
\usepackage{ifpdf}
\usepackage{epstopdf}
\usepackage{enumitem}

\newtheorem{theorem}{Theorem}[section]
\newtheorem{lemma}[theorem]{Lemma}
\newtheorem{assumption}[theorem]{Assumption}

\newtheorem{proposition}[theorem]{Proposition}

\newtheorem{remark}[theorem]{Remark}

\newcommand{\eps}{\varepsilon}

\date{\today}

\def\eps{\varepsilon}
\def\geq{\geqslant}
\def\ge{\geqslant}

\def\l{\left}
\def\le{\leqslant}
\def\leq{\leqslant}
\def\r{\right}

\def\p{\partial}
\def\d{\delta}

\def\l{\langle}
\def\r{\rangle}
\def\N{\mathcal N}
\def\R{\mathcal R}
\def\D{\mathscr D}
\def\X{\mathcal X}
\def\Y{\mathcal Y}

\def\a{\alpha}

\def\d{\delta}
\def\la{\lambda}
\def\X{\mathcal X}
\def\Y{\mathcal Y}
\def\p{\partial}
\def\d{\delta}

\def\l{\langle}
\def\r{\rangle}
\def\N{\mathcal N}
\def\R{\mathcal R}
\def\D{\mathscr D}
\def\X{\mathcal X}
\def\Y{\mathcal Y}

\def\a{\alpha}

\def\d{\delta}
\def\la{\lambda}
\def\X{\mathcal X}
\def\Y{\mathcal Y}

\title[Two-point gradient method]{Regularization of inverse problems by two-point gradient methods with convex constraints}
\author{Min Zhong}
\address{Department of Mathematics, Southeast University, Nanjing, Jiangsu 210096, People's Republic of China}
\email{\tt min.zhong@seu.edu.cn}
\author{Wei Wang }
\address{Corresponding author. College of Mathematics, Physics and Information Engineering, Jiaxing University, Zhejiang 314001, People's Republic of China}
\email{\tt weiwang${}_{-}$math@126.com}
\author{Qinian Jin}
\address{Mathematical Sciences Institute, Australian National University, Canberra, ACT 2601, Australia}
\email{\tt Qinian.Jin@anu.edu.au}

\keywords{Inverse and ill-posed problems; regularization method; non-smooth penalties; two-point gradient method; convergence}

%\subjclass[2010]{65J20, 47J06, 47A52, 49J40}

\date{}
%\date{\today}

\begin{document}

\begin{abstract}
In this paper, we propose and analyze a two-point gradient  method for solving inverse problems in Banach spaces which
is based on the Landweber iteration and an extrapolation strategy.  The method allows to use non-smooth penalty terms,
including the $L^1$ and the total variation-like penalty functionals, which are significant in reconstructing special
features of solutions such as sparsity and piecewise constancy in practical applications. The design of the method involves
the choices of the step sizes and the combination parameters which are carefully discussed. Numerical simulations are presented
to illustrate the effectiveness of the proposed method.
\end{abstract}

\maketitle

\section{\bf Introduction}
\setcounter{equation}{0}

In this paper we are interested in solving inverse problems of the form
\begin{align}\label{equation}
F(x)=y\,,
\end{align}
where $F:\D(F)\subset \X\rightarrow \Y$ is a  Fr\'{e}chet differentiable operator between two
Banach spaces $\X$ and $\Y$. Throughout this paper we will assume \eqref{equation} has a solution,
which is not necessarily unique. Such inverse problems are ill-posed in the sense of unstable dependence
of solutions on small perturbations of the data. Instead of exact data $y$, in practice we are
given only noisy data $y^\d$ satisfying
\begin{align}\label{noise}
\|y-y^\d\|\leq\d\,.
\end{align}
Consequently, it is necessary to apply regularization methods to solve (\ref{equation}) approximately (\cite{ehn96}).

Landweber iteration is one of the most prominent regularization methods for solving inverse problems
formulated in Hilbert spaces. A complete analysis on this method for linear problems as well as nonlinear
problems can be found in \cite{ehn96,hns95}. This method has received tremendous attention due to
its simple implementation and robustness with respect to noise.

The classical Landweber iteration in Hilbert spaces, however, has the tendency to over-smooth solutions, which makes it
difficult to capture special features of the sought solution such as sparsity and discontinuity. To overcome this drawback,
various reformulations of Landweber iteration either in Banach spaces or in a manner of incorporating general non-smooth
convex penalty functionals have been proposed, see \cite{sls06,kss09,hk10,bh12,j12,jw13,wwh18}. In \cite{sls06,kss09}, using
a gradient method for solving
\begin{align}\label{Phir}
\min \frac{1}{s}\|F(x)-y^\d\|^s,
\end{align}
the authors proposed the Landweber iteration of the form
\begin{equation}\label{lenear1}
\begin{split}
\xi_{n+1}^\d &= \xi_n^\d-\mu_n^\d F'(x_n^\d)^*J_s^{\Y}(F(x_n^\d)-y^\d)\,,\nonumber\\
x_{n+1}^\d &= J_q^{\X^*}(\xi_{n+1}^\d)\,,
\end{split}
\end{equation}
for solving linear as well as nonlinear inverse problems in Banach spaces, assuming suitable smoothness and convexity on
$\X$ and $\Y$, where $F'(x)$ and $F'(x)^*$ denote the Fr\'{e}chet derivative of $F$ at $x$ and its adjoint, $\mu_n^\d$ denotes the step size,
and $J_s^{\Y}:\Y\rightarrow \Y^*$ and $J_q^{\X^*}:\X^*\rightarrow \X$ with $1<s,q<\infty$ denote the duality mappings with gauge functions
$t\rightarrow t^{s-1}$ and $t\rightarrow t^{q-1}$ respectively. This formulation of Landweber iteration, however, exclude the use of
the $L^1$ and the total variation like penalty functionals. A Landweber-type iteration with general uniform convex penalty functionals was
introduced in \cite{bh12} for solving linear inverse problems and was extended in \cite{jw13} for solving nonlinear inverse problems. Let $\Theta:\X\rightarrow(-\infty,\infty]$ be a proper lower semi-continuous uniformly convex functional, the method in \cite{bh12,jw13}
can be formulated as
\begin{equation}\label{linear2}
\begin{split}
\xi_{n+1}^\d &= \xi_n^\d-\mu_n^\d F'(x_n^\d)^*J_s^{\Y}(F(x_n^\d)-y^\d),\\
x_{n+1}^\d &= \textrm{arg}\min_{x\in \X} \left\{\Theta(x)-\langle \xi_{n+1}^\d,x\rangle\right\}.
\end{split}
\end{equation}
The advantage of this method is the freedom on the choice of $\Theta$ so that it can be utilized in detecting different
features of the sought solution.

It is well known that Landweber iteration is a slowly convergent method. As alternatives to Landweber iteration, one may consider the
second order iterative methods, such as the Levenberg-Maquardt method \cite{H1997,jy16}, the iteratively regularized Gauss-Newton
method \cite{jz13,k15}, or the nonstationary iterated Tikhonov regularization \cite{jz14}. The advantage of these methods is that
they require less number of iterations to satisfy the respective stopping rule than the Landweber iteration, however they always require
to spend more computational time in dealing with each iteration step. Therefore, it becomes more desirable to accelerate Landweber
iteration by preserving its simple implementation feature.

For linear inverse problems in Hilbert spaces, a family of accelerated Landweber iterations were proposed in \cite{H1991} using
the orthogonal polynomials and the spectral theory of self-adjoint operators. The acceleration strategy using orthogonal polynomials
is no longer available for Landweber iteration in Banach spaces with general convex penalty functionals. In \cite{hk10} an acceleration of
Landweber iteration in Banach spaces was considered based on choosing optimal step size in each iteration step. In \cite{HJW2015,sls09}
the sequential subspace optimization strategy was employed to accelerate the Landweber iteration.

The Nesterov's strategy was proposed in \cite{n83} to accelerate gradient method. It has played an important role on the development of
fast first order methods for solving well-posed convex optimization problems \cite{AP2016,bt09}. Recently, an accelerated version of Landweber iteration
based on Nesterov's strategy was proposed  in \cite{j16} which includes the following method
\begin{equation}\label{nesterov}
\begin{split}
z_n^\d &= x_n^\d+\frac{n}{n+\alpha}(x_n^\d-x_{n-1}^\d),\\
x_{n+1}^\d &= z_n^\d +\mu_n^\d F'(z_n^\d)^*(y^\d-F(z_n^\d))
\end{split}
\end{equation}
with $\a\ge 3$ as a special case for solving ill-posed inverse problems (\ref{equation}) in Hilbert spaces, where $x_{-1}^\d = x_0^\d=x_0$
is an initial guess. Although no convergence analysis for \eqref{nesterov} could be given, the numerical results presented in \cite{j16,zw19}
clearly demonstrate its usefulness and acceleration effect. By replacing $n/(n+\a)$ in (\ref{nesterov}) by a general connection
parameters $\lambda_n^\d$, a so called two-point gradient method was proposed in \cite{hr17} and a convergence result was proved under
a suitable choice of $\{\lambda_n^\d\}$. Furthermore, based on the assumption of local convexity of the residual functional around the sought
solution, a weak convergence result on (\ref{nesterov}) was proved in \cite{hr18} recently.

In this paper, by incorporating an extrapolation step into the iteration scheme (\ref{linear2}), we propose a two-point gradient method
for solving inverse problems in Banach spaces with a general uniformly convex penalty term $\Theta$, which takes the form
\begin{align*}
\zeta_n^\d &= \xi_n^\d + \lambda_n^\d (\xi_n^\d - \xi_{n-1}^\d),\\
z_n^\d & = \arg\min_{z\in \X} \left\{\Theta(z) - \l \zeta_n^\d, z\r\right\},\\
\xi_{n+1}^\d & = \zeta_n^\d - \mu_n^\d F'(z_n^\d)^* J_s^{\Y} (F(z_n^\d)-y^\d)
\end{align*}
with suitable step sizes $\mu_n^\d$ and combination parameters $\lambda_n^\d$; after terminated by a discrepancy principle,
we then use
$$
x_n^\d := \arg\min_{x\in \X} \left\{ \Theta(x) - \l \xi_n^\d, x\r \right\}
$$
as an approximate solution.  We note that when $\lambda_n^\d=0$, our method becomes the Landweber iteration of the form (\ref{linear2})
and when $\lambda_n^\d = n/(n+\a)$ it becomes a refined version of the Nesterov acceleration of Landweber iteration proposed in \cite{j16}.
We note also that, when both $\X$ and $\Y$ are Hilbert spaces and $\Theta(x)= \|x\|^2/2$, our method becomes the two-point gradient
methods introduced in \cite{hr17} for solving inverse problems in Hilbert spaces. Unlike the method in \cite{hr17}, our method not only
works for inverse problems in Banach spaces, but also allows the use of general convex penalty functions including the $L^1$ and the total
variation like functions.  Due to the possible non-smoothness of $\Theta$ and the non-Hilbertian structures of $\X$ and $\Y$, we need to
use tools from convex analysis and subdifferential calculus to carry out the convergence analysis. Under certain conditions on the
combination parameters $\{\lambda_n^\d\}$, we obtain a convergence result on our method. In order to find nontrivial $\lambda_n^\d$,
we adapt the discrete backtracking search (DBTS) algorithm in \cite{hr17} to our situation and provide a complete convergence analysis of
the corresponding method by showing a uniform convergence result for the noise-free counterpart with the combination parameters chosen in
a certain range. Our analysis in fact improves the convergence result in \cite{hr17} by removing the technical conditions on $\{\lambda_n^\d\}$
chosen by the DBTS algorithm.

The paper is organized as follows, In section 2, we give some preliminaries from convex analysis. In section 3, we
formulate our two-point gradient method with a general uniformly convex penalty term and present
the detailed convergence analysis. We also discuss the choices of the combination parameters by a discrete
backtracking search algorithm. Finally in section 4, numerical simulations are given to test the performance of the method.

\section{\bf Preliminaries}\label{sec:prelim}
\setcounter{equation}{0}

In this section, we introduce some necessary concepts and properties related to Banach space and convex analysis,
we refer to \cite{ZA02} for more details.

Let $\X$ be a Banach space whose norm is denoted by $\|\cdot\|$, we use $\X^*$ to denote its dual space. Given $x\in \X$
and $\xi \in \X^*$, we write $\l \xi, x\r=\xi (x)$ for the duality pairing. For a bounded linear operator $A: \X \to \Y$ between
two Banach spaces $\X$ and $\Y$, we use $A^*: \Y^*\to \X^*$ to denote its adjoint.

Given a convex function $\Theta: \X \to (-\infty, \infty]$, we use $ \p \Theta(x)$ to denote the subdifferential
of $\Theta$  at $x\in \X$, i.e.
$$
\p \Theta(x) := \{\xi \in \X^*: \Theta(\bar x) -\Theta(x) -\l \xi, \bar x-x\r\ge 0 \mbox{ for all } \bar x\in \X\}.
$$
Let $\D(\Theta): = \{x\in \X:\Theta(x)<\infty\}$ be its effective domain and let
\begin{align*}
\D(\p\Theta): = \{x\in \D(\Theta):\p\Theta(x)\neq\varnothing\}.
\end{align*}
 The Bregman distance induced by $\Theta$ at $x$
in the direction $\xi\in \p\Theta(x) $  is defined by
\begin{equation*}
D_\xi \Theta(\bar x,x):=\Theta(\bar x)-\Theta(x)-\l \xi, \bar x-x\r, \qquad \forall \bar x\in  \X
\end{equation*}
which is always nonnegative and satisfies the identity
\begin{equation}\label{2.1}
D_{\xi_2} \Theta(x,x_2)-D_{\xi_1} \Theta(x, x_1) =D_{\xi_2} \Theta(x_1,x_2) +\l \xi_2-\xi_1, x_1-x\r
\end{equation}
for all $x\in \D(\Theta), x_1, x_2\in \D(\p \Theta)$, and $\xi_1\in \p \Theta(x_1)$, $\xi_2\in \p \Theta(x_2)$.

A proper convex function $\Theta: \X \to (-\infty, \infty]$ is called uniformly convex if there exists
a strictly increasing function $h:[0,\infty)\rightarrow [0,\infty)$ with $h(0) = 0$ such that
\begin{equation} \label{2.2}
\Theta(\la \bar x+(1-\la)x) +\la (1-\la)h(\|x-\bar x\|) \le \la \Theta(\bar x) +(1-\la) \Theta(x)
\end{equation}
for all $\bar x,x\in\X$ and $\la\in(0,1)$. If $h(t) = c_0t^p$ for some $c_0>0$ and $p\geq 2$ in (\ref{2.2}),
then $\Theta$ is called $p$-convex. It can be shown that $\Theta$ is $p$-convex if and only if
\begin{align}\label{pconvex}
D_\xi \Theta(\bar x,x)\geq c_0\|x-\bar x\|^p,\quad\forall \bar x\in \X, \ x\in \D(\p\Theta), \ \xi\in\p\Theta(x)\,.
\end{align}

For a proper lower semi-continuous convex function $\Theta: \X \to (-\infty, \infty]$, its Legendre-Fenchel conjugate
is defined by
\begin{equation*}
\Theta^*(\xi):=\sup_{x\in \X} \left\{\l\xi, x\r -\Theta(x)\right\}, \quad \xi\in \X^*
\end{equation*}
which is also proper, lower semi-continuous, and convex. If $\X$ is reflexive, then
\begin{equation}\label{2.3}
\xi\in \p \Theta(x) \Longleftrightarrow x\in \p \Theta^*(\xi) \Longleftrightarrow \Theta(x) +\Theta^*(\xi) =\l \xi, x\r.
\end{equation}
Moreover, if $\Theta$ is $p$-convex with $p\geq 2$ then it follows from \cite[Corollary 3.5.11]{ZA02} that $\D(\Theta^*)=\X^*$,
$\Theta^*$ is Fr\'{e}chet differentiable and its gradient $\nabla \Theta^*:  X^*\to \X$ satisfies
\begin{equation}\label{2.4}
\|\nabla \Theta^*(\xi_1)-\nabla \Theta^*(\xi_2) \|\le \left(\frac{\|\xi_1-\xi_2\|}{2c_0}\right)^{\frac{1}{p-1}},
\quad \forall \xi_1, \xi_2\in \X^*.
\end{equation}
Consequently, it follows from (\ref{2.3}) that
\begin{equation}\label{2.5}
x=\nabla \Theta^*(\xi) \Longleftrightarrow \xi \in \p \Theta(x) \Longleftrightarrow x =\arg \min_{z\in \X} \left\{ \Theta(z) -\l \xi, z\r\right\}.
\end{equation}

\begin{lemma}\label{lem:2.1}
If $\Theta$ is $p$-convex with $p\geq 2$, then for any pairs $(x,\xi)$ and $(\bar x,\bar \xi) $
with $x, \bar x \in \D(\p\Theta), \xi\in\p\Theta(x), \bar \xi \in\p\Theta(\bar{x})$, we have
\begin{align}\label{pconv2}
D_\xi \Theta(\bar x,x) \le \frac{1}{p^*(2c_0)^{p*-1}}\|\xi-\bar \xi\|^{p*}\,,
\end{align}
where $p^*: = p/(p-1)$.
\end{lemma}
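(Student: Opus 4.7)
The plan is to translate the Bregman distance $D_\xi \Theta(\bar x, x)$, which lives on the primal side, into an analogous quantity on the dual side involving $\Theta^*$, and then exploit the Hölder continuity of $\nabla \Theta^*$ provided by \eqref{2.4}. Since $p$-convexity of $\Theta$ with $p\ge 2$ gives $\D(\Theta^*)=\X^*$ and the Fréchet differentiability of $\Theta^*$, both ingredients are already available from the excerpt.

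First I would use the Fenchel identity in \eqref{2.3}: from $\xi\in\p\Theta(x)$ we get $\Theta(x) = \l\xi,x\r - \Theta^*(\xi)$, and from $\bar\xi\in\p\Theta(\bar x)$ we get $\Theta(\bar x) = \l\bar\xi,\bar x\r - \Theta^*(\bar\xi)$. Substituting both into the definition of the Bregman distance and cancelling the cross term $\l\xi,x\r$ yields
\begin{equation*}
D_\xi\Theta(\bar x,x) = \Theta^*(\xi) - \Theta^*(\bar\xi) - \l\xi-\bar\xi,\bar x\r.
\end{equation*}
By \eqref{2.5}, $\bar x = \nabla\Theta^*(\bar\xi)$, so the right-hand side is exactly the linearization error of $\Theta^*$ at $\bar\xi$ evaluated at $\xi$.

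Next I would bound this linearization error by a standard descent-type argument for functions with Hölder continuous gradient. Writing
\begin{equation*}
\Theta^*(\xi) - \Theta^*(\bar\xi) - \l\nabla\Theta^*(\bar\xi),\xi-\bar\xi\r = \int_0^1 \l \nabla\Theta^*(\bar\xi+t(\xi-\bar\xi)) - \nabla\Theta^*(\bar\xi),\, \xi-\bar\xi\r\, dt,
\end{equation*}
applying the Cauchy--Schwarz (duality) inequality, and inserting the Hölder estimate \eqref{2.4} with exponent $1/(p-1)$ and constant $(2c_0)^{-1/(p-1)}$, the integral reduces to $\int_0^1 t^{1/(p-1)}\,dt = (p-1)/p = 1/p^*$, giving the bound $\|\xi-\bar\xi\|^{p^*}/(p^*(2c_0)^{p^*-1})$, since $p^*-1 = 1/(p-1)$.

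The main obstacle is essentially bookkeeping: matching the constants $(2c_0)^{p^*-1}$ and $1/p^*$ correctly, and justifying the use of the fundamental theorem of calculus along the segment $[\bar\xi,\xi]$ in $\X^*$ (this is legitimate because $\Theta^*$ is Fréchet differentiable on all of $\X^*$ with continuous gradient, so the scalar function $t\mapsto \Theta^*(\bar\xi + t(\xi-\bar\xi))$ is $C^1$ on $[0,1]$). No additional tools beyond what the preliminaries already record are needed.
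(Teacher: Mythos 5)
Your proposal is correct and follows essentially the same route as the paper's own proof: rewrite the Bregman distance via the Fenchel identity as the linearization error of $\Theta^*$ at $\bar\xi$, express it as an integral along the segment from $\bar\xi$ to $\xi$, and apply the H\"{o}lder continuity estimate \eqref{2.4}. The constant bookkeeping ($\int_0^1 t^{1/(p-1)}\,dt = 1/p^*$ and $p^*-1=1/(p-1)$) checks out exactly as in the paper.
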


\begin{proof}
Applying \eqref{2.3},  $\bar x = \nabla \Theta^*(\bar \xi)$ and \eqref{2.4}, it follows that
\begin{align*}
D_\xi \Theta(\bar x,x)
& = \Theta^*(\xi)-\Theta^*(\bar \xi)-\l  \xi-\bar \xi, \nabla \Theta^*(\bar \xi)\r \displaybreak[0]\\
& = \int_0^1 \l \xi -\bar \xi, \nabla \Theta^* (\bar \xi +t (\xi-\bar \xi)) -\nabla \Theta^*(\bar \xi)\r d t \displaybreak[0]\\
& \le \|\xi-\bar \xi\| \int_0^1  \| \nabla \Theta^* (\bar \xi +t (\xi-\bar \xi)) -\nabla \Theta^*(\bar \xi)\| dt\\
& \le  \frac{1}{p^*(2c_0)^{p*-1}}\|\xi-\bar \xi\|^{p*}
\end{align*}
which shows the result.
\end{proof}

On a Banach space $\X$, we consider for $1<s<\infty$ the convex function $x\to \|x\|^s/s$.
Its subgradient at $x$ is given by
\begin{equation}\label{Jr}
J_s^{\X}(x):=\left\{\xi\in \X^*: \|\xi\|=\|x\|^{s-1} \mbox{ and } \l \xi, x\r=\|x\|^s\right\}
\end{equation}
which gives the duality mapping $J_s^{\X}: \X \to 2^{\X^*}$ of $\X$ with gauge function $t\to t^{s-1}$.
If $\X$ is uniformly smooth in the sense that its modulus of smoothness
\begin{equation*}
\rho_{\X}(t) := \sup\{\|\bar x+ x\|+\|\bar x-x\|- 2 : \|\bar x\| = 1,  \|x\|\le t\}
\end{equation*}
satisfies $\lim_{t\searrow 0} \frac{\rho_{\X}(t)}{t} =0$, then the duality mapping $J_s^{\X}$, for each $1<s<\infty$,
is single valued and uniformly continuous on bounded sets. There are many examples of uniformly smooth Banach spaces,
e.g., sequence space $\ell^s$, Lebesgue space $L^s$, Sobolev space $W^{k,s}$ and Besov space $B^{q,s}$ with $1<s<\infty$.

\section{\bf The two-point gradient method}
\setcounter{equation}{0}

We consider
\begin{equation}\label{sys}
F(x) =y,
\end{equation}
where $F: \D(F)\subset \X\to \Y$ is an operator between two Banach spaces $\X$ and $\Y$. Throughout this paper,
we will always assume that $\X$ is reflexive, $\Y$ is uniformly smooth, and \eqref{sys} has a solutions.
In order to capture the special feature of the sought solution, we will use a general convex function $\Theta: \X \to (-\infty, \infty]$
as a penalty term. We will need a few assumptions concerning $\Theta$ and $F$.

\begin{assumption}\label{A1}
$\Theta:\X\to (-\infty, \infty]$ is proper, weakly lower semi-continuous, and $p$-convex with $p\geq 2$ such that the condition
\eqref{pconvex} is satisfied for some $c_0>0$.
\end{assumption}

\begin{assumption}\label{A2}
\begin{enumerate}[leftmargin = 0.8cm]

\item[(a)] There exists $\rho > 0$, $x_0\in \X$ and $\xi_0\in \p\Theta(x_0)$ such that $B_{3\rho}(x_0)\subset \D(F)$
and (\ref{sys}) has a solution $x^*\in \D(\Theta)$ with
$$
D_{\xi_0} \Theta(x^*, x_0) \le c_0 \rho^p,
$$
where $B_{\rho}(x_0)$ denotes the closed ball around $x_0$ with radius $\rho$.

\item[(b)] The operator $F$ is weakly closed on $\D(F)$.

\item[(c)]  There exists a family of bounded linear operators $\{L(x): \X \to \Y\}_{x\in B_{3\rho}(x_0)}$ such that
$x\to L(x)$ is continuous on $B_{3\rho}(x_0)$ and there is $0\le \eta <1$ such that
\begin{equation*}
\|F(x)-F(\bar x) -L(\bar x) (x-\bar x)\|\le \eta \|F(x) -F(\bar x)\|
\end{equation*}
for all $x, \bar x \in B_{3\rho}(x_0)$. Moreover, there is a constant $C_0>0$ such that
$$
\|L(x) \|_{\X\to \Y} \le C_0, \quad \forall x \in B_{3\rho}(x_0).
$$
\end{enumerate}
\end{assumption}

All the conditions in Assumption 3.2 are standard. The condition (c) is called the tangential condition and is widely used in the
analysis of iterative regularization methods for nonlinear ill-posed inverse problems \cite{hns95}. The weak closedness of $F$ in
condition (b) means that for any sequence $\{x_n\}$ in $\D(F)$ satisfying $x_n\rightharpoonup x$ and $F(x_n)\rightharpoonup v$,
then $x\in \D(F)$ and $F(x) = v$, where we use ``$\rightharpoonup$" to denote the weak convergence.

Using the $p$-convex function $\Theta$ specified in Assumption \ref{A1}, we may pick among solutions of (\ref{sys}) the one
with the desired feature. We define $x^{\dag}$ to be a solution of (\ref{sys}) with the property
\begin{equation}\label{xdag}
 D_{\xi_0} \Theta(x^{\dag},x_0) := \min_{x\in \D(\Theta)\cap \D(F) }\left\{D_{\xi_0} \Theta(x,x_0) : F(x) = y \right\}\,.
\end{equation}
When $\X$ is reflexive, by using the weak closedness of $F$ and the weak lower semi-continuity of $\Theta$, it is standard
to show that $x^\dag$ exists. According to Assumption \ref{A2} (a), we always have
$$
D_{\xi_0} \Theta(x^\dag, x_0) \le c_0 \rho^p
$$
which together with Assumption \ref{A1} implies that $\|x^\dag-x_0\| \le \rho$. The following lemma shows that $x^\dag$ is
uniquely defined.

\begin{lemma} \label{lem0}
Under Assumption \ref{A1} and Assumption \ref{A2}, the solution $x^\dag$ of (\ref{sys}) satisfying (\ref{xdag}) is uniquely
defined.
\end{lemma}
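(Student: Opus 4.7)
The plan is a standard convex-combination argument exploiting the tangential condition in Assumption \ref{A2}(c) to stay inside the solution set, combined with the strict convexity coming from $p$-convexity of $\Theta$.

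First I would suppose, toward a contradiction, that there are two distinct minimizers $x_1, x_2 \in \D(\Theta)\cap \D(F)$ of the problem \eqref{xdag}, both satisfying $F(x_i) = y$ and $D_{\xi_0}\Theta(x_i,x_0) = m$, where $m$ is the minimum value. As already observed in the paragraph preceding the lemma, the bound $D_{\xi_0}\Theta(x_i,x_0) \le c_0\rho^p$ combined with \eqref{pconvex} gives $\|x_i - x_0\|\le \rho$, so both points lie in $B_\rho(x_0)$, and hence every convex combination $x_\lambda := \lambda x_1 + (1-\lambda)x_2$ with $\lambda\in[0,1]$ lies in $B_\rho(x_0)\subset B_{3\rho}(x_0)$.

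Next I would apply the tangential condition in Assumption \ref{A2}(c) twice. Applying it at $\bar x = x_2$, $x = x_1$ and using $F(x_1)=F(x_2)=y$ yields $L(x_2)(x_1-x_2) = 0$. Applying it at $\bar x = x_2$, $x = x_\lambda$ gives
\begin{equation*}
\|F(x_\lambda) - y - \lambda L(x_2)(x_1-x_2)\| \le \eta \|F(x_\lambda) - y\|,
\end{equation*}
and since the middle linear term vanishes, $(1-\eta)\|F(x_\lambda)-y\| \le 0$, so $F(x_\lambda) = y$ for every $\lambda\in[0,1]$. Thus the whole segment consists of solutions of \eqref{sys} lying in $\D(\Theta)\cap \D(F)$, and therefore each $x_\lambda$ is admissible in \eqref{xdag}.

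Minimality then forces $D_{\xi_0}\Theta(x_\lambda,x_0) \ge m$. On the other hand convexity of $\Theta$ yields
\begin{equation*}
D_{\xi_0}\Theta(x_\lambda,x_0) \le \lambda D_{\xi_0}\Theta(x_1,x_0) + (1-\lambda)D_{\xi_0}\Theta(x_2,x_0) = m,
\end{equation*}
so equality holds throughout. Feeding this equality back into the uniform convexity inequality \eqref{2.2} with $h(t)=c_0 t^p$ (which, upon subtracting $\Theta(x_0)+\langle \xi_0,x_\lambda-x_0\rangle$ from both sides, transfers verbatim to Bregman distances) gives $\lambda(1-\lambda)c_0\|x_1-x_2\|^p \le 0$, whence $x_1 = x_2$, a contradiction. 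The only non-routine step is recognizing that the tangential condition, though designed for a linearization-type estimate, is precisely what is needed to promote the convex combination of two exact solutions back into the solution set, despite $F$ being nonlinear; after that, the proof is just strict convexity.
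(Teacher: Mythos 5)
Your argument is correct: the tangential cone condition in Assumption \ref{A2}(c) indeed shows that the segment joining two minimizers consists of solutions, and the $p$-convexity of $\Theta$ (equivalently, the strict convexity of $x\mapsto D_{\xi_0}\Theta(x,x_0)$, which differs from $\Theta$ only by an affine term) then forces the two minimizers to coincide; the only points worth making explicit are that $x_\lambda\in\D(\Theta)$ because $\D(\Theta)$ is convex, and that you invoke the form \eqref{2.2} of $p$-convexity rather than \eqref{pconvex}, which the paper asserts to be equivalent. The paper itself gives no argument here but simply cites \cite[Lemma 3.2]{jw13}, and the proof there is essentially the one you reconstructed, so your proposal supplies a self-contained version of the same approach.
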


\begin{proof}
This is essentially proved in \cite[Lemma 3.2]{jw13}.
\end{proof}

In order to construct an approximate solution to (\ref{sys}), we will formulate a two-point gradient method with
penalty term induced by the uniformly convex function $\Theta$. Let $\tau>1$ be a given number. By picking
$x_{-1}^\d = x_0^\d:=x_0\in\X$ and $\xi_{-1}^\d=\xi_0^\d:=\xi_0\in \partial\Theta(x_0)$ as initial guess, for $n\ge 0$ we define
\begin{equation} \label{TPGM}
\begin{aligned}
\zeta_n^\d &= \xi_{n}^\d +\lambda_n^\d (\xi_n^\d-\xi_{n-1}^\d),\\
z_n^\d &= \nabla \Theta^*(\zeta_n^\d),\\
\xi_{n+1}^\d &=\zeta_n^\d-\mu_{n}^\d L(z_n^\d)^* J_s^{\Y}(r_n^\d),\\
x_{n+1}^\d &= \nabla \Theta^*(\xi_{n+1}^\d),
\end{aligned}
\end{equation}
where $r_n^\d = F(z_n^\d)-y^\d$, $\lambda_n^\d \ge 0$ is the combination parameter, $\mu_n^\d$ is the step sizes defined by
\begin{align}\label{step}
\mu_n^\d= \left\{\begin{array}{lll}
\displaystyle{\min\left\{\frac{\bar \mu_0 \|r_n^\d\|^{p(s-1)}}{\|L(z_n^\d)^*J_s^{\Y}(r_n^\d)\|^p}, \bar \mu_1\right\}\|r_n^\d\|^{p-s}}
 & \mbox{ if } \|r_n^\d\|>\tau \d\\[2ex]
0 & \mbox{ if } \|r_n^\d\| \le \tau \d
\end{array}\right.
\end{align}
for some positive constants $\bar \mu_0$ and $\bar \mu_1$, and the mapping $J_s^{\Y}: \Y\rightarrow\Y^*$ with $1<s<\infty$ denotes the
duality mapping of $\Y$ with gauge function $t\rightarrow t^{s-1}$, which is single-valued and continuous because
$\Y$ is assumed to be uniformly smooth. We remark that when $\lambda_n^\d =0$ for all $n$, the method (\ref{TPGM}) reduces to the
Landweber iteration considered in \cite{jw13} where a detailed convergence analysis has been carried out. When $\lambda_n^\d = n/(n+\a)$
with $\a \ge 3$ for all $n$, the method (\ref{TPGM}) becomes a refined version of the Nesterov acceleration of Landweber iteration
proposed in \cite{j16}; although there is no convergence theory available, numerical simulations in \cite{j16} demonstrate its
usefulness and acceleration effect. In this paper we will consider (\ref{TPGM}) with $\lambda_n^\d$ satisfying suitable conditions
to be specified later. Note that our method (\ref{TPGM}) requires the use of the previous two iterations at every iteration step,
which follows the spirit from \cite{hr17}; on the other hand, our method allows the use of a general $p$-convex penalty
function $\Theta$, which could be non-smooth, to reconstruct solutions with special features such as sparsity and discontinuities.

\subsection{Convergence}

In order to use our TPG-$\Theta$ method (\ref{TPGM}) to produce a useful approximate solution to (\ref{sys}),
the iteration must be terminated properly. We will use the discrepancy principle with respect to $z_n^\d$,
i.e., for a given $\tau>1$, we will terminate the iteration after $n_\d$ steps, where $n_\d := n(\d,y^\d)$
is the integer such that
\begin{align}\label{dp}
\|F(z_{n_\d}^\d)-y^\d\| \le \tau\d < \|F(z_n^\d)-y^\d\|\,,\quad 0\le n<n_\d,
\end{align}
and use $x_{n_\d}^\d$ as an approximated solution. To carry out the convergence analysis of $x_{n_\d}^\d$ to
$x^\dag$ as $\d\rightarrow 0$, we are going to show that, for any solution $\hat x$ of \eqref{sys} in
$B_{2\rho}(x_0)\cap \D(\Theta)$, the Bregman distance $D_{\xi_n^\d}\Theta(\hat x, x_n^\d)$, $0\le n \le n_\d$, is
monotonically decreasing with respect to $n$. To this end, we introduce
\begin{align}\label{decrease0}
\triangle_n: = D_{\xi_n^\d}\Theta(\hat x, x_n^\d) - D_{\xi_{n-1}^\d}\Theta(\hat x, x_{n-1}^\d).
\end{align}
We will show that, under suitable choice of $\{\lambda_n^\d\}$, there holds $\Delta_n\le 0$ for $0\le n \le n_\d$.

\begin{lemma}\label{lem1}
Let $\X$ be reflexive, let $\Y$ be uniformly smooth, and let Assumption \ref{A1} and Assumption \ref{A2} hold.
Then, for any solution $\hat x$ of \eqref{sys} in  $B_{2\rho}(x_0)\bigcap \D(\Theta)$, there holds
\begin{align}
& D_{\zeta_n^\d}\Theta(\hat x, z_n^\d) -D_{\xi_{n}^\d}\Theta(\hat x, x_{n}^\d) \nonumber \\
& \qquad \qquad \le \lambda_n^\d\triangle_n+\frac{1}{p^*(2c_0)^{p^*-1}} \left(\lambda_n^\d+\left(\lambda_n^\d\right)^{p^*}\right)
\|\xi_{n}^\d-\xi_{n-1}^\d\|^{p^*}.\label{basic1}
\end{align}
If $z_n^\d\in B_{3\rho}(x_0)$ then
\begin{align}
D_{\xi_{n+1}^\d}\Theta(\hat x,x_{n+1}^\d) - D_{\zeta_n^\d}\Theta(\hat x,z_n^\d)
&\le -\left(1-\eta -\frac{1}{p^*} \left(\frac{\bar \mu_0}{2 c_0}\right)^{p^*-1}\right) \mu_n^\d \|F(z_n^\d)-y^\d\|^s\nonumber \\
& \quad \,  +(1+\eta)\mu_n^\d \|F(z_n^\d)-y^\d\|^{s-1} \d.\label{basic2}
\end{align}
\end{lemma}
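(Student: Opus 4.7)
The proof rests on the three-point identity \eqref{2.1} together with the quantitative bound from Lemma \ref{lem:2.1}. The two estimates are obtained by choosing different reference points in \eqref{2.1}, but the underlying mechanism is the same: rewrite the difference of Bregman distances as an inner product plus a ``small'' Bregman term, then control the inner product using the structure of the iteration (extrapolation step for \eqref{basic1}, tangential condition for \eqref{basic2}) and dominate the small Bregman term via Lemma \ref{lem:2.1}.

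\textbf{Proof of \eqref{basic1}.} First I apply \eqref{2.1} with the triple $(x_n^\d,\xi_n^\d)$, $(z_n^\d,\zeta_n^\d)$ and test point $\hat x$ to obtain
\[
D_{\zeta_n^\d}\Theta(\hat x,z_n^\d)-D_{\xi_n^\d}\Theta(\hat x,x_n^\d)
= D_{\zeta_n^\d}\Theta(x_n^\d,z_n^\d)+\langle \zeta_n^\d-\xi_n^\d,\,x_n^\d-\hat x\rangle.
\]
By the definition of $\zeta_n^\d$, the inner product equals $\lambda_n^\d \langle \xi_n^\d-\xi_{n-1}^\d,\, x_n^\d-\hat x\rangle$. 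Next I apply \eqref{2.1} again with $(x_{n-1}^\d,\xi_{n-1}^\d)$, $(x_n^\d,\xi_n^\d)$ and test point $\hat x$ to rewrite $\langle \xi_n^\d-\xi_{n-1}^\d,\,x_{n-1}^\d-\hat x\rangle = \triangle_n - D_{\xi_n^\d}\Theta(x_{n-1}^\d,x_n^\d)$, and then add $\langle \xi_n^\d-\xi_{n-1}^\d,\,x_n^\d-x_{n-1}^\d\rangle$ to both sides. The symmetric Bregman identity gives
\[
\langle \xi_n^\d-\xi_{n-1}^\d,\,x_n^\d-x_{n-1}^\d\rangle
=D_{\xi_n^\d}\Theta(x_{n-1}^\d,x_n^\d)+D_{\xi_{n-1}^\d}\Theta(x_n^\d,x_{n-1}^\d),
\]
so the two $D_{\xi_n^\d}\Theta(x_{n-1}^\d,x_n^\d)$ contributions cancel and I am left with
\[
\lambda_n^\d\langle \xi_n^\d-\xi_{n-1}^\d,\,x_n^\d-\hat x\rangle
=\lambda_n^\d\triangle_n+\lambda_n^\d D_{\xi_{n-1}^\d}\Theta(x_n^\d,x_{n-1}^\d).
\]
Finally I invoke Lemma \ref{lem:2.1} twice: once to bound $D_{\zeta_n^\d}\Theta(x_n^\d,z_n^\d)\le \tfrac{1}{p^*(2c_0)^{p^*-1}}\|\zeta_n^\d-\xi_n^\d\|^{p^*} = \tfrac{(\lambda_n^\d)^{p^*}}{p^*(2c_0)^{p^*-1}}\|\xi_n^\d-\xi_{n-1}^\d\|^{p^*}$, and once to bound $D_{\xi_{n-1}^\d}\Theta(x_n^\d,x_{n-1}^\d)\le \tfrac{1}{p^*(2c_0)^{p^*-1}}\|\xi_n^\d-\xi_{n-1}^\d\|^{p^*}$. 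Assembling these gives \eqref{basic1}.

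\textbf{Proof of \eqref{basic2}.} I apply \eqref{2.1} with $(z_n^\d,\zeta_n^\d)$, $(x_{n+1}^\d,\xi_{n+1}^\d)$ and test point $\hat x$ to get
\[
D_{\xi_{n+1}^\d}\Theta(\hat x,x_{n+1}^\d)-D_{\zeta_n^\d}\Theta(\hat x,z_n^\d)
=D_{\xi_{n+1}^\d}\Theta(z_n^\d,x_{n+1}^\d)+\langle \xi_{n+1}^\d-\zeta_n^\d,\,z_n^\d-\hat x\rangle.
\]
Using $\xi_{n+1}^\d-\zeta_n^\d=-\mu_n^\d L(z_n^\d)^* J_s^{\Y}(r_n^\d)$, moving $L(z_n^\d)^*$ to the other factor, and invoking the tangential condition at the points $z_n^\d,\hat x\in B_{3\rho}(x_0)$ together with $F(\hat x)=y$ and $\|y-y^\d\|\le\d$, I obtain
\[
\langle J_s^{\Y}(r_n^\d),\,L(z_n^\d)(z_n^\d-\hat x)\rangle
\ge (1-\eta)\|r_n^\d\|^s-(1+\eta)\|r_n^\d\|^{s-1}\d,
\]
after using $\|J_s^{\Y}(r_n^\d)\|=\|r_n^\d\|^{s-1}$. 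For the remaining Bregman term I first apply Lemma \ref{lem:2.1} to get $D_{\xi_{n+1}^\d}\Theta(z_n^\d,x_{n+1}^\d)\le \tfrac{(\mu_n^\d)^{p^*}}{p^*(2c_0)^{p^*-1}}\|L(z_n^\d)^* J_s^{\Y}(r_n^\d)\|^{p^*}$. The crucial algebraic observation is that the step size \eqref{step} is tailored to make $(\mu_n^\d)^{p^*}\|L(z_n^\d)^* J_s^{\Y}(r_n^\d)\|^{p^*}\le \bar\mu_0^{p^*-1}\mu_n^\d\|r_n^\d\|^s$, which I verify by writing the left-hand side as $\mu_n^\d\bigl(\mu_n^\d\|L(z_n^\d)^*J_s^{\Y}(r_n^\d)\|^p\bigr)^{p^*-1}$ and plugging in $\mu_n^\d\le \bar\mu_0\|r_n^\d\|^{p(s-1)}/\|L(z_n^\d)^*J_s^{\Y}(r_n^\d)\|^p\cdot\|r_n^\d\|^{p-s}$, since $(p-1)(p^*-1)=1$. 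Combining these two estimates yields \eqref{basic2}.

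\textbf{Main obstacle.} Conceptually the proof is routine, but two calculations demand care. In \eqref{basic1} one must notice the double application of \eqref{2.1} that produces two copies of $D_{\xi_n^\d}\Theta(x_{n-1}^\d,x_n^\d)$ with opposite signs; without this cancellation the right-hand side would contain a term that cannot be controlled by $\|\xi_n^\d-\xi_{n-1}^\d\|^{p^*}$ alone. In \eqref{basic2} the technical heart is the exponent bookkeeping $(p-1)(p^*-1)=1$ combined with the specific form of $\mu_n^\d$, which is precisely what produces the coefficient $\tfrac{1}{p^*}(\bar\mu_0/(2c_0))^{p^*-1}$ and allows the ``quadratic'' Bregman term to be absorbed into the descent term $\mu_n^\d\|r_n^\d\|^s$.
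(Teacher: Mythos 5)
Your proof is correct and follows essentially the same route as the paper: both estimates come from the three-point identity \eqref{2.1} combined with Lemma \ref{lem:2.1}, the extrapolation structure of $\zeta_n^\d$ for \eqref{basic1}, and the tangential condition plus the step-size bound $\mu_n^\d\|L(z_n^\d)^*J_s^{\Y}(r_n^\d)\|^p\le\bar\mu_0\|r_n^\d\|^{s(p-1)}$ for \eqref{basic2}. The only cosmetic difference is that the paper obtains the identity $\l\xi_n^\d-\xi_{n-1}^\d,x_n^\d-\hat x\r=\triangle_n+D_{\xi_{n-1}^\d}\Theta(x_n^\d,x_{n-1}^\d)$ in a single application of \eqref{2.1} (with the roles of $x_{n-1}^\d$ and $x_n^\d$ swapped), whereas you reach the same identity via the symmetric Bregman identity and a cancellation.
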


\begin{proof}
By using the identity \eqref{2.1}, Lemma \ref{lem:2.1} and the definition of $\zeta_n^\d$, we can obtain
\begin{align*}
D_{\zeta_n^\d}\Theta(\hat x, z_n^\d) -D_{\xi_{n}^\d}\Theta(\hat x, x_{n}^\d)
&= D_{\zeta_n^\d}\Theta(x_n^\d, z_n^\d)+\l \zeta_n^\d- \xi_{n}^\d,x_n^\d- \hat{x}\r\\
&\le \frac{1}{p^*(2c_0)^{p^*-1}}\|\zeta_n^\d-\xi_n^\d\|^{p^*}+ \l \zeta_n^\d- \xi_{n}^\d,x_n^\d- \hat{x}\r\\
&= \frac{1}{p^*(2c_0)^{p^*-1}}(\lambda_n^\d)^{p^*}\|\xi_{n-1}^\d-\xi_n^\d\|^{p^*}+ \l \zeta_n^\d- \xi_{n}^\d,x_n^\d- \hat{x}\r\,.
\end{align*}
By using again the definition of $\zeta_n^\d$, \eqref{2.1} and Lemma \ref{lem:2.1}, and referring to
the definition of $\Delta_n$ and $\lambda_n^\d\ge 0$, we have
\begin{align*}
\l \zeta_n^\d- \xi_{n}^\d, x_n^\d- \hat{x}\r
&=\lambda_n^\d\l \xi_{n}^\d- \xi_{n-1}^\d,x_n^\d- \hat{x}\r\\
%& = -\lambda_n^\d\left(D_{\xi_{n-1}^\d}\Theta(\hat x,x_{n-1}^\d) - D_{\xi_{n}^\d}\Theta(\hat x,x_{n}^\d)-D_{\xi_{n-1}^\d}\Theta(x_n^\d,x_{n-1}^\d)\right)\\
&=\lambda_n^\d
\left(D_{\xi_{n}^\d}\Theta(\hat x, x_{n}^\d)-D_{\xi_{n-1}^\d}\Theta(\hat x, x_{n-1}^\d)+D_{\xi_{n-1}^\d}\Theta(x_{n}^\d,x_{n-1}^\d)\right)\\
&\le \lambda_n^\d\triangle_n + \frac{1}{p^*(2c_0)^{p^*-1}}\lambda_n^\d \|\xi_{n}^\d-\xi_{n-1}^\d\|^{p^*}\,.
\end{align*}
The combination of the above two estimates yields \eqref{basic1}.

To derive \eqref{basic2}, we first use the identity \eqref{2.1} and Lemma \ref{lem:2.1} to obtain
\begin{align}\label{prepare}
& D_{\xi_{n+1}^\d}\Theta(\hat x,x_{n+1}^\d) - D_{\zeta_n^\d}\Theta(\hat x,z_n^\d) \nonumber \\
&\qquad = D_{\xi_{n+1}^\d}\Theta(z_n^\d,x_{n+1}^\d)+\l \xi_{n+1}^\d-\zeta_n^\d,z_n^\d-\hat x\r\nonumber\\
&\qquad \le \frac{1}{p^*(2c_0)^{p^*-1}}\|\xi_{n+1}^\d-\zeta_n^\d\|^{p^*} + \l \xi_{n+1}^\d-\zeta_n^\d,z_n^\d-\hat x\r.
\end{align}
Recall the definition of $\xi_{n+1}^\d$ in \eqref{TPGM}, we have
\begin{align*}
\|\xi_{n+1}^\d-\zeta_n^\d\|^{p^*}
&= (\mu_n^\d)^{p^*}\|L(z_n^\d)^*J_s^{\mathcal{Y}}(F(z_n^\d)-y^\d)\|^{p^*}.
\end{align*}
According to the definition (\ref{step}) of $\mu_n^\d$, one can see that
$$
\mu_n^\d \le \frac{\bar \mu_0 \|F(z_n^\d)-y^\d\|^{s(p-1)}}{\|L(z_n^\d)^* J_s^{\Y} (F(z_n^\d)-y^\d)\|^p},
$$
which implies that
\begin{align*}
(\mu_n^\d)^{p^*-1}\|L(z_n^\d)^*J_s^{\mathcal{Y}}(F(z_n^\d)-y^\d)\|^{p^*}
& \le \bar \mu_0^{p^*-1} \|F(z_n^\d)-y^\d\|^{s(p-1)(p^*-1)} \\
& = \bar \mu_0^{p^*-1} \|F(z_n^\d)-y^\d\|^s.
\end{align*}
Therefore, the first term on the right hand side of \eqref{prepare} can be estimated as
\begin{align}\label{11.3.1}
\frac{1}{p^*(2c_0)^{p^*-1}}\|\xi_{n+1}^\d-\zeta_n^\d\|^{p^*}
\le \frac{1}{p^*} \left(\frac{\bar \mu_0}{2c_0}\right)^{p^*-1} \mu_n^\d \|F(z_n^\d)-y^\d\|^s.
\end{align}
For the second term  on the right hand side of \eqref{prepare}, we may use the definition of $\xi_{n+1}^\d$,
the property of $J_s^{\Y}$, and the definition of $\mu_n^\d$ to derive that
\begin{align*}
\l \xi_{n+1}^\d-\zeta_n^\d,z_n^\d-\hat x\r
& = -\mu_n^\d\l J_s^{\mathcal{Y}}(F(z_n^\d)-y^\d),L(z_n^\d)(z_n^\d-\hat x)\r\\
& = -\mu_n^\d\l J_s^{\mathcal{Y}}(F(z_n^\d)-y^\d), y^\d-F(z_n^\d)-L(z_n^\d)(\hat x-z_n^\d)\r \\
& \quad \, - \mu_n^\d\|F(z_n^\d)-y^\d\|^s\\
& \le \mu_n^\d\|F(z_n^\d)-y^\d\|^{s-1}\left(\delta+\|y-F(z_n^\d)-L(z_n^\d) (\hat x-z_n^\d)\|\right) \\
& \quad \, -\mu_n^\d\|F(z_n^\d)-y^\d\|^s.
\end{align*}
Recall that $z_n^\d\in B_{3\rho}(x_0)$, we may use Assumption \ref{A2} (c) to further obtain
\begin{align}\label{11.3.2}
& \l \xi_{n+1}^\d-\zeta_n^\d,z_n^\d-\hat x\r \nonumber \\
& \le \mu_n^\d \|F(z_n^\d)-y^\d\|^{s-1}\left(\d + \eta\|F(z_n^\d)-y\|\right) -\mu_n^\d \|F(z_n^\d)-y^\d\|^s \nonumber \\
& \le (1+\eta) \mu_n^\d \|F(z_n^\d)-y^\d\|^{s-1} \d - (1-\eta) \mu_n^\d \|F(z_n^\d)-y^\d\|^s.
\end{align}
The combination of above two estimates (\ref{11.3.1}) and (\ref{11.3.2}) with (\ref{prepare}) yields \eqref{basic2}.
\end{proof}

\begin{lemma}\label{lem2}
Let $\X$ be reflexive, let $\Y$ be uniformly smooth, and let Assumption \ref{A1} and Assumption \ref{A2} hold.
Assume that $\tau>1$ and $\bar \mu_0>0$ are chosen such that
\begin{align}\label{c1}
c_1:=1-\eta-\frac{1+\eta}{\tau} -\frac{1}{p^*} \left(\frac{\bar\mu_0}{2 c_0}\right)^{p^*-1}>0.
\end{align}
If $z_n^\d\in B_{3\rho}(x_0)$, then, for any solution $\hat x$ of \eqref{sys} in $B_{2\rho}(x_0)\bigcap \D(\Theta)$, there holds
\begin{align}\label{decrease}
\triangle _{n+1} & \leq\lambda_n^\d\triangle_n+\frac{1}{p^*(2c_0)^{p^*-1}}\left(\lambda_n^\d
+(\lambda_n^\d)^{p^*}\right) \|\xi_{n}^\d-\xi_{n-1}^\d\|^{p^*} \nonumber \\
& \quad \, -c_1 \mu_n^\d \|F(z_n^\d)-y^\d\|^s,
\end{align}
where $\Delta_n$ is defined by (\ref{decrease0}).
\end{lemma}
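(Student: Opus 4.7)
The plan is to obtain (\ref{decrease}) by decomposing and adding the two telescoping estimates already proved in Lemma~\ref{lem1}. First I would write
\begin{align*}
\triangle_{n+1} = \bigl(D_{\xi_{n+1}^\d}\Theta(\hat x,x_{n+1}^\d) - D_{\zeta_n^\d}\Theta(\hat x,z_n^\d)\bigr) + \bigl(D_{\zeta_n^\d}\Theta(\hat x,z_n^\d) - D_{\xi_n^\d}\Theta(\hat x,x_n^\d)\bigr)
\end{align*}
and insert the upper bounds (\ref{basic1}) and (\ref{basic2}); the hypothesis $z_n^\d\in B_{3\rho}(x_0)$ is what authorizes the application of (\ref{basic2}). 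This yields every term on the right-hand side of (\ref{decrease}) except that the contribution from (\ref{basic2}) appears as
\begin{align*}
-\Bigl(1-\eta-\tfrac{1}{p^*}\bigl(\tfrac{\bar \mu_0}{2c_0}\bigr)^{p^*-1}\Bigr)\mu_n^\d\|F(z_n^\d)-y^\d\|^s + (1+\eta)\mu_n^\d\|F(z_n^\d)-y^\d\|^{s-1}\d
\end{align*}
rather than the single term $-c_1\mu_n^\d\|F(z_n^\d)-y^\d\|^s$.

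The remaining task is to absorb the noise-dependent term into the gain term by exploiting the discrepancy built into the step-size rule (\ref{step}). When $\mu_n^\d>0$, definition (\ref{step}) is activated only when $\|F(z_n^\d)-y^\d\|>\tau\d$, so that $\d<\|F(z_n^\d)-y^\d\|/\tau$ and hence
\begin{align*}
(1+\eta)\mu_n^\d\|F(z_n^\d)-y^\d\|^{s-1}\d \le \tfrac{1+\eta}{\tau}\,\mu_n^\d\|F(z_n^\d)-y^\d\|^s.
\end{align*}
Combining with the negative term produces precisely the constant $c_1$ from (\ref{c1}). In the complementary case $\mu_n^\d=0$, both sides of (\ref{basic2}) vanish (the inner update is trivial, so $\xi_{n+1}^\d=\zeta_n^\d$ and $x_{n+1}^\d=z_n^\d$), and the term $-c_1\mu_n^\d\|F(z_n^\d)-y^\d\|^s$ is zero, so (\ref{decrease}) reduces to (\ref{basic1}).

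No genuinely hard step is involved: the statement is a clean corollary of Lemma~\ref{lem1}. The only point requiring minor care is matching the algebra of the two step-dependent terms exactly to the quantity $c_1$ defined in (\ref{c1}). Note that the positivity $c_1>0$ is \emph{not} used in the derivation of (\ref{decrease}) itself; it will be invoked later, when the inequality is iterated, to turn the step-dependent term into an actual descent contribution.
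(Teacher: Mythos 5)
Your proposal is correct and follows essentially the same route as the paper: decompose $\triangle_{n+1}$ telescopically through $D_{\zeta_n^\d}\Theta(\hat x, z_n^\d)$, apply (\ref{basic1}) and (\ref{basic2}), and absorb the noise term via $\mu_n^\d\,\d \le \mu_n^\d\,\|F(z_n^\d)-y^\d\|/\tau$, which is exactly the paper's argument (the paper states this last inequality in a form that covers the case $\mu_n^\d=0$ automatically, rather than splitting into cases as you do, but the content is identical).
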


\begin{proof}
By using the definition of $\mu_n^\d$ it is easily seen that $\mu_n^\d \d \le \mu_n^\d \|F(z_n^\d)-y^\d\|/\tau$.
It then follows from \eqref{basic2} that
\begin{align*}
D_{\xi_{n+1}^\d} \Theta(\hat x, x_{n+1}^\d) - D_{\zeta_n^\d}\Theta(\hat x, z_n^\d) \le -c_1 \mu_n^\d \|F(z_n^\d)-y^\d\|^s.
\end{align*}
Combining this estimate with \eqref{basic1} yields (\ref{decrease}).
\end{proof}

We will use Lemma \ref{lem1} and Lemma \ref{lem2} to show that $z_n^\d \in B_{3\rho}(x_0)$ and $\Delta_n \le 0$
for all $n\ge 0$ and that the integer $n_\d$ determined by (\ref{dp}) is finite. To this end, we need to place
conditions on $\{\lambda_n^\d\}$. We assume that $\{\lambda_n^\d\}$ is chosen such that
\begin{align}\label{add ass1}
\frac{1}{p^*(2c_0)^{p^*-1}} \left((\lambda_n^\d)^{p^*}+\lambda_n^\d\right)\|\xi_{n}^\d-\xi_{n-1}^\d\|^{p^*}\le c_0\rho^p
\end{align}
and
\begin{align}\label{condition}
\frac{1}{p^*(2c_0)^{p^*-1}}\left((\lambda_n^\d)^{p^*}+\lambda_n^\d\right) \|\xi_{n}^\d-\xi_{n-1}^\d\|^{p^*}
-\frac{c_1}{\nu} \mu_n^\d \|F(z_n^\d)-y^\d\|^s\le 0
\end{align}
for all $n\ge 0$, where $\nu>1$ is a constant independent of $\d$ and $n$. We will discuss how to choose $\{\lambda_n^\d\}$
to satisfy (\ref{add ass1}) and (\ref{condition}) shortly.

\begin{proposition}\label{monotonicity}
Let $\X$ be reflexive, let $\Y$ be uniformly smooth, and let Assumption \ref{A1} and Assumption \ref{A2} hold.
Let $\tau>1$ and $\bar \mu_0>0$ be chosen such that (\ref{c1}) holds.  If $\{\lambda_n^\d\}$ is chosen such that (\ref{add ass1}) and (\ref{condition}) hold, then
\begin{equation}\label{10.11.1}
z_n^\d \in B_{3\rho}(x_0) \quad \mbox{ and } \quad x_n^\d \in B_{2\rho}(x_0) \quad \mbox{for } n\ge 0.
\end{equation}
Moreover, for any solution $\hat{x}$ of \eqref{sys} in $B_{2\rho}(x_0)\bigcap \D(\Theta)$ there hold
\begin{align}\label{10.11.2}
D_{\xi_n^\d}\Theta(\hat x, x_n^\d)\leq D_{\xi_{n-1}^\d}\Theta(\hat x, x_{n-1}^\d)
\end{align}
and
\begin{align}\label{sum}
\sum_{m=0}^{n}\mu_m^\d \|F(z_m^\d)-y^\d\|^s \le \frac{\nu }{\left(\nu-1\right)c_1} D_{\xi_0}\Theta(\hat x, x_0)
\end{align}
for all $n\ge 0$.  Let $n_\d$ be chosen by the discrepancy principle \eqref{dp}, then $n_\d$ must be a finite integer.
\end{proposition}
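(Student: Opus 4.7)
\medskip

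\noindent\textbf{Proof plan.} The natural strategy is induction on $n$. First I would specialize to $\hat x=x^\dag$, which lies in $B_\rho(x_0)$ by the estimate noted just after \eqref{xdag}, and prove simultaneously that $z_n^\d\in B_{3\rho}(x_0)$, $x_n^\d\in B_{2\rho}(x_0)$, and $\Delta_n\le 0$ for every $n\ge 0$. The base case $n=0$ is immediate: the initialization $\xi_{-1}^\d=\xi_0^\d=\xi_0$ and $x_{-1}^\d=x_0^\d=x_0$ forces $\zeta_0^\d=\xi_0$, $z_0^\d=x_0$, and $\Delta_0=0$.

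For the inductive step, assume all three properties through index $n$. Since $z_n^\d\in B_{3\rho}(x_0)$, Lemma \ref{lem2} applies with $\hat x=x^\dag$. The term $\lambda_n^\d\Delta_n$ is nonpositive (as $\lambda_n^\d\ge 0$ and $\Delta_n\le 0$), and combining the extrapolation and discrepancy terms via \eqref{condition} yields
\[
\Delta_{n+1}\le -\frac{(\nu-1)c_1}{\nu}\,\mu_n^\d\,\|F(z_n^\d)-y^\d\|^s\le 0.
\]
Consequently $D_{\xi_{n+1}^\d}\Theta(x^\dag,x_{n+1}^\d)\le D_{\xi_0}\Theta(x^\dag,x_0)\le c_0\rho^p$, so \eqref{pconvex} gives $\|x_{n+1}^\d-x^\dag\|\le\rho$ and hence $x_{n+1}^\d\in B_{2\rho}(x_0)$. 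To push the $B_{3\rho}$ bound to index $n+1$, I would feed $\Delta_{n+1}\le 0$ together with condition \eqref{add ass1} into Lemma \ref{lem1} at step $n+1$, obtaining
\[
D_{\zeta_{n+1}^\d}\Theta(x^\dag,z_{n+1}^\d)\le D_{\xi_{n+1}^\d}\Theta(x^\dag,x_{n+1}^\d)+c_0\rho^p\le 2c_0\rho^p,
\]
from which \eqref{pconvex} yields $\|z_{n+1}^\d-x^\dag\|\le 2^{1/p}\rho\le\sqrt{2}\,\rho$ and therefore $\|z_{n+1}^\d-x_0\|\le(1+\sqrt{2})\rho<3\rho$.

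Once this induction closes, $z_n^\d\in B_{3\rho}(x_0)$ holds for all $n$, and the same argument run for an arbitrary solution $\hat x\in B_{2\rho}(x_0)\cap\D(\Theta)$ (with base case $\Delta_0=0$) gives both \eqref{10.11.2} and the per-step decrement $\Delta_{n+1}\le -\frac{(\nu-1)c_1}{\nu}\mu_n^\d\|F(z_n^\d)-y^\d\|^s$. Telescoping this decrement from $0$ to $n$ and using $D_{\xi_{n+1}^\d}\Theta(\hat x,x_{n+1}^\d)\ge 0$ produces the summation bound \eqref{sum}. For the finiteness of $n_\d$, I would argue by contradiction: if $n_\d=\infty$ then $\|F(z_n^\d)-y^\d\|>\tau\d$ for every $n$, and combining $\|L(z_n^\d)\|\le C_0$ from Assumption \ref{A2}(c) with \eqref{step} gives $\mu_n^\d\|F(z_n^\d)-y^\d\|^s\ge\min\{\bar\mu_0/C_0^p,\bar\mu_1\}(\tau\d)^p$, so the left side of \eqref{sum} grows at least linearly in $n$, a contradiction when $\d>0$.

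The step I expect to be the main obstacle is the inner coordination within the induction: closing the $B_{3\rho}$ bound for $z_{n+1}^\d$ requires $\Delta_{n+1}\le 0$ and not only $\Delta_n\le 0$, so Lemma \ref{lem2} must be invoked \emph{before} Lemma \ref{lem1} inside the same inductive step, and condition \eqref{add ass1} is precisely calibrated to absorb the extrapolation term of Lemma \ref{lem1} against the budget $c_0\rho^p$ inherited from Assumption \ref{A2}(a).
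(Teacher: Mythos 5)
Your proposal is correct and follows essentially the same route as the paper's proof: the same induction with the same ordering of the two lemmas (Lemma \ref{lem2} first to get $\Delta_{n+1}\le 0$ via \eqref{condition}, then Lemma \ref{lem1} with \eqref{add ass1} to place $z_{n+1}^\d$ in $B_{3\rho}(x_0)$), the same specialization to $\hat x=x^\dag$ for the ball containment, the same telescoping for \eqref{sum}, and the same contradiction argument for the finiteness of $n_\d$. The only cosmetic difference is that you run the induction for $x^\dag$ first and then rerun the monotonicity for general $\hat x$, whereas the paper carries general $\hat x$ through the induction and specializes to $x^\dag$ only where needed; both are equivalent.
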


\begin{proof}
We will show (\ref{10.11.1}) and (\ref{10.11.2}) by induction. Since $x_{-1}^\d = x_0^\d = x_0$, $\xi_{-1}^\d = \xi_0^\d = \xi_0$,
and $z_0^\d = \nabla \Theta^*(\zeta_0^\d) = \nabla \Theta^*(\xi_0) = x_0$, they are trivial for $n=0$. Now we assume that
(\ref{10.11.1}) and (\ref{10.11.2}) hold for all $0\le n\le m$ for some integer $m\ge 0$, we will show that they are also true
for $n = m+1$. By the induction hypotheses $z_m^\d \in B_{3\rho}(x_0)$, we may use Lemma \ref{lem2} and
(\ref{condition}) to derive that
\begin{align*}
\Delta_{m+1} \le \lambda_m^\d \Delta_m - \left(1-\frac{1}{\nu}\right) c_1\mu_m^\d \|F(z_m^\d)-y^\d\|^s.
\end{align*}
Since $\lambda_m^\d \ge 0$ and $\nu>1$, this together with the induction hypothesis $\Delta_m \le 0$ implies that
\begin{align}\label{10.11.3}
\Delta_{m+1} \le - \left(1-\frac{1}{\nu}\right) c_1 \mu_m^\d \|F(z_m^\d)-y^\d\|^s \le 0
\end{align}
which shows (\ref{10.11.2}) for $n=m+1$. Consequently, by taking $\hat x= x^\dag$ and using Assumption \ref{A2} (a), we have
$$
D_{\xi_{m+1}^\d} \Theta(x^\dag, x_{m+1}^\d) \le D_{\xi_m^\d}\Theta(x^\dag, x_m^\d) \le \cdots \le D_{\xi_0} \Theta(x^\dag, x_0)\le c_0 \rho^p.
$$
By virtue of Assumption \ref{A1}, we then have $c_0\|x_{m+1}^\d -x^\dag\|^p\le c_0 \rho^p$ which together with $x^\dag \in B_\rho(x_0)$
implies that $x_{m+1}^\d \in B_{2\rho}(x_0)$. Now we may use (\ref{basic1}) in Lemma \ref{lem1}, (\ref{add ass1}) and $\Delta_{m+1}\le 0$
to derive that
\begin{align*}
D_{\zeta_{m+1}^\d}\Theta(x^\dag, z_{m+1}^\d)
&\le D_{\xi_{m+1}^\d}\Theta(x^\dag, x_{m+1}^\d)+ \lambda_{m+1}^\d \triangle_{m+1} +c_0\rho^p \\
& \le D_{\xi_{m+1}^\d}\Theta(x^\dag, x_{m+1}^\d) +c_0\rho^p\\
&\le D_{\xi_{0}}\Theta(x^\dag, x_0)+c_0\rho^p \\
& \le 2c_0\rho^p\,.
\end{align*}
This together with Assumption \ref{A1} yields $\|x^\dag-z_{m+1}^\d\|\le 2^{1/p}\rho\le 2\rho$, and consequently
$z_{m+1}^\d\in B_{3\rho}(x_0)$. We therefore complete the proof of (\ref{10.11.1}) and (\ref{10.11.2}).

Since (\ref{10.11.1}) and (\ref{10.11.2}) are valid, the inequality (\ref{10.11.3}) holds for all $m\ge 0$.
Thus
\begin{align}\label{use later}
\left(1-\frac{1}{\nu}\right) c_1 \mu_m^\d \|F(z_m^\d)-y^\d\|^s
\le D_{\xi_{m}^\d}\Theta(\hat x, x_{m}^\d) - D_{\xi_{m+1}^\d}\Theta(\hat x, x_{m+1}^\d)
\end{align}
for $m\ge 0$. Hence, for any integer $n \ge 0$ we have
\begin{align}\label{10.11.4}
\left(1-\frac{1}{\nu}\right)c_1\sum_{m=0}^n \mu_m^\d \|y^\d-F(z_m^\d)\|^s
& \le D_{\xi_0}\Theta(\hat x, x_0)- D_{\xi_{n+1}^\d}\Theta(\hat x, x_{n+1}^\d) \nonumber \\
& \le D_{\xi_0}\Theta(\hat x, x_0)
\end{align}
which shows (\ref{sum}).

If $n_\d$ is not finite, then $\|F(z_m^\d)-y^\d\|>\tau \d$ for all integers $m$ and consequently, by using
$\|L(x)\| \le C_0$ from Assumption \ref{A2} (c) and the property of $J_s^{\Y}$, we have
\begin{align}\label{11.3.3}
\mu_m^\d &= \min\left\{ \frac{\bar \mu_0 \|F(z_m^\d)-y^\d\|^{p(s-1)}}{\|L(z_m^\d)^* J_s^{\Y}(F(z_m^\d)-y^\d)\|^p}, \bar \mu_1\right\}
\|F(z_m^\d)-y^\d\|^{p-s} \nonumber \\
& \ge \min\left\{ \frac{\bar \mu_0}{C_0^p}, \bar \mu_1\right\} \|F(z_m^\d)-y^\d\|^{p-s}.
\end{align}
Therefore, it follows from (\ref{sum}) that
\begin{align*}
\frac{\nu}{(\nu-1)c_1} D_{\xi_0} \Theta(\hat x, x_0)
&\ge \min\left\{\frac{\bar \mu_0}{C_0^p}, \bar \mu_1\right\} \sum_{m=0}^n \|F(z_m^\d)-y^\d\|^p\\
&\ge \min\left\{\frac{\bar \mu_0}{C_0^p}, \bar \mu_1\right\} (n+1) \tau^p \d^p
\end{align*}
for all $n\ge 0$. By taking $n\rightarrow \infty$ we derive a contradiction. Therefore, $n_\d$ must be finite.
\end{proof}

\begin{remark}\label{remark}
{\rm In the proof of Proposition \ref{monotonicity}, the condition (\ref{condition}) plays a crucial role. Note that, by the
definition of our method (\ref{TPGM}), $z_n^\d$ depends on $\lambda_n^\d$. Therefore, it is not immediately clear how to choose
$\lambda_n^\d$ to make (\ref{condition}) satisfied. One may ask if there exists $\lambda_n^\d$ such that \eqref{condition} holds.
Obviously $\lambda_n^\d=0$ satisfies the inequality, which correspond to the Landweber iteration. In order to achieve acceleration,
it is necessary to find nontrivial $\lambda_n^\d$. Note that when $\|F(z_n^\d)-y^\d\|\le \tau \d$ occurs, (\ref{condition})
forces $\lambda_n^\d =0$ because $\mu_n^\d =0$. Therefore we only need to consider the case $\|F(z_n^\d)-y^\d\| >\tau \d$. By using
(\ref{11.3.3}) we can derive a sufficient condition
\begin{align*}
\frac{1}{p^*(2c_0)^{p^*-1}} \left(\lambda_n^\d+(\lambda_n^\d)^{p^*}\right)\|\xi_{n}^\d-\xi_{n-1}^\d\|^{p^*}
\le M \tau^p \d^p,
\end{align*}
where
$$
M := \frac{c_1}{\nu} \min\left\{\frac{\bar \mu_0}{C_0^p}, \bar \mu_1\right\}.
$$
Considering the particular case when $p=2$, this thus leads to the choice
\begin{align}\label{choice lambda}
\lambda_n^\d: =\min\left\{-\frac{1}{2}+\sqrt{\frac{1}{4}+\frac{4c_0 M\tau^2\delta^2}{\|\xi_{n}^\d-\xi_{n-1}^\d\|^2}}, \frac{n}{n+\a}\right\}\,,
\end{align}
where $\a\ge 3$ is a given number. Note that in the above formula for $\lambda_n^\d$, inside the ``$\min$" the second argument is
taken to be $n/(n+\a)$ which is the combination parameter used in Nesterov's acceleration strategy; in case the first argument is large,
this formula may lead to $\lambda_n^\d = n/(n+\a)$ and consequently the acceleration effect of Nesterov can be utilized.
For general $p>1$, by placing the requirement $0\le \lambda_n^\d \le n/(n+\a) \le 1$, one may choose $\lambda_n^\d$ to satisfy
\begin{align*}
\frac{2 \lambda_n^\d}{p^*(2 c_0)^{p^*-1}} \|\xi_n^\d -\xi_{n-1}^\d\|^{p^*} \le M\tau^p \d^p
\end{align*}
which leads to the choice
\begin{align}\label{lambda2}
\lambda_n^\d = \min\left\{ \frac{\gamma_0 \d^p}{\|\xi_n^\d -\xi_{n-1}^\d\|^{p^*}}, \frac{n}{n+\a}\right\},
\quad \gamma_0 := \frac{1}{2} (2 c_0)^{p^*-1} p^* M\tau^p.
\end{align}
We remark that the choices of $\lambda_n^\d$ given in (\ref{choice lambda}) and (\ref{lambda2}) may decrease to 0 as $\d\rightarrow 0$,
consequently the acceleration effect could also decrease for $\d\rightarrow 0$. Since for small values of $\d$
the acceleration is needed most, other strategies should be explored. We will give a further consideration on the choice of
$\lambda_n^\d$ in the next subsection.
}
\end{remark}

In order to establish the regularization property of the method (\ref{TPGM}), we need to consider its noise-free counterpart.
By dropping the superscript $\d$ in all the quantities involved in (\ref{TPGM}), it leads to the following formulation
of the two-point gradient method for the noise-free case:
\begin{equation} \label{TPGM_E}
\begin{aligned}
\zeta_n &= \xi_{n} +\lambda_n (\xi_n-\xi_{n-1}),\\
z_n &= \nabla \Theta^*(\zeta_n),\\
\xi_{n+1} &= \zeta_n-\mu_{n} L(z_n)^* J_s^{\Y}(r_n),\\
x_{n+1} &= \nabla \Theta^*(\xi_{n+1}),
\end{aligned}
\end{equation}
where $r_n = F(z_n)-y$, $\lambda_n\ge 0$ is the combination parameter, and $\mu_n$ is the step size given by
\begin{align*}
\mu_n = \left\{\begin{array}{lll}
\displaystyle{\min\left\{\frac{\bar \mu_0 \|r_n\|^{p(s-1)}}{\|L(z_n)^*J_s^{\Y}(r_n)\|^p}, \bar \mu_1\right\}} \|r_n\|^{p-s}
  & \mbox{ if } F(z_n) \ne y,\\[2ex]
0 & \mbox{ if } F(z_n) = y.
\end{array}\right.
\end{align*}
We will first establish a convergence result for (\ref{TPGM_E}). The following result plays a crucial role
in the argument.

\begin{proposition}\label{general}
Consider the equation (\ref{sys}) for which Assumption \ref{A2} holds.
Let $\Theta: \X\to (-\infty, \infty]$ be a proper, lower semi-continuous and uniformly convex function.
Let $\{x_n\}\subset B_{2\rho}(x_0)$ and $\{\xi_n\}\subset \X^*$ be such that

\begin{enumerate}
\item[(i)] $\xi_n\in \p \Theta(x_n)$ for all $n$;

\item[(ii)] for any solution $\hat x$ of (\ref{sys}) in $B_{2\rho}(x_0)\cap \D(\Theta)$ the sequence
$\{D_{\xi_n}\Theta(\hat x, x_n)\}$ is monotonically decreasing;

\item[(iii)] $\lim_{n\rightarrow \infty} \|F(x_n)-y\|=0$.

\item[(iv)] there is a subsequence $\{n_k\}$ with $n_k\rightarrow \infty$ such that for any solution
$\hat x$ of (\ref{sys}) in $B_{2\rho}(x_0)\cap \D(\Theta)$ there holds
\begin{equation}\label{12.15.11}
\lim_{l\rightarrow \infty} \sup_{k\ge l} |\l \xi_{n_k}-\xi_{n_l}, x_{n_k}-\hat x\r| = 0.
\end{equation}
\end{enumerate}
Then there exists a solution $x_*$ of (\ref{sys}) in $B_{2\rho}(x_0)\cap \D(\Theta)$ such that
\begin{equation*}
\lim_{n\rightarrow \infty} D_{\xi_n}\Theta(x_*, x_n)=0.
\end{equation*}
If, in addition, $x^\dag \in B_\rho(x_0)\cap \D(\Theta)$ and $\xi_{n+1}-\xi_n \in \overline{{\mathcal R}(L(x^\dag)^*)}$
for all $n$, then $x_*=x^\dag$.
\end{proposition}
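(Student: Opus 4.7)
The plan is to first establish strong Cauchy convergence of the subsequence $\{x_{n_k}\}$ to some $x_*$ by turning the three hypotheses (ii)--(iv) into control of the Bregman distances $D_{\xi_{n_l}}\Theta(x_{n_k}, x_{n_l})$ through the three-point identity \eqref{2.1}, then to identify $x_*$ as a solution lying in $B_{2\rho}(x_0) \cap \D(\Theta)$ via the weak closedness of $F$ and the lower semi-continuity of $\Theta$. Monotonicity of $\{D_{\xi_n}\Theta(x_*,x_n)\}$ will then upgrade subsequential to full convergence. For the second half, I will combine the range condition $\xi_{n+1}-\xi_n \in \overline{\mathcal R(L(x^\dag)^*)}$ with the tangential condition to force a decisive inner product to vanish and conclude via the minimality of $x^\dag$.

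Fixing any solution $\hat x \in B_{2\rho}(x_0)\cap \D(\Theta)$, identity \eqref{2.1} applied with $x=\hat x$, $(x_1,\xi_1)=(x_{n_k},\xi_{n_k})$, $(x_2,\xi_2)=(x_{n_l},\xi_{n_l})$ gives
\[
D_{\xi_{n_l}}\Theta(x_{n_k},x_{n_l}) = D_{\xi_{n_l}}\Theta(\hat x,x_{n_l}) - D_{\xi_{n_k}}\Theta(\hat x,x_{n_k}) + \l \xi_{n_k}-\xi_{n_l}, x_{n_k}-\hat x\r.
\]
Hypothesis (ii) makes $\{D_{\xi_n}\Theta(\hat x,x_n)\}$ convergent, so the first difference vanishes as $k\ge l\to\infty$, while (iv) drives the pairing to $0$ uniformly in $k\ge l$; hence $\sup_{k\ge l} D_{\xi_{n_l}}\Theta(x_{n_k},x_{n_l})\to 0$. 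The uniform convexity of $\Theta$ supplies a strictly increasing $h$ with $h(0)=0$ satisfying $D_{\xi_{n_l}}\Theta(x_{n_k},x_{n_l}) \ge h(\|x_{n_k}-x_{n_l}\|)$, so $\{x_{n_k}\}$ is Cauchy and converges strongly to some $x_*\in B_{2\rho}(x_0)$. The weak closedness of $F$ combined with $F(x_{n_k})\to y$ from (iii) yields $F(x_*)=y$. To verify $x_*\in\D(\Theta)$, pass to the liminf in $k$ in the explicit expression for $D_{\xi_{n_l}}\Theta(x_{n_k},x_{n_l})$: the lower semi-continuity of $\Theta$ and the continuity of the pairing with the fixed element $\xi_{n_l}$ give $D_{\xi_{n_l}}\Theta(x_*,x_{n_l}) \le \liminf_k D_{\xi_{n_l}}\Theta(x_{n_k},x_{n_l})$; since the right-hand side is finite and tends to $0$ for $l$ large, so is $\Theta(x_*)$, and moreover $D_{\xi_{n_l}}\Theta(x_*,x_{n_l})\to 0$.

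Taking $\hat x = x_*$ in (ii) now makes $\{D_{\xi_n}\Theta(x_*,x_n)\}$ monotonically decreasing; since the subsequence converges to $0$, the full sequence does as well. For $x_*=x^\dag$ under the range condition, I apply \eqref{2.1} with $x=x_*$ and with $x=x^\dag$ (both using the base pair $(x_0,\xi_0)$) and subtract to obtain
\[
D_{\xi_0}\Theta(x_*,x_0) - D_{\xi_0}\Theta(x^\dag,x_0) = D_{\xi_n}\Theta(x_*,x_n) - D_{\xi_n}\Theta(x^\dag,x_n) + \l \xi_n-\xi_0, x_*-x^\dag\r.
\]
Assumption \ref{A2}(c) applied to $(x_*,x^\dag)$ together with $F(x_*)=F(x^\dag)=y$ forces $L(x^\dag)(x^\dag-x_*)=0$. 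Since $\xi_n-\xi_0=\sum_{j=0}^{n-1}(\xi_{j+1}-\xi_j)\in \overline{\mathcal R(L(x^\dag)^*)}$, the pairing vanishes. Letting $n\to\infty$ and using $D_{\xi_n}\Theta(x_*,x_n)\to 0$ and monotonicity of $\{D_{\xi_n}\Theta(x^\dag,x_n)\}$ gives $D_{\xi_0}\Theta(x_*,x_0)\le D_{\xi_0}\Theta(x^\dag,x_0)$, whence the minimality \eqref{xdag} and the uniqueness from Lemma \ref{lem0} close the argument.

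The main obstacle is the verification that $x_*\in\D(\Theta)$: neither $\{\Theta(x_{n_k})\}$ nor $\{\xi_{n_k}\}$ is \emph{a priori} bounded, so the naive passage to the limit in $D_{\xi_{n_k}}\Theta(x_*,x_{n_k})=\Theta(x_*)-\Theta(x_{n_k})-\l\xi_{n_k},x_*-x_{n_k}\r$ fails. The fix is to exploit instead the Bregman distance $D_{\xi_{n_l}}\Theta(\cdot,x_{n_l})$ in which the base point $\xi_{n_l}$ is \emph{fixed} before sending $k\to\infty$, so that the pairing term passes to the limit by strong convergence of $x_{n_k}$ and lower semi-continuity of $\Theta$ does the rest.
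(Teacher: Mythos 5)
Your proof is correct, and it is essentially the argument the paper outsources to \cite[Proposition 3.6]{jw13}: the three-point identity \eqref{2.1} combined with (ii) and (iv) to make $\{x_{n_k}\}$ Cauchy via uniform convexity, identification of the limit $x_*$ as a solution in $\D(\Theta)$ through weak closedness of $F$ and lower semi-continuity of $\Theta$ with the base point $\xi_{n_l}$ held fixed, monotonicity to upgrade to the full sequence, and the range condition together with $L(x^\dag)(x_*-x^\dag)=0$ from Assumption \ref{A2}(c) to kill the pairing and invoke the minimality \eqref{xdag}. Since the paper gives no details here, your write-up simply supplies the proof behind the citation.
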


\begin{proof}
This result essentially follows from \cite[Proposition 3.6]{jw13} and its proof.
\end{proof}

\begin{theorem}\label{th noisefree}
Let $\X$ be reflexive, let $\Y$ be uniformly smooth, and let Assumption \ref{A1} and Assumption \ref{A2} hold.
Assume that $\bar \mu_0>0$ is chosen such that
$$
1-\eta - \frac{1}{p^*} \left(\frac{\bar \mu_0}{2 c_0}\right)^{p^*-1} >0
$$
and the combination parameters $\{\lambda_n\}$ are chosen to satisfy the counterparts of \eqref{add ass1}
and \eqref{condition} with $\d = 0$  and
\begin{align}\label{add ass3}
\sum_{n=0}^\infty\lambda_n\|\xi_n-\xi_{n-1}\|<\infty.
\end{align}
Then, there exists a solution $x_*$ of (\ref{sys}) in $B_{2\rho}(x_0)\cap \D(\Theta)$ such that
\begin{equation*}
\lim_{n\rightarrow \infty} \|x_n-x_*\|=0\quad  \mbox{ and } \quad \lim_{n\rightarrow \infty} D_{\xi_n}\Theta(x_*, x_n)=0.
\end{equation*}
If in addition $\N(L(x^\dag))\subset \N(L(x))$ for all $x\in B_{3\rho}(x_0)$, then $x_*=x^\dag$.
\end{theorem}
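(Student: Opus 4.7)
\medskip
\noindent\textbf{Proof proposal.} The plan is to invoke Proposition \ref{general} with the sequences $\{x_n\}$ and $\{\xi_n\}$ produced by the noise-free iteration (\ref{TPGM_E}). First I would reproduce the noise-free analogue of Proposition \ref{monotonicity}: the hypotheses of the theorem are tailored so that the proofs of Lemma \ref{lem1}, Lemma \ref{lem2}, and Proposition \ref{monotonicity} carry over verbatim with $\delta=0$, yielding $z_n\in B_{3\rho}(x_0)$, $x_n\in B_{2\rho}(x_0)$, the monotone decay $D_{\xi_n}\Theta(\hat x,x_n)\le D_{\xi_{n-1}}\Theta(\hat x,x_{n-1})$, and the summability $\sum_{n=0}^{\infty} \mu_n \|F(z_n)-y\|^s <\infty$ for any solution $\hat x \in B_{2\rho}(x_0)\cap \D(\Theta)$. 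This takes care of conditions (i) and (ii) of Proposition \ref{general} directly, and by applying the lower bound on $\mu_n$ (the analogue of (\ref{11.3.3})) whenever $F(z_n)\neq y$ one concludes $\|F(z_n)-y\|\to 0$. To pass from $z_n$ to $x_n$ in condition (iii), I would use (\ref{2.4}) to estimate $\|z_n-x_n\|\le (\lambda_n\|\xi_n-\xi_{n-1}\|/(2c_0))^{1/(p-1)}$, which tends to $0$ by (\ref{add ass3}); the continuity of $F$ on $B_{3\rho}(x_0)$ implied by Assumption \ref{A2}(c) then gives $\|F(x_n)-y\|\to 0$.

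The main obstacle is verifying condition (iv). Following the strategy used in \cite{jw13} for the Landweber iteration, I would choose the subsequence $\{n_k\}$ so that $\|r_{n_k}\|= \min_{0\le j\le n_k}\|r_j\|$ (or any subsequence along which $\|r_{n_k}\|$ tends to zero sufficiently fast) and then telescope
\[
\xi_{n_k}-\xi_{n_l} = -\sum_{j=n_l}^{n_k-1}\mu_j L(z_j)^*J_s^{\Y}(r_j) + \sum_{j=n_l}^{n_k-1}\lambda_j(\xi_j-\xi_{j-1}).
\]
The second sum is controlled directly using $\|x_{n_k}-\hat x\|\le 4\rho$ and the absolute summability (\ref{add ass3}). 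For the first sum I would write
\[
L(z_j)(x_{n_k}-\hat x) = L(z_j)(x_{n_k}-z_j) + L(z_j)(z_j-\hat x),
\]
and apply the tangential condition together with $F(\hat x)=y$ to replace $L(z_j)(z_j-\hat x)$ by $r_j$ up to an error of size $\eta\|r_j\|$; the resulting diagonal pieces assemble into the tail $\sum_{j\ge n_l}\mu_j\|r_j\|^s$, while the cross terms $\mu_j\|r_j\|^{s-1}\|L(z_j)(x_{n_k}-z_j)\|$ are bounded via Young's inequality and Assumption \ref{A2}(c) by $\mu_j\|r_j\|^s$ plus $\mu_j\|x_{n_k}-z_j\|^p$, both of which vanish in the limit $l\to\infty$ uniformly in $k\ge l$ by the summability estimates and the already-established $\|z_j-x_j\|\to 0$. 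This delivers (\ref{12.15.11}) and Proposition \ref{general} produces $x_*\in B_{2\rho}(x_0)\cap \D(\Theta)$ with $D_{\xi_n}\Theta(x_*,x_n)\to 0$; the norm convergence $\|x_n-x_*\|\to 0$ then follows from the $p$-convexity (\ref{pconvex}) of $\Theta$.

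Finally, for the identification $x_*=x^\dag$ under the extra source-type hypothesis $\N(L(x^\dag))\subset \N(L(x))$ on $B_{3\rho}(x_0)$, I would note that this inclusion implies $\overline{\R(L(z_n)^*)}\subset \overline{\R(L(x^\dag)^*)}$ for every $n$ by taking annihilators. Then a straightforward induction on the identity
\[
\xi_{n+1}-\xi_n = -\mu_n L(z_n)^*J_s^{\Y}(r_n) + \lambda_n(\xi_n-\xi_{n-1}),
\]
with base case $\xi_0-\xi_{-1}=0$, places every increment $\xi_{n+1}-\xi_n$ in $\overline{\R(L(x^\dag)^*)}$. Since $x^\dag\in B_\rho(x_0)\cap \D(\Theta)$ by Assumption \ref{A2}(a), the second part of Proposition \ref{general} yields $x_*=x^\dag$, completing the argument.
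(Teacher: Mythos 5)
Your overall architecture (verify (i)--(iv) of Proposition \ref{general}, then use the recursion for $\xi_{n+1}-\xi_n$ and the null-space inclusion for the identification $x_*=x^\dag$) matches the paper, and your treatment of (i), (ii), (iii) and of the final step is sound; in particular your route to (iii) via $\|x_n-z_n\|\le\bigl(\lambda_n\|\xi_n-\xi_{n-1}\|/(2c_0)\bigr)^{1/(p-1)}\to 0$ and Lipschitz continuity of $F$ is a legitimate (slightly different) alternative to the paper's estimate. The problem is in your verification of (iv). After the telescoping, the gradient contribution produces terms $\mu_j\|r_j\|^{s-1}\|L(z_j)(x_{n_k}-\hat x)\|$, and when you split off $L(z_j)(x_{n_k}-z_j)$ you propose to absorb the cross terms via Young's inequality into $\mu_j\|r_j\|^s$ plus $\mu_j\|x_{n_k}-z_j\|^p$, claiming both tails vanish ``by the summability estimates and the already-established $\|z_j-x_j\|\to 0$.'' This conflates $\|x_{n_k}-z_j\|$ with $\|x_j-z_j\|$: for $j\neq n_k$ the quantity $\|x_{n_k}-z_j\|$ is merely bounded (by a multiple of $\rho$), not small, and the only summability you have is $\sum_j\mu_j\|F(z_j)-y\|^s<\infty$ --- the series $\sum_j\mu_j$ itself is generally divergent (e.g.\ for $p=s=2$ one only gets $\mu_j\le\bar\mu_1$). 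So the tail $\sum_{j\ge n_l}\mu_j\|x_{n_k}-z_j\|^p$ need not go to zero and the argument for \eqref{12.15.11} breaks down.

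The missing ingredient is a \emph{relative} bound on the discrepancy between $x_n$ and $z_n$: condition \eqref{condition} with $\d=0$ gives $\lambda_n^{p^*}\|\xi_n-\xi_{n-1}\|^{p^*}\le \tfrac{c_1 p^*(2c_0)^{p^*-1}}{\nu}\,\mu_n\|F(z_n)-y\|^s$, which combined with \eqref{2.4} and $\mu_n\le\bar\mu_1\|F(z_n)-y\|^{p-s}$ yields $\|F(x_n)-F(z_n)\|\le C\,\|F(z_n)-y\|$ (the paper's estimate \eqref{10.12.5}); your absolute bound $\|x_n-z_n\|\to 0$ is strictly weaker and does not relate the error to the residual. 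With this relative bound and your (correct) choice of the subsequence $\{n_k\}$ along which $\|F(z_{n_k})-y\|\le\|F(z_j)-y\|$ for $j<n_k$, one gets $\|L(z_j)(x_{n_k}-\hat x)\|\le C_1\|F(z_j)-y\|$ directly from the tangential cone condition --- no Young's inequality is needed --- and the entire gradient contribution collapses into the tail of $\sum_j\mu_j\|F(z_j)-y\|^s$, which is controlled by the telescoping Bregman-distance inequality \eqref{use later}. You should replace your cross-term argument by this estimate; the rest of your proof then goes through.
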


\begin{proof} We will use Proposition \ref{general} to prove the result. By the definition $x_n = \nabla \Theta^*(\xi_n)$
we have $\xi_n \in \p \Theta(x_n)$ which shows (i) in Proposition \ref{general}. By using the same argument for
proving Proposition \ref{monotonicity} we can show that $z_n \in B_{3\rho}(x_0)$ and $x_n \in B_{2\rho}(x_0)$ for all $n$ with
\begin{align}\label{2018.11.1}
D_{\xi_{n+1}}\Theta(\hat x, x_{n+1}) \le D_{\xi_n} \Theta(\hat x, x_n)
\end{align}
for any solution $\hat x$ of (\ref{sys}) in $B_{2\rho}(x_0) \cap \D(\Theta)$ and
\begin{align}\label{2018.11.1.1}
\sum_{n=0}^\infty \mu_n \|F(z_n)-y\|^s <\infty.
\end{align}
From (\ref{2018.11.1}) it follows that (ii) in Proposition \ref{general} holds. Moreover, by using the definition of
$\mu_n$ and the similar derivation for (\ref{11.3.3}) we have
$$
\min\left\{\frac{\bar \mu_0}{C_0^p}, \bar \mu_1\right\} \|F(z_n)-y\|^p
\le \mu_n \|F(z_n)-y\|^s \le \bar \mu_1 \|F(z_n)-y\|^p.
$$
Thus it follows from (\ref{2018.11.1.1}) that
$$
\sum_{n=0}^\infty \|F(z_n)-y\|^p <\infty.
$$
Consequently
\begin{align}\label{residual limit}
\lim_{n\rightarrow \infty} \|F(z_n)-y\|= 0.
\end{align}
By using Assumption \ref{A2} (c), (\ref{TPGM_E}), (\ref{2.4}) and (\ref{condition}) with $\d =0$, we have
\begin{align}\label{10.12.5}
\|F(x_n)-F(z_n)\|
& \le \frac{1}{1-\eta} \|L(z_n) (x_n-z_n)\| \le \frac{C_0}{1-\eta} \|x_n-z_n\|\nonumber \displaybreak[0]\\
& = \frac{C_0}{1-\eta} \|\nabla\Theta^*(\xi_n)-\nabla \Theta^*(\zeta_n)\| \nonumber \\
& \le \frac{C_0}{(1-\eta)(2c_0)^{p^*-1}}\|\xi_n-\zeta_n\|^{p^*-1} \nonumber \displaybreak[0]\\
& = \frac{C_0}{(1-\eta)(2c_0)^{p^*-1}}\lambda_n^{p^*-1}\|\xi_n-\xi_{n-1}\|^{p^*-1} \nonumber \displaybreak[0]\\
& \le \frac{C_0}{1-\eta} \left(\frac{c_1 p^*}{2 c_0 \nu}\right)^{1/p} (\mu_n\|F(z_n)-y\|^s)^{1/p} \nonumber \displaybreak[0]\\
& \le \frac{C_0}{1-\eta} \left(\frac{c_1 p^*\bar \mu_1}{2 c_0 \nu}\right)^{1/p} \|F(z_n)-y\|.
\end{align}
The combination of (\ref{residual limit}) and (\ref{10.12.5}) implies that $\|F(x_n)-y\| \rightarrow 0$ as $n\rightarrow \infty$
which shows (iii) in Proposition \ref{general}.

In order to establish the convergence result, it remains only to show (iv) in Proposition \ref{general}.
To this end, we consider $\|F(z_n)-y\|$. It is known that $\|F(z_n)-y\| \rightarrow 0$ as $n\rightarrow \infty$.
If $\|F(z_n)-y\| =0$ for some $n$, then (\ref{condition}) with $\d =0$ forces $\lambda_n (\xi_n-\xi_{n-1}) =0$.
Thus $\zeta_n = \xi_n$. On the other hand, we also have $\mu_n=0$ and hence $\xi_{n+1} = \zeta_n$. Consequently
$\xi_{n+1} = \zeta_n =\xi_n$ and
$$
\zeta_{n+1} = \xi_{n+1} + \lambda_{n+1}(\xi_{n+1}-\xi_n) = \xi_{n+1} = \zeta_n.
$$
Thus
$$
z_{n+1} = \nabla \Theta^*(\zeta_{n+1}) = \nabla \Theta^*(\zeta_n) = z_n
$$
which implies that $F(z_{n+1})= F(z_n) =y$. By repeating the argument one can see
that $F(z_m)=y$ for all $m\ge n$. Therefore we can choose a strictly increasing sequence
$\{n_k\}$ of integers by letting $n_0=0$ and for each $k\ge 1$, letting $n_k$ be the first integer satisfying
\begin{align*}
n_k\geq n_{k-1}+1\quad\textrm{ and }\quad  \|F(z_{n_k})-y\| \le \|F(z_{n_{k-1}})-y\|.
\end{align*}
For such chosen strictly increasing sequence $\{n_k\}$, it is easily seen that
\begin{align}\label{10.11.5}
\|F(z_{n_k})-y\| \leq \|F(z_n)-y\|,\quad 0\leq n<n_k.
\end{align}
For any integers $0\le l<k<\infty$, we consider
$$
\l\xi_{n_k}-\xi_{n_l},x_{n_k}-\hat x\r =\sum_{n=n_l}^{n_k-1} \l \xi_{n+1}-\xi_n, x_{n_k} -\hat x\r.
$$
By using the definition of $\xi_{n+1}$ we have
$$
\xi_{n+1}-\xi_n = \lambda_n (\xi_n-\xi_{n-1}) -\mu_n L(z_n)^* J_s^{\Y}(F(z_n)-y).
$$
Therefore, by using the property of $J_s^{\Y}$, we have
\begin{align}\label{10.12.1}
\left|\l\xi_{n_k}-\xi_{n_l},x_{n_k}-\hat x\r\right|
& \le \sum_{n=n_l}^{n_k-1} \lambda_n |\l \xi_n-\xi_{n-1}, x_{n_k}-\hat x\r| \nonumber\\
& \quad \, + \sum_{n=n_l}^{n_k-1} \mu_n |\l J_s^{\Y} (F(z_n)-y), L(z_n) (x_{n_k}-\hat x)\r| \nonumber \displaybreak[0]\\
&\le \sum_{n=n_l}^{n_k-1}\lambda_n\|\xi_{n}-\xi_{n-1}\|\|x_{n_k}-\hat x\| \nonumber \\
& \quad \, + \sum_{n=n_l}^{n_k-1}\mu_n\|F(z_n)-y\|^{s-1}\|L(z_n)(x_{n_k}-\hat x)\|.
\end{align}
By using Assumption \ref{A2} (c) and (\ref{10.11.5}), we obtain for $n<n_k$ that
\begin{align*}
\|L(z_n)(x_{n_k}-\hat x)\|&\le \|L(z_n)(x_{n_k}-z_n)\|+\|L(z_n)(z_n-\hat x)\|\\
&\le (1+\eta)\left(\|F(x_{n_k})-F(z_n)\| + \|F(z_n)-y\|\right)\\
&\le 2(1+\eta)\|F(z_n)-y\| + (1+\eta)\|F(x_{n_k})-y\|\\
&\le 2(1+\eta)\|F(z_n)-y\| \\
& \quad \, + (1+\eta)\left(\|F(x_{n_k})-F(z_{n_k})\|+\|F(z_{n_k})-y\|\right)\\
& \le 3(1+\eta) \|F(z_n)-y\| + (1+\eta) \|F(x_{n_k})-F(z_{n_k})\|.
\end{align*}
By using (\ref{10.12.5}) and (\ref{10.11.5}), we have for $n<n_k$ that
\begin{align*}
\|F(x_{n_k})-F(z_{n_k})\|
& \le \frac{C_0}{1-\eta} \left(\frac{c_1 p^*\bar \mu_1}{2 c_0 \nu}\right)^{1/p} \|F(z_{n_k})-y\|\\
& \le \frac{C_0}{1-\eta} \left(\frac{c_1 p^*\bar \mu_1}{2 c_0 \nu}\right)^{1/p} \|F(z_n)-y\|.
\end{align*}
Therefore
\begin{align}\label{10.12.2}
\|L(z_n)(x_{n_k}-\hat x)\| \le C_1 \|F(z_n)-y\|
\end{align}
for $n<n_k$, where $C_1 := 3(1+\eta) + \frac{(1+\eta)C_0}{1-\eta} \left(\frac{c_1 p^*\bar \mu_1}{2 c_0 \nu}\right)^{1/p}$.
Combining (\ref{10.12.2})
with (\ref{10.12.1}) and using $x_{n_k}\in B_{2\rho}(x_0)$ we obtain
\begin{align*}
\left|\l\xi_{n_k}-\xi_{n_l},x_{n_k}-\hat x\r\right|
& \le \sum_{n=n_l}^{n_k-1}\lambda_n\|\xi_{n}-\xi_{n-1}\|\|x_{n_k}-\hat x\|+ C_1 \sum_{n=n_l}^{n_k-1} \mu_n \|F(z_n)-y\|^s\\
& \le 4\rho \sum_{n=n_l}^{n_k-1}\lambda_n\|\xi_{n}-\xi_{n-1}\|+ C_1 \sum_{n=n_l}^{n_k-1} \mu_n \|F(z_n)-y\|^s.
\end{align*}
By making use of \eqref{use later}, we obtain, with $C_2:= \nu C_1/((\nu-1) c_1)$,
\begin{align}\label{10.12.7}
& \left|\l\xi_{n_k}-\xi_{n_l},x_{n_k}-\hat x\r\right| \nonumber \\
&\le 4 \rho\sum_{n=n_l}^{n_k-1}\lambda_n\|\xi_{n}-\xi_{n-1}\|
+ C_2 \sum_{n=n_l}^{n_k-1}\left(D_{\xi_n}\Theta(\hat x,x_n)-D_{\xi_{n+1}}\Theta(\hat x,x_{n+1})\right) \nonumber \displaybreak[0]\\
& = 4 \rho \sum_{n=n_l}^{n_k-1}\lambda_n\|\xi_{n}-\xi_{n-1}\|
+C_2 \left(D_{\xi_{n_l}}\Theta(\hat x,x_{n_l})-D_{\xi_{n_k}}\Theta(\hat x,x_{n_k})\right).
\end{align}
Let $\gamma := \lim_{n\rightarrow \infty} D_{\xi_n}\Theta(\hat x, x_n)$ whose existence is guaranteed by the monotonicity
of $\{D_{\xi_n}\Theta(\hat x, x_n)\}$. Then
\begin{align*}
\sup_{k\ge l} \left|\l\xi_{n_k}-\xi_{n_l},x_{n_k}-\hat x\r\right|
& \le 4 \rho \sum_{n=n_l}^\infty\lambda_n\|\xi_{n}-\xi_{n-1}\| + C_2 \left(D_{\xi_{n_l}}\Theta(\hat x,x_{n_l})-\gamma\right).
\end{align*}
Thus it follows from (\ref{add ass3}) that
\begin{align*}
\lim_{l\rightarrow \infty} \sup_{k\ge l} \left|\l\xi_{n_k}-\xi_{n_l},x_{n_k}-\hat x\r\right|
\le C_2 \left(\lim_{l\rightarrow \infty} D_{\xi_{n_l}}\Theta(\hat x,x_{n_l})-\gamma\right)=0
\end{align*}
which verifies (iv) in Proposition \ref{general}.

To show $x_* = x^\dag$ under the additional condition $\N(L(x^\dag))\subset \N(L(x))$ for
all $x\in B_{3\rho}(x_0)$, we observe from (\ref{TPGM_E}) and $\xi_0-\xi_{-1} =0$ that
\begin{equation*}\label{n}
  \begin{aligned}
\xi_{n+1}-\xi_n &= -\mu_n L(z_n)^*(F(z_n)-y) + \lambda_n(\xi_n-\xi_{n-1})\\
& = -\sum_{k=0}^{n}\left(\prod_{i=k+1}^{n} \lambda_i \right)\mu_k L(z_k)^*(F(z_k)-y).
\end{aligned}
\end{equation*}
Since $\X$ is reflexive and $\N(L(x^\dag))\subset \N(L(x))$, we have $\overline{\R(L(x)^*)}\subset
\overline{\R(L(x^\dag)^*)}$ for all $x\in B_{3\rho}(x_0)$. Recall that $z_k\in B_{3\rho}(x_0)$. It thus
follows from the above formula that $\xi_{n+1}-\xi_n \in \overline{{\mathcal R}(L(x^\dag)^*)}$. Therefore
we may use the second part of Proposition \ref{general} to conclude the proof.
\end{proof}

Next, we are going to show that, using the discrepancy principle \eqref{dp} as a stopping rule, our method (\ref{TPGM})
becomes a convergent regularization method, if we additionally assume that $\lambda_n^\d$ depends continuously on $\d$
in the sense that $\lambda_n^\d \rightarrow \lambda_n$ as $\d\rightarrow 0$ for all $n$. We need the following stability result.

\begin{lemma}\label{stability}
Let $\X$ be reflexive, let $\Y$ be uniformly smooth, and let Assumption \ref{A1} and Assumption \ref{A2} hold.
Assume that $\tau>1$ and $\bar \mu_0>0$ are chosen to satisfy (\ref{c1}).  Assume also that the combination parameters
$\{\lambda_n^\d\}$ are chosen to depend continuously on $\d$ as $\d \rightarrow 0$ and satisfy \eqref{add ass1},
\eqref{condition} and \eqref{add ass3}. Then for all $n\ge 0$ there hold
\begin{equation*}
\zeta_n^\d \rightarrow \zeta_n, \quad z_n^\d \rightarrow z_n, \quad \xi_{n}^\d \rightarrow \xi_{n}
\quad \mbox{and} \quad x_{n}^\d\rightarrow x_{n} \quad \textrm{as } \d\rightarrow 0.
\end{equation*}
\end{lemma}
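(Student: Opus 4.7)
The plan is to argue by induction on $n\ge 0$. The base case is immediate: since $x_{-1}^\d=x_0^\d=x_0$ and $\xi_{-1}^\d=\xi_0^\d=\xi_0$ are independent of $\d$, the combination formula gives $\zeta_0^\d=\xi_0=\zeta_0$, and hence $z_0^\d=\nabla\Theta^*(\xi_0)=z_0$, so all four convergences trivially hold for $n=0$. The hypotheses \eqref{add ass1}--\eqref{condition} together with Proposition \ref{monotonicity} (applied to both $\d>0$ and $\d=0$) ensure that every iterate used below lies in $B_{3\rho}(x_0)$, so that the continuity statements from Assumption \ref{A2}(c) and the H\"older estimate \eqref{2.4} are applicable.

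For the inductive step, suppose the four convergences hold at index $n$. The core claim is $\xi_{n+1}^\d\to \xi_{n+1}$; once this is in hand, the other three follow automatically, since \eqref{2.4} yields $x_{n+1}^\d=\nabla\Theta^*(\xi_{n+1}^\d)\to x_{n+1}$, the assumed continuity of $\lambda_{n+1}^\d$ in $\d$ combined with the inductive hypothesis produces $\zeta_{n+1}^\d\to \zeta_{n+1}$, and a further application of \eqref{2.4} gives $z_{n+1}^\d\to z_{n+1}$. Writing
\[
\xi_{n+1}^\d-\xi_{n+1}=(\zeta_n^\d-\zeta_n)-\bigl(\mu_n^\d L(z_n^\d)^*J_s^\Y(r_n^\d)-\mu_n L(z_n)^*J_s^\Y(r_n)\bigr),
\]
the first bracket vanishes by induction. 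For the second, continuity of $F$, continuity of $L(\cdot)$ on $B_{3\rho}(x_0)$, and continuity of $J_s^\Y$ (a consequence of $\Y$ being uniformly smooth), combined with $z_n^\d\to z_n$ and $y^\d\to y$, immediately deliver $r_n^\d\to r_n$, $J_s^\Y(r_n^\d)\to J_s^\Y(r_n)$, and $L(z_n^\d)^*\to L(z_n)^*$; what remains is to control $\mu_n^\d$.

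I would handle this in two cases. If $F(z_n)\ne y$, then $\|r_n\|>0$ so $\|r_n^\d\|>\tau\d$ for all sufficiently small $\d$ and $\mu_n^\d$ takes its nontrivial form; the first argument of the defining $\min$ either converges to a positive finite limit (when $L(z_n)^*J_s^\Y(r_n)\ne 0$) or diverges to $+\infty$ (when that quantity vanishes), and in both situations continuity of $\min$ together with convergence of $\|r_n^\d\|^{p-s}$ yields $\mu_n^\d\to \mu_n$. The subtle case is $F(z_n)=y$, where $\mu_n=0$ and $\xi_{n+1}=\zeta_n$: here $\mu_n^\d$ itself may fail to converge to $0$ (the factor $\|r_n^\d\|^{p-s}$ can blow up when $s>p$), and I would instead bound the whole gradient update directly, using $\mu_n^\d\le \bar\mu_1\|r_n^\d\|^{p-s}$ (from $\min\le \bar\mu_1$) together with $\|L(z_n^\d)^*J_s^\Y(r_n^\d)\|\le C_0\|r_n^\d\|^{s-1}$ (from Assumption \ref{A2}(c) and $\|J_s^\Y(v)\|=\|v\|^{s-1}$), to obtain
\[
\|\mu_n^\d L(z_n^\d)^*J_s^\Y(r_n^\d)\|\le \bar\mu_1 C_0\|r_n^\d\|^{p-1}\to 0,
\]
since $p\ge 2>1$ and $\|r_n^\d\|\to\|r_n\|=0$.

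The main obstacle is precisely this degenerate case $F(z_n)=y$: the individual factors of the gradient update need not stabilize, yet their product must vanish, and the argument hinges on the algebraic fact that the potentially harmful exponent $p-s$ from $\mu_n^\d$ is exactly balanced by the $s-1$ from the duality mapping, leaving the benign exponent $p-1>0$. Everything else in the induction is routine continuity, relying on \eqref{2.4}, Assumption \ref{A2}(c), uniform smoothness of $\Y$, and the assumption that $\lambda_n^\d\to\lambda_n$.
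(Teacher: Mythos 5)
Your proposal is correct and follows essentially the same route as the paper's proof: induction on $n$, splitting the inductive step into the cases $F(z_n)\ne y$ (where $\mu_n^\d\to\mu_n$ is obtained by examining whether $L(z_n)^*J_s^{\Y}(r_n)$ vanishes) and $F(z_n)=y$ (where the paper likewise bounds the whole update by $C_0\bar\mu_1\|F(z_n^\d)-y^\d\|^{p-1}\to 0$ rather than trying to show $\mu_n^\d\to 0$). The balancing of the exponents $p-s$ and $s-1$ that you highlight is exactly the mechanism behind the paper's estimate.
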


\begin{proof}
The result is trivial for $n=0$. We next assume that the result is true for all $0\leq n\le m$ and show that
the result is also true for $n = m+1$. We consider two cases.

{\it Case 1: $F(z_m)=y$}. In this case we have $\mu_m=0$ and $\|F(z_m^\d) -y^\d\|\rightarrow 0$
as $\d\rightarrow 0$ by the continuity of $F$ and the induction hypothesis $z_m^\d \rightarrow z_m$. Thus
\begin{equation*}
\xi_{m+1}^\d-\xi_{m+1} =\zeta_m^\d-\zeta_m -\mu_m^\d L(z_m^\d)^* J_s^{\Y} (F(z_m^\d)-y^\d),
\end{equation*}
which together with the definition of $\mu_m^\d$ and the induction hypothesis $\zeta_m^\d \rightarrow \zeta_m$
implies that
\begin{equation*}
\|\xi_{m+1}^\d-\xi_{m+1}\| \le\|\zeta_m^\d-\zeta_m\| + C_0 \bar\mu_1 \|F(z_m^\d)-y^\d\|^{p-1}\rightarrow 0
\quad \mbox{ as } \d\rightarrow 0.
\end{equation*}
Consequently, by using the continuity of $\nabla \Theta^*$ we have $x_{m+1}^\d =\nabla \Theta^*(\xi_{m+1}^\d)\rightarrow
\nabla \Theta^*(\xi_{m+1}) =x_{m+1}$ as $\d\rightarrow 0$. Recall that
$$
\zeta_{m+1}^\d = \xi_{m+1}^\d +\lambda_{m+1}^\d (\xi_{m+1}^\d-\xi_m^\d), \qquad
z_{m+1}^\d = \nabla \Theta^*(\zeta_{m+1}^\d).
$$
We may use the condition $\lambda_{m+1}^\d \rightarrow \lambda_{m+1}$ to conclude that $\zeta_{m+1}^\d\rightarrow \zeta_{m+1}$
and $z_{m+1}^\d\rightarrow z_{m+1}$ as $\d \rightarrow 0$.

{\it Case 2: $F(z_m)\ne y$}. In this case we have $\|F(z_m^\d)-y^\d\|\geq\tau \d$ for small $\d>0$. Therefore
\begin{align*}
\mu_m^\d &= \min\left\{\frac{\bar \mu_0 \|F(z_m^\d)-y^\d\|^{p(s-1)}}{\|L(z_m^\d)^*J_s^{\Y} (F(z_m^\d)-y^\d)\|^p}, \bar\mu_1\right\} \|F(z_m^\d)-y^\d\|^{p-s},\\
\mu_m &= \min\left\{\frac{\bar \mu_0 \|F(z_m)-y\|^{p(s-1)}}{\|L(z_m)^*J_s^{\Y} (F(z_m)-y)\|^p}, \bar\mu_1\right\} \|F(z_m)-y\|^{p-s}.
\end{align*}
If $L(z_m)^*J_s^{\Y}(F(z_m)-y)\ne 0$, then, by the induction hypothesis on $z_m^\d$, it is easily seen that
$\mu_m^\d \rightarrow \mu_m$ as $\d\rightarrow 0$.
If $L(z_m)^*J_s^{\Y}(F(z_m)-y) = 0$, then $\mu_m= \bar \mu_1 \|F(z_m)-y\|^{p-s}$ and $\mu_m^\d = \bar \mu_1 \|F(z_m^\d)-y^\d\|^{p-s}$
for small $\d>0$. This again implies that $\mu_m^\d \rightarrow \mu_m$ as $\d\rightarrow 0$.
Consequently, by utilizing the continuity of $F$, $L$, $J_s^{\Y}$ and $\nabla \Theta^*$ and the induction hypotheses,
we can conclude that $\xi_{m+1}^\d\rightarrow \xi_{m+1}$, $x_{m+1}^\d\rightarrow x_{m+1}$, $\zeta_{m+1}^\d \rightarrow \zeta_{m+1}$
and $z_{m+1}^\d \rightarrow z_{m+1}$ as $\d\rightarrow 0$.
\end{proof}

We are now in a position to give the main convergence result on our method (\ref{TPGM}).

\begin{theorem}\label{T6.4}
Let $\X$ be reflexive, let $\Y$ be uniformly smooth, and let Assumption \ref{A1} and Assumption \ref{A2} hold.
Assume that $\tau>1$ and $\bar \mu_0>0$ are chosen to satisfy (\ref{c1}).  Assume also that the combination parameters
$\{\lambda_n^\d\}$ are chosen to depend continuously on $\d$ as $\d \rightarrow 0$ and satisfy \eqref{add ass1},
\eqref{condition} and \eqref{add ass3}. Let $n_\d$ be chosen according to the discrepancy principle \eqref{dp}.
Then there exists a solution $x_*$ of (\ref{sys}) in $B_{2\rho}(x_0)\cap \D(\Theta)$ such that
\begin{equation*}
\lim_{\d\rightarrow 0} \|x_{n_\d}^\d-x_*\|=0 \qquad \mbox{and} \qquad
\lim_{\d\rightarrow 0} D_{\xi_{n_\d}^\d}\Theta(x_*, x_{n_\d}^\d) =0.
\end{equation*}
If in addition $\N(L(x^\dag))\subset \N(L(x))$ for all $x\in B_{3\rho}(x_0)$, then $x_*=x^\dag$.
\end{theorem}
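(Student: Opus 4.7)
The plan is to combine the noise-free convergence (Theorem \ref{th noisefree}), the stability result (Lemma \ref{stability}), and the monotonicity from Proposition \ref{monotonicity} via a standard subsequence argument. Take an arbitrary sequence $\d_k\to 0$ and set $n_k := n_{\d_k}$. I would show that along every subsequence of $\{x_{n_k}^{\d_k}\}$ there is a sub-subsequence converging to the limit $x_*$ produced by Theorem \ref{th noisefree}; splitting into a bounded and an unbounded case for $\{n_k\}$ handles both alternatives, and consistency of the limit $x_*$ in the two cases delivers the full $\lim_{\d\to 0}$ statement.

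\textbf{Bounded case.} Suppose along a subsequence (still indexed by $k$) we have $\sup_k n_k<\infty$; passing to a further subsequence assume $n_k\equiv n$. The discrepancy principle gives $\|F(z_n^{\d_k})-y^{\d_k}\|\le\tau\d_k\to 0$, and Lemma \ref{stability} yields $z_n^{\d_k}\to z_n$, $\xi_n^{\d_k}\to\xi_n$, $x_n^{\d_k}\to x_n$. Taking the limit forces $F(z_n)=y$. I then invoke the freezing argument already used inside the proof of Theorem \ref{th noisefree}: when $F(z_n)=y$ one has $\mu_n=0$, the noise-free version of (\ref{condition}) forces $\lambda_n(\xi_n-\xi_{n-1})=0$, and consequently $\xi_{n+1}=\zeta_n=\xi_n$, $z_{n+1}=z_n$, $x_{n+1}=x_n$; iterating gives $x_m=x_n$ for all $m\ge n$, so the noise-free limit satisfies $x_*=x_n$. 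Hence $x_{n_k}^{\d_k}\to x_*$ and $D_{\xi_{n_k}^{\d_k}}\Theta(x_*,x_{n_k}^{\d_k})\to 0$ along the subsequence.

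\textbf{Unbounded case.} Now suppose $n_k\to\infty$ along a subsequence. Since $x_*\in B_{2\rho}(x_0)\cap\D(\Theta)$ is a solution of \eqref{sys}, Proposition \ref{monotonicity} gives, for any fixed $N$ and all $k$ with $n_k\ge N$,
\[
D_{\xi_{n_k}^{\d_k}}\Theta(x_*,x_{n_k}^{\d_k})\le D_{\xi_N^{\d_k}}\Theta(x_*,x_N^{\d_k}).
\]
Expanding the right-hand Bregman distance and applying Lemma \ref{stability} ($\xi_N^{\d_k}\to\xi_N$ and $x_N^{\d_k}\to x_N$) together with the weak lower semi-continuity of $\Theta$ (which produces $-\Theta(x_N^{\d_k})\le -\Theta(x_N)+o(1)$ in the direction matching the sign in the Bregman distance), I obtain
\[
\limsup_{k\to\infty} D_{\xi_N^{\d_k}}\Theta(x_*,x_N^{\d_k})\le D_{\xi_N}\Theta(x_*,x_N).
\]
Combining the two displays and then sending $N\to\infty$, Theorem \ref{th noisefree} yields $D_{\xi_N}\Theta(x_*,x_N)\to 0$, so $D_{\xi_{n_k}^{\d_k}}\Theta(x_*,x_{n_k}^{\d_k})\to 0$, and the $p$-convexity (\ref{pconvex}) then gives $\|x_{n_k}^{\d_k}-x_*\|\to 0$.

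The identification $x_*=x^\dag$ under $\N(L(x^\dag))\subset\N(L(x))$ is inherited directly from the second part of Theorem \ref{th noisefree}, once we observe that both cases above produce the same limit. The main subtlety I anticipate is the passage to the limit in $D_{\xi_N^{\d_k}}\Theta(x_*,x_N^{\d_k})$ at fixed $N$: because $\Theta$ may be non-smooth, only lower semi-continuity (not continuity) is available, so one has to arrange the one-sided estimate so that the sign of the $-\Theta(x_N^{\d_k})$ term cooperates with the liminf inequality. Once this is in place, the rest is straightforward bookkeeping of already-established monotonicity, stability, and noise-free convergence.
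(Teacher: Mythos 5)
Your proposal is correct and follows essentially the same route as the paper: the same two-case split (bounded versus unbounded $n_\d$), with Lemma \ref{stability} plus the discrepancy principle handling the bounded case, and the monotonicity from Proposition \ref{monotonicity} combined with lower semi-continuity of $\Theta$ and Theorem \ref{th noisefree} handling the unbounded case. The only cosmetic difference is that in the bounded case the paper deduces $F(x_{\hat n})=y$ and uses monotonicity of $D_{\xi_n}\Theta(x_{\hat n},x_n)$ to freeze the noise-free iteration, whereas you deduce $F(z_{\hat n})=y$ and invoke the freezing argument from the proof of Theorem \ref{th noisefree}; both are valid and lead to the same identification $x_*=x_{\hat n}$.
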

\begin{proof}
Let $x_*$ be the solution of (\ref{sys}) determined in Theorem \ref{th noisefree}.
We complete the proof by considering two cases.

{\it Case 1}: Assume first that $\{y^{\d_l}\}$ is a sequence satisfying $\|y^{\d_l}-y\|\le \d_l$
with $\d_l\rightarrow 0$ such that $n_l:=n_{\d_l}\rightarrow \hat n$ as $l\rightarrow \infty$ for some finite
integer $\hat n$. We may assume $n_l = \hat n$ for all $l$. According to Lemma \ref{stability}, we have
$$
\zeta_{n_l}^{\d_l} \rightarrow \zeta_{\hat n}, \quad z_{n_l}^{\d_l} \rightarrow z_{\hat n}, \quad
\xi_{n_l}^{\d_l} \rightarrow \xi_{\hat n} \quad \mbox{ and } \quad x_{n_l}^{\d_l} \rightarrow x_{\hat n} \quad
\mbox{as } l\rightarrow \infty.
$$
By the definition of $n_l: = n_{\d_l}$, we have
\begin{equation*}
\|F(z_{n_l}^{\d_l})-y^{\d_l}\|\le \tau \d_l.
\end{equation*}%%
By using the similar argument for deriving (\ref{10.12.5}) we have
$$
\|F(x_{n_l}^{\d_l})-F(z_{n_l}^{\d_l})\| \le C \|F(z_{n_l}^{\d_l})-y^{\d_l}\|
$$
for some universal constant $C$. Thus
$$
\|F(x_{n_l}^{\d_l})-y^{\d_l}\| \le (1+C) \|F(z_{n_l}^{\d_l})-y^{\d_l}\| \le (1+C) \tau \d_l.
$$
Taking $l\rightarrow \infty$ and using the continuity of $F$ gives $F(x_{\hat n}) =y$. Thus $x_{\hat n}$ is a solution
of (\ref{sys}) in $B_{2\rho}(x_0)\cap \D(\Theta)$. By the monotonicity of $\{D_{\xi_n}\Theta(x_{\hat n}, x_n)\}$
with respect to $n$, we then obtain
$$
D_{\xi_n}\Theta(x_{\hat n}, x_n) \le D_{\xi_{\hat n}}\Theta(x_{\hat n}, x_{\hat n})=0, \quad \forall n\ge \hat n.
$$
Therefore $x_n = x_{\hat n}$ for all $n\ge \hat n$. Since Theorem \ref{th noisefree} shows that $x_n \rightarrow x^*$ as $n\rightarrow \infty$,
we must have $x_{\hat n} = x^*$ and thus $x_{n_l}^{\d_l} \rightarrow x^*$ as $l\rightarrow \infty$.
This together with the lower semi-continuity of $\Theta$ shows that
\begin{align*}
0 & \le \liminf_{l\rightarrow \infty} D_{\xi_{n_l}^{\d_l}}\Theta(x_*, x_{n_l}^{\d_l})
\le \limsup_{k\rightarrow \infty} D_{\xi_{n_l}^{\d_l}}\Theta(x_*, x_{n_l}^{\d_l})\\
& \leq \Theta(x_*) -\liminf_{l\rightarrow \infty} \Theta(x_{n_l}^{\d_l})
- \lim_{l\rightarrow \infty} \l \xi_{n_l}^{\d_l}, x_*-x_{n_l}^{\d_l}\r\\
& \le \Theta(x_*)-\Theta(x_*)=0,.
\end{align*}
Therefore  $\lim_{l\rightarrow \infty} D_{\xi_{n_l}^{\d_l}}\Theta(x_*, x_{n_l}^{\d_l})=0$.

{\it Case 2}: Assume $\{y^{\d_l}\}$ is a sequence satisfying $\|y^{\d_l}-y\|\le \d_l$
with $\d_l\rightarrow 0$ such that $n_l:=n_{\d_l}\rightarrow \infty$ as $l\rightarrow \infty$.
Let $n$ be any fixed integer, then $n_l>n$ for large $l$. It then follows from Lemma \ref{monotonicity} that
\begin{equation*}
D_{\xi_{n_l}^{\d_l}} \Theta(x_*, x_{n_l}^{\d_l}) \le D_{\xi_n^{\d_l}} \Theta (x_*, x_n^{\d_l})
=\Theta(x_*) -\Theta(x_n^{\d_l}) -\l \xi_n^{\d_l}, x_*-x_n^{\d_l}\r.
\end{equation*}
By using Lemma \ref{stability} and the lower semi-continuity of $\Theta$ we obtain
\begin{equation*}\label{n}
\begin{aligned}
0 & \le \liminf_{l\rightarrow \infty} D_{\xi_{n_l}^{\d_l}} \Theta (x_*, x_{n_l}^{\d_l})
\le \limsup_{l\rightarrow \infty} D_{\xi_{n_l}^{\d_l}} \Theta (x_*, x_{n_l}^{\d_l})\\
& \le \Theta(x_*) -\liminf_{l\rightarrow \infty} \Theta(x_n^{\d_l}) -\lim_{l\rightarrow \infty} \l \xi_n^{\d_l}, x_*-x_n^{\d_l}\r \\
& \le \Theta(x_*) -\Theta(x_n) -\l \xi_n, x_*-x_n\r\\
&= D_{\xi_n}\Theta(x_*, x_n)\,.
\end{aligned}
\end{equation*}
Since $n$ can be arbitrary and Theorem \ref{th noisefree} implies that $D_{\xi_n}\Theta(x_*, x_n)\rightarrow 0$
as $n\rightarrow \infty$, we therefore have $\lim_{l\rightarrow \infty} D_{\xi_{n_l}^{\d_l}} \Theta (x_*, x_{n_l}^{\d_l})=0$
and hence $\lim_{l\rightarrow \infty} \|x_{n_l}^{\d_l} - x_*\| =0$.
\end{proof}

\subsection{DBTS: the choice of $\lambda_n^\d$}

In this section we will discuss the choice of the combination parameter $\lambda_n^\d$ which leads to a convergent
regularization method.

In Remark \ref{remark} we have briefly discussed how to choose the combination parameter leading to the formulae
(\ref{choice lambda}) and (\ref{lambda2}). However, these choices of $\lambda_n^\d$ decrease to $0$ as $\d \rightarrow 0$,
and consequently the acceleration effect will decrease as $\d\rightarrow 0$ as well. Therefore, it is necessary to
find out other strategy for generating $\lambda_n^\d$ such that (\ref{condition}) and (\ref{add ass3}) hold.
We will adapt the discrete backtracking search (DBTS) algorithm introduced in \cite{hr17} to our situation.
To this end, we take a function $q:\mathbb{N}\to \mathbb{N}$ that is non-increasing and
\begin{align}\label{qn}
\sum_{i=0}^\infty q(i)<\infty.
\end{align}
The DBTS algorithm for choosing the combination parameter $\lambda_n^\d$ in our method (\ref{TPGM})
is formulated in Algorithm \ref{DBTSAlgorithm} below. Comparing with the one in \cite{hr17}, there are two modifications:
The first modification is the definition of $\beta_n$ in which we place $\beta_n(i) \le n/(n+\a)$ instead of $\beta_n(i) \le 1$;
this modification gives the possibility to speed up convergence by making use of the Nesterov's acceleration strategy. The second modification
is in the ``\textbf{Else}" part, where instead of setting $\lambda_n^\d =0$ we calculate $\lambda_n^\d$ by (\ref{lambda2}); this
modification can provide additional acceleration to speed up convergence.

\begin{algorithm}
\begin{algorithmic}
\STATE   \textbf{Given} $\xi_{n}^\d$, $\xi_{n-1}^\d$, $\tau$, $\delta$, $c_1$, $\nu$, $q:\mathbb{N}\rightarrow\mathbb{N}$,
$i_{n-1}^\d\in\mathbb{N}$, $j_{\max}\in\mathbb{N}$
\STATE  \textbf{Set } $\gamma_1 = c_1 p^* (2c_0)^{p^*-1}/\nu$
\STATE \textbf{Calculate} $\|\xi_n^\d-\xi_{n-1}^\d\|$ and define, with $\a\ge 3$,
\begin{align*}
\beta_n(i) = \min\left\{\frac{q(i)}{\|\xi_n^\d-\xi_{n-1}^\d\|}, \frac{p^*(2 c_0)^{p^*}\rho^p}{4 \|\xi_n^\d - \xi_{n-1}^\d\|^{p^*}}, \frac{n}{n+\a}\right\}.
\end{align*}
\STATE \textbf{For} $j=1,\ldots, j_{\max}$
\STATE    \qquad Set $\lambda_n^\d = \beta_n(i_{n-1}^\d+j)$;
\STATE    \qquad Calculate $\zeta_n^\d = \xi_{n}^\d +\lambda_n^\d (\xi_n^\d-\xi_{n-1}^\d)$ and $z_n^\d = \nabla \Theta^*(\zeta_n^\d)$;
\STATE    \qquad Calculate $\mu_n^\d$ by \eqref{step};
\STATE    \qquad \textbf{If} $\|y^\d-F(z_n^\d)\| \le \tau\d$
\STATE    \qquad \qquad $\lambda_n^\d = 0$;
\STATE    \qquad\qquad $i_n^\d = i_{n-1}^\d+j$;
\STATE    \qquad\qquad\textbf{break};
\STATE    \qquad\textbf{Else if} $(\lambda_n^\d+\left(\lambda_n^\d\right)^{p^*}) \|\xi_{n}^\d-\xi_{n-1}^\d\|^{p^*}\le \gamma_1 \mu_n^\d \|F(z_n^\d)-y^\d\|^s$
\STATE    \qquad\qquad $i_n^\d = i_{n-1}^\d + j$;
\STATE    \qquad\qquad\textbf{break};
\STATE    \qquad\textbf{Else}
\STATE    \qquad \qquad calculate $\lambda_n^\d$ by (\ref{lambda2});
\STATE    \qquad \qquad $i_n^\d = i_{n-1}^\d+j_{\max}$;
\STATE    \qquad \textbf{End If}
\STATE \textbf{End For}
\STATE \textbf{Output:} $\lambda_n^\d$, $i_n^\d$
\end{algorithmic}
\caption{Discrete backtracking search (DBTS-$\Theta$) algorithm for $\lambda_n^\d$, $n\geq 1$.}
\label{DBTSAlgorithm}
\end{algorithm}

We need to show that the combination parameter $\lambda_n^\d$ chosen by Algorithm \ref{DBTSAlgorithm} satisfies
(\ref{add ass1}), \eqref{condition} and \eqref{add ass3}. From Algorithm \ref{DBTSAlgorithm}
it is easily seen that $0\le \lambda_n^\d \le \beta(i_n^\d)$. Therefore (\ref{add ass1}) holds automatically.
When $\|F(z_n^\d)-y^\d\| \le \tau \d$, Algorithm \ref{DBTSAlgorithm} gives $\lambda_n^\d =0$ which ensures (\ref{condition})
hold. When $\|F(z_n^\d)-y^\d\| >\tau \d$, Algorithm \ref{DBTSAlgorithm}
either finds $\lambda_n^\d$ of the form $\beta_n(i_n^\d)$ to satisfy (\ref{condition}) or gives $\lambda_n^\d$
by (\ref{lambda2}) which again satisfies (\ref{condition}). Thus Algorithm \ref{DBTSAlgorithm} always produces
a $\lambda_n^\d$ satisfying (\ref{condition}).

For the noise-free case, Algorithm \ref{DBTSAlgorithm} either produce $\lambda_n=\beta_n(i_n)$ or $\lambda_n = 0$. Thus
$0\le \lambda_n \le \beta_n(i_n)$. By the definition of $i_n$ one can see that $i_{n}\ge i_{n-1}+1$ and thus $i_n\geq n$.
Therefore, by using $\beta_n(i) \le q(i)/\|\xi_n-\xi_{n-1}\|$ and the monotonicity of $q$, we have
\begin{align*}
\sum_{n=0}^\infty\lambda_n\|\xi_n-\xi_{n-1}\|
&\le\sum_{n=0}^\infty\beta_n(i_n)\|\xi_n-\xi_{n-1}\| \le \sum_{n=0}^\infty q(i_n)
\le  \sum_{n=0}^\infty q(n) <\infty.
\end{align*}
Therefore \eqref{add ass3} is satisfied.

We can not use Theorem \ref{T6.4} to conclude the regularization property of the two-point method (\ref{TPGM}) when the
combination parameter is determined by Algorithm \ref{DBTSAlgorithm} because the produced parameter $\lambda_n^\d$ is not
necessarily continuously dependent on $\d$ as $\d\rightarrow 0$. In fact, $\lambda_n^\d$ may have many different cluster points
as $\d\rightarrow 0$. Using these different cluster points as the combination parameter in (\ref{TPGM_E}) may lead to many
different iterative sequences for noise-free case. We need to consider all these possible iterative sequences altogether.
We will use $\Gamma_{\bar \mu_0, \bar \mu_1, \nu, q}(\xi_0, x_0)$ to denote the set consisting of all the iterative sequences
$\{(\xi_n, x_n)\} \subset \X^*\times \X$ defined by (\ref{TPGM_E}), where the combination parameters $\{\lambda_n\}$
are chosen to satisfy
\begin{align}\label{11.6.0}
\frac{1}{p^*(2 c_0)^{p^*-1}} \left(\lambda_n^{p^*}+\lambda_n\right) \|\xi_n-\xi_{n-1}\|^{p^*} \le \frac{c_1}{\nu} \mu_n \|F(z_n)-y\|^s
\end{align}
and
\begin{align}\label{11.6.00}
0\le \lambda_n \le \min\left\{\frac{q(i_n)}{\|\xi_n-\xi_{n-1}\|}, \frac{p^*(2c_0)^{p^*} \rho^p}{4 \|\xi_n-\xi_{n-1}\|^{p^*}},
\frac{n}{n+\a} \right\}
\end{align}
with a sequence $\{i_n\}$ of integers satisfying $i_0=0$ and $1\le i_n-i_{n-1} \le j_{\max}$ for all $n$.

Given a sequence $\{(\xi_n, x_n)\}\in \Gamma_{\bar \mu_0, \bar \mu_1, \nu, q}(\xi_0, x_0)$, it is easy to check that the corresponding
combination parameters $\{\lambda_n\}$ satisfy (\ref{add ass1}), (\ref{condition}) and (\ref{add ass3}).
Therefore we may use Theorem \ref{th noisefree} to conclude the convergence of $\{x_n\}$. In the following we will show a uniform
convergence result.

\begin{proposition}\label{uniform}
Let all the conditions in Theorem \ref{th noisefree} hold. If $\N(L(x^\dag)) \subset \N(L(x))$ for all $x \in B_{2\rho}(x_0)\cap \D(\Theta)$,
then for any $\eps>0$ there is an integer $n(\eps)$ such that for any sequence $\{(\xi_n, x_n)\}
\in \Gamma_{\bar \mu_0, \bar \mu_1, \nu, q}(\xi_0, x_0)$ there holds $D_{\xi_n}\Theta(x^\dag, x_n) <\eps$ for all $n\ge n(\eps)$.
\end{proposition}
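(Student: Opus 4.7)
The strategy is proof by contradiction combined with a compactness/diagonalization argument to extract a limiting iterative sequence inside $\Gamma_{\bar \mu_0, \bar \mu_1, \nu, q}(\xi_0, x_0)$ and then invoke Theorem \ref{th noisefree} on the limit to obtain a contradiction. Suppose the conclusion fails; then there exist $\eps_0 > 0$, a family $\{(\xi_n^{(k)}, x_n^{(k)})\}_n \in \Gamma_{\bar \mu_0, \bar \mu_1, \nu, q}(\xi_0, x_0)$ indexed by $k$, and integers $N_k \to \infty$ with $D_{\xi_{N_k}^{(k)}} \Theta(x^\dag, x_{N_k}^{(k)}) \ge \eps_0$. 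Since each such sequence satisfies the noise-free monotonicity estimate (the analogue of \eqref{10.11.2} proved as in Proposition \ref{monotonicity} using \eqref{11.6.0}), this forces $D_{\xi_n^{(k)}} \Theta(x^\dag, x_n^{(k)}) \ge \eps_0$ for all $n \le N_k$.

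Next I extract a limit sequence in $\Gamma_{\bar \mu_0, \bar \mu_1, \nu, q}(\xi_0, x_0)$. Let $\lambda_n^{(k)}$ and $i_n^{(k)}$ denote the combination parameters and integer indices generating the $k$-th sequence. Each $\lambda_n^{(k)}$ lies in $[0, n/(n+\a)] \subset [0,1]$, and by $i_0^{(k)} = 0$ together with $1 \le i_n^{(k)} - i_{n-1}^{(k)} \le j_{\max}$ each $i_n^{(k)}$ lies in the finite set $\{n, \ldots, n j_{\max}\}$. A standard diagonal extraction therefore produces a subsequence (still indexed by $k$) along which, for every $n$, $i_n^{(k)}$ is eventually constantly equal to some integer $i_n^*$ with $i_n^* \ge n$, and $\lambda_n^{(k)} \to \lambda_n^*$ for some $\lambda_n^* \in [0,1]$. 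Mimicking the inductive continuity argument in Lemma \ref{stability}, with the parameter $\d$ there replaced by $\lambda_n$, I then show inductively that $\zeta_n^{(k)} \to \zeta_n^*$, $z_n^{(k)} \to z_n^*$, $\xi_n^{(k)} \to \xi_n^*$, and $x_n^{(k)} \to x_n^*$ in the appropriate norms, where the limits are the iterates produced by \eqref{TPGM_E} with the parameters $\{\lambda_n^*\}$. Passing to the limit in \eqref{11.6.0} and \eqref{11.6.00} (using $i_n^* \ge n$ and the monotonicity of $q$ to control the upper bound on $\lambda_n^*$) confirms $\{(\xi_n^*, x_n^*)\} \in \Gamma_{\bar \mu_0, \bar \mu_1, \nu, q}(\xi_0, x_0)$.

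For each fixed $n$, we have $n \le N_k$ for all large $k$, hence $D_{\xi_n^{(k)}} \Theta(x^\dag, x_n^{(k)}) \ge \eps_0$. Writing the Bregman distance as $\Theta(x^\dag) - \Theta(x_n^{(k)}) - \l \xi_n^{(k)}, x^\dag - x_n^{(k)} \r$, and using the strong convergence of $\xi_n^{(k)}$ and $x_n^{(k)}$ together with the (weak) lower semicontinuity of $\Theta$, one gets $\limsup_k D_{\xi_n^{(k)}} \Theta(x^\dag, x_n^{(k)}) \le D_{\xi_n^*} \Theta(x^\dag, x_n^*)$, so $D_{\xi_n^*} \Theta(x^\dag, x_n^*) \ge \eps_0$ for every $n$. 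This contradicts Theorem \ref{th noisefree} applied to $\{(\xi_n^*, x_n^*)\}$, which under the source condition $\N(L(x^\dag)) \subset \N(L(x))$ yields $D_{\xi_n^*} \Theta(x^\dag, x_n^*) \to 0$.

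The principal obstacle is justifying the inductive continuity: the step-size formula \eqref{step} contains a $\min$ and a possibly vanishing denominator, so continuity of $\lambda_n \mapsto \mu_n$ is not automatic. This is handled exactly as in Lemma \ref{stability}: if $L(z_n^*)^* J_s^{\Y}(F(z_n^*) - y) = 0$, then the first argument of the $\min$ blows up along the subsequence, so $\mu_n^{(k)} \to \bar \mu_1 \|F(z_n^*) - y\|^{p-s} = \mu_n^*$ regardless of which branch is selected at stage $k$; otherwise both arguments are continuous on a neighborhood. A secondary technical point is that Theorem \ref{th noisefree} requires \eqref{add ass3} for the limit sequence, which follows from $\lambda_n^* \|\xi_n^* - \xi_{n-1}^*\| \le q(i_n^*) \le q(n)$ combined with \eqref{qn}, exactly as in the verification immediately after Algorithm \ref{DBTSAlgorithm}.
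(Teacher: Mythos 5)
Your proof is correct and follows essentially the same route as the paper's: a contradiction argument, a diagonal extraction of a limiting sequence in $\Gamma_{\bar \mu_0, \bar \mu_1, \nu, q}(\xi_0, x_0)$, a stability argument in the spirit of Lemma \ref{stability} to pass to the limit and verify membership in $\Gamma_{\bar \mu_0, \bar \mu_1, \nu, q}(\xi_0, x_0)$, and an application of Theorem \ref{th noisefree} to the limit sequence. The only (harmless) difference is the endgame: you propagate the lower bound $\eps_0$ backward via monotonicity and use lower semicontinuity of $\Theta$ to conclude $D_{\xi_n^*}\Theta(x^\dag, x_n^*)\ge \eps_0$ for every $n$, whereas the paper fixes a single good index $\hat n$ for the limit sequence and pushes the resulting bound forward to $n_{\hat \ell}$ by monotonicity.
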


\begin{proof}
Assume that the result is not true. Then there is an $\varepsilon_0>0$ such that for any $\ell\ge 1$
there exist $\{(\xi_n^{(\ell)}, x_n^{(\ell)})\}\in \Gamma_{\bar \mu_0, \bar \mu_1, \nu, q}(\xi_0, x_0)$ and $n_\ell>\ell$ such that
\begin{equation}\label{eq:6.11.1}
D_{\xi_{n_\ell}^{(\ell)}} \Theta (x^\dag, x_{n_\ell}^{(\ell)}) \ge \varepsilon_0.
\end{equation}
We will construct, for each $n=0, 1, \cdots$, a strictly increasing subsequence $\{\ell_{n,k}\}$ of positive integers and
$(\hat \xi_n, \hat x_n) \in \X^* \times \X$ such that

\begin{enumerate}

\item[(i)] $\{(\hat \xi_n, \hat x_n)\} \in \Gamma_{\bar \mu_0, \bar \mu_1, \nu, q}(\xi_0, x_0)$;

\item[(ii)] for each fixed $n$ there hold $x_n^{(\ell_{n,k})}\rightarrow \hat x_n$, $\Theta(x_n^{(\ell_{n,k})}) \rightarrow
\Theta(\hat x_n)$ and $\xi_n^{(\ell_{n,k})}\rightarrow \hat \xi_n$ as $k\rightarrow \infty$.
\end{enumerate}

\noindent
Assume that the above construction is available, we will derive a contradiction. According to (i), we may use Theorem \ref{th noisefree}
to conclude that $D_{\hat \xi_n} \Theta(x^\dag, \hat x_n) \rightarrow 0$ as $n\rightarrow \infty$. Thus we can pick a large
integer $\hat n$ such that
$$
D_{\hat \xi_{\hat n}}\Theta(x^\dag, \hat x_{\hat n})< \varepsilon_0/2.
$$
Thus
\begin{align}\label{2018.3.16.1}
\varepsilon_0/2 & > \left(D_{\hat \xi_{\hat n}} \Theta(x^\dag, \hat x_{\hat n})
-D_{\xi_{\hat n}^{(\ell_{\hat n, k})}} \Theta(x^\dag, x_{\hat n}^{(\ell_{\hat n, k})})\right)
+D_{\xi_{\hat n}^{(\ell_{\hat n, k})}} \Theta(x^\dag, x_{\hat n}^{(\ell_{\hat n, k})})   \nonumber \\
& = -D_{\xi_{\hat n}^{(\ell_{\hat n, k})}} \Theta(\hat x_{\hat n}, x_{\hat n}^{(\ell_{\hat n, k})})
+ \l \hat \xi_{\hat n}-\xi_{\hat n}^{(\ell_{\hat n, k})}, \hat x_{\hat n} -x^\dag\r \nonumber \\
& \quad \, + D_{\xi_{\hat n}^{(\ell_{\hat n, k})}} \Theta(x^\dag, x_{\hat n}^{(\ell_{\hat n, k})}).
\end{align}
According to (ii), we have
$$
-D_{\xi_{\hat n}^{(\ell_{\hat n, k})}} \Theta(\hat x_{\hat n}, x_{\hat n}^{(\ell_{\hat n, k})})
+ \l \hat \xi_{\hat n}-\xi_{\hat n}^{(\ell_{\hat n, k})}, \hat x_{\hat n} -x^\dag\r \rightarrow 0 \quad
\mbox{as } k\rightarrow \infty.
$$
Hence we can find ${\hat k}$ with $\hat \ell := \ell_{\hat n, \hat k}\ge \hat n$ such that
$$
-D_{\xi_{\hat n}^{(\hat \ell)}} \Theta(\hat x_{\hat n}, x_{\hat n}^{(\hat \ell)})
+ \l \hat \xi_{\hat n}-\xi_{\hat n}^{(\hat \ell)}, \hat x_{\hat n} -x^\dag\r \ge -\eps_0/2.
$$
Consequently, it follows from (\ref{2018.3.16.1}) that
$$
D_{\xi_{\hat n}^{(\hat \ell)}} \Theta(x^\dag, x_{\hat n}^{(\hat \ell)})<\eps_0.
$$
Note that $n_{\hat \ell}>\hat \ell = \ell_{\hat n, \hat k} \ge \hat n$. Thus, by the monotonicity
of $\{D_{\xi_{n}^{(\hat \ell)}} \Theta(x^\dag, x_{n}^{(\hat \ell)})\}$ with respect to $n$, we can obtain
$$
D_{\xi_{n_{\hat \ell}}^{(\hat \ell)}} \Theta(x^\dag, x_{n_{\hat \ell}}^{(\hat \ell)})
\le D_{\xi_{\hat n}^{(\hat \ell)}} \Theta(x^\dag, x_{\hat n}^{(\hat \ell)}) < \varepsilon_0
$$
which is a contradiction to (\ref{eq:6.11.1}) with $\ell=\hat \ell$.

We turn to the construction of $\{\ell_{n,k}\}$ and $(\hat \xi_n, \hat x_n)$, for each $n=0, 1, \cdots$, such that (i) and (ii) hold.
We use a diagonal argument. For $n=0$, we take $(\hat \xi_0, \hat x_0)=(\xi_0, x_0)$ and $\ell_{0,k}=k$ for all $k$.
Since $\xi_0^{(k)}=\xi_0$ and $x_0^{(k)}=x_0$, (ii) holds automatically for $n=0$.

Next, assume that  we have constructed $\{\ell_{n,k}\}$ and $(\hat \xi_n, \hat x_n)$ for all $0\le n\le m$.
We will construct $\{\ell_{m+1, k}\}$ and $(\hat \xi_{m+1}, \hat x_{m+1})$. Recall the combination parameter $\lambda_m^{(\ell_{m,k})}$
and the integer $i_m^{(\ell_{m,k})}$ involved in the definition of $(\xi_{m+1}^{(\ell_{m,k})}, x_{m+1}^{(\ell_{m,k})})$. We have
$$
0\le \lambda_m^{(\ell_{m,k})}\le \frac{m}{m+\a} \quad \mbox{ and } \quad 0 \le i_m^{ (\ell_{m,k})} \le m j_{\max}
$$
for all $k$. Thus, we may pick a subsequence of $\{\ell_{m,k}\}$, denoted by $\{\ell_{m+1, k}\}$, such that
\begin{align}\label{11.7.1}
\lim_{k\rightarrow \infty} \lambda_m^{(\ell_{m+1,k})} =\hat \lambda_m \quad \mbox{ and } \quad
i_m^{(\ell_{m+1,k})} = \hat i_m \quad \forall k
\end{align}
for some number $0\le \hat \lambda_m \le m/(m+\a)$ and some integer $\hat i_m$. We define
$$
\hat \zeta_m = \hat \xi_m + \hat \lambda_m (\hat \xi_m - \hat \xi_{m-1}), \qquad \hat z_m = \nabla \Theta^*(\hat \zeta_m).
$$
By the induction hypothesis, (\ref{11.7.1}) and the continuity of $\nabla \Theta^*$ we have
\begin{align}\label{11.7.2}
\zeta_m^{(\ell_{m+1,k})} \rightarrow \hat \zeta_m \quad \mbox{ and } \quad
z_m^{(\ell_{m+1,k})} \rightarrow \hat z_m  \quad \mbox{ as } k \rightarrow \infty.
\end{align}
We define
$$
\hat \mu_m = \left\{ \begin{array}{lll}
\min\left\{\frac{\bar \mu_0 \|F(\hat z_m)-y\|^{p(s-1)}}{\|L(\hat z_m)^* J_s^{\Y}(F(\hat z_m)-y)\|^p}, \mu_1\right\} \|F(\hat z_m)-y\|^{p-s}
& \mbox{ if } F(\hat z_m) \ne y,\\
0 & \mbox{ if } F(\hat z_m) = y.
\end{array}\right.
$$
By using (\ref{11.7.2}), the continuity of $F$, $L$ and $J_s^{\Y}$, and the similar argument in the proof of Lemma \ref{stability},
we can show that
\begin{align}\label{11.7.3}
\mu_m^{(\ell_{m+1,k})}\rightarrow \hat \mu_m  \quad \mbox{as } k\rightarrow \infty.
\end{align}
We next define
\begin{align*}
\hat \xi_{m+1} = \hat \zeta_m - \hat \mu_m L(\hat z_m)^* J_s^{\Y} (F(\hat z_m)-y) \quad \mbox{and} \quad
\hat x_{m+1} = \nabla \Theta^*(\hat \xi_{m+1}).
\end{align*}
By using (\ref{11.7.2}), (\ref{11.7.3}), and the continuity of $F$, $L$, $J_s^{\Y}$ and $\nabla \Theta^*$,
it follows immediately that
\begin{align}\label{11.7.4}
\xi_{m+1}^{(\ell_{m+1,k})} \rightarrow \hat \xi_{m+1} \quad \mbox{ and } \quad x_{m+1}^{(\ell_{m+1,k})}\rightarrow \hat x_{m+1}
\quad \mbox{ as } k\rightarrow \infty.
\end{align}
By the lower semi-continuity of $\Theta$ we then have
\begin{align}\label{11.7.5}
\Theta(\hat x_{m+1}) \le \liminf_{k\rightarrow \infty} \Theta(x_{m+1}^{(\ell_{m+1,k})}).
\end{align}
On the other hand, by the definition of $x_{m+1}^{(\ell_{m+1,k})}$ we have
$$
x_{m+1}^{(\ell_{m+1,k})} = \arg\min_{x\in \X} \left\{ \Theta(x) - \l \xi_{m+1}^{(\ell_{m+1,k})}, x\r \right\}.
$$
Thus
$$
\Theta(x_{m+1}^{(\ell_{m+1,k})}) - \l \xi_{m+1}^{(\ell_{m+1,k})}, x_{m+1}^{(\ell_{m+1,k})}\r
\le \Theta(\hat x_{m+1}) - \l \xi_{m+1}^{(\ell_{m+1,k})}, \hat x_{m+1} \r.
$$
Therefore, by virtue of (\ref{11.7.4}), we have
\begin{align*}
\limsup_{k\rightarrow \infty} \Theta(x_{m+1}^{(\ell_{m+1,k})})
& \le \Theta(\hat x_{m+1})+\limsup_{k\rightarrow \infty} \l \xi_{m+1}^{(\ell_{m+1,k})}, x_{m+1}^{(\ell_{m+1,k})} -\hat x_{m+1}\r \\
& = \Theta(\hat x_{m+1}).
\end{align*}
This together with (\ref{11.7.5}) implies that
\begin{align}\label{11.7.6}
\lim_{k\rightarrow \infty} \Theta(x_{m+1}^{(\ell_{m+1,k})}) = \Theta(\hat x_{m+1}).
\end{align}
We thus complete the construction of $\{\ell_{m+1,k}\}$ and $(\hat \xi_{m+1}, \hat x_{m+1})$.

We still need to show that $\hat \lambda_m$ satisfies the requirements specified in the definition of
$\Gamma_{\bar \mu_0, \bar \mu_1, \nu, q}(\xi_0, x_0)$ in order to guarantee that the constructed sequence
is indeed in $\Gamma_{\bar \mu_0, \bar \mu_1, \nu, q}(\xi_0, x_0)$.
Recall the definition of the sequences in $\Gamma_{\bar\mu_0, \bar\mu_1, \nu, q}(\xi_0, x_0)$ we have
\begin{align*}
& \frac{1}{p^*(2 c_0)^{p^*-1}} \left(\left(\lambda_m^{(\ell_{m+1, k})}\right)^{p^*}+\lambda_m^{(\ell_{m+1, k})}\right)
\|\xi_m^{(\ell_{m+1, k})}-\xi_{m-1}^{(\ell_{m+1, k})}\|^{p^*} \\
& \le \frac{c_1}{\nu} \mu_m^{(\ell_{m+1, k})} \|F(z_m^{(\ell_{m+1, k})})-y\|^s
\end{align*}
and
$$
0\le \lambda_m^{(\ell_{m+1, k})} \le \min\left\{\frac{q(i_m^{(\ell_{m+1, k})})}{\|\xi_m^{(\ell_{m+1, k})}-\xi_{m-1}^{(\ell_{m+1, k})}\|},
\frac{p^*(2c_0)^{p^*} \rho^p}{4 \|\xi_m^{(\ell_{m+1, k})}-\xi_{m-1}^{(\ell_{m+1, k})}\|^{p^*}}, \frac{m}{m+\a}\right\}
$$
with $1\le i_m^{(\ell_{m+1, k})}-i_{m-1}^{(\ell_{m+1, k})} \le j_{\max}$ for all $k$. By using (\ref{11.7.1}), (\ref{11.7.2})
and (\ref{11.7.3}), we may take $k\rightarrow \infty$ in the above two inequalities to conclude that
\begin{align*}
\frac{1}{p^*(2 c_0)^{p^*-1}} \left(\hat \lambda_m^{p^*}+ \hat \lambda_m\right)
\|\hat \xi_m-\hat \xi_{m-1}\|^{p^*}\
\le \frac{c_1}{\nu} \hat \mu_m \|F(\hat z_m)-y\|^s
\end{align*}
and
$$
0\le \hat \lambda_m \le \min\left\{\frac{q(\hat i_m)}{\|\hat \xi_m-\hat \xi_{m-1}\|},
\frac{p^*(2c_0)^{p^*} \rho^p}{4 \|\hat \xi_m-\hat \xi_{m-1}\|^{p^*}}, \frac{m}{m+\a}\right\}
$$
Furthermore, we can assume that $\{\ell_{m,k}\}$ was taken so that $i_{m-1}^{(\ell_{m,k})} = \hat i_{m-1}$ for all $k$ for some
integer $\hat i_{m-1}$. Recall that $\{\ell_{m+1, k}\}$ is a subsequence of $\{\ell_{m,k}\}$, we must have
$1\le \hat i_m - \hat i_{m-1} =  i_m^{(\ell_{m+1, k})}-i_{m-1}^{(\ell_{m+1, k})} \le j_{\max}$. Note that $i_0^{(\ell)}=0$
for all $\ell$, we also have $\hat i_0 =0$. The proof is therefore complete.
\end{proof}

\begin{lemma}\label{stability2}
Let $\X$ be reflexive, let $\Y$ be uniformly smooth, and let Assumption \ref{A1} and Assumption \ref{A2} hold.
Let $\tau>1$ and $\bar \mu_0>0$ be chosen to satisfy (\ref{c1}). Let $\{y^{\d_l}\}$ be a sequence of noisy data
satisfying $\|y^{\d_l}-y\| \le \d_l$ with $\d_l\rightarrow 0$ as $l\rightarrow \infty$. Assume that the combination parameters
$\{\lambda_n^{\d_l}\}$ are chosen by Algorithm \ref{DBTSAlgorithm} with $i_0^{\d_l} =0$. Then, for any $n\ge 0$, by taking a
subsequence of $\{y^{\d_l}\}$ if necessary, there is a sequence $\{(\xi_k, x_k)\} \in \Gamma_{\bar \mu_0, \bar \mu_1, \nu, q}(\xi_0, x_0)$
such that
\begin{equation*}
x_k^{\d_l} \rightarrow x_k, \quad \xi_k^{\d_l} \rightarrow \xi_k
\quad \mbox{and} \quad \Theta(x_k^{\d_l})\rightarrow \Theta(x_k) \quad \textrm{as } l\rightarrow \infty
\end{equation*}
for all $0\le k\le n$.
\end{lemma}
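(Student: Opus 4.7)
The plan is to argue by induction on $n$, adapting the diagonal-subsequence construction from the proof of Proposition \ref{uniform} with $\{y^{\d_l}\}$ playing the role of the index $\ell$ used there. The base case $n=0$ is immediate since $\xi_0^{\d_l}=\xi_0$ and $x_0^{\d_l}=x_0$ for every $l$. Suppose the result holds up to index $m$, with a subsequence (still denoted $\{y^{\d_l}\}$) along which the three claimed convergences hold for all $k\le m$, with limits $\{(\xi_k,x_k)\}_{k=0}^m$.

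To extend to index $m+1$, the first step is to exploit compactness of the DBTS bookkeeping data: $\lambda_m^{\d_l}\in[0,m/(m+\a)]$ is uniformly bounded, and $i_m^{\d_l}$ takes values in the finite set $\{m,\ldots,m j_{\max}\}$, so a further subsequence yields $\lambda_m^{\d_l}\to\hat\lambda_m$ and a stabilization $i_m^{\d_l}\equiv\hat i_m$. I then define
$$\hat\zeta_m:=\xi_m+\hat\lambda_m(\xi_m-\xi_{m-1}),\qquad \hat z_m:=\nabla\Theta^*(\hat\zeta_m),$$
let $\hat\mu_m$ be the noise-free analogue of \eqref{step} evaluated at $\hat z_m$, and set $\hat\xi_{m+1}:=\hat\zeta_m-\hat\mu_m L(\hat z_m)^*J_s^{\Y}(F(\hat z_m)-y)$ and $\hat x_{m+1}:=\nabla\Theta^*(\hat\xi_{m+1})$. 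The convergences $\zeta_m^{\d_l}\to\hat\zeta_m$, $z_m^{\d_l}\to\hat z_m$, $\xi_{m+1}^{\d_l}\to\hat\xi_{m+1}$ and $x_{m+1}^{\d_l}\to\hat x_{m+1}$ follow from the induction hypothesis together with continuity of $\nabla\Theta^*$, $F$, $L$ and $J_s^{\Y}$; the convergence $\mu_m^{\d_l}\to\hat\mu_m$ is handled by the same case split as in the proof of Lemma \ref{stability}, distinguishing $F(\hat z_m)=y$ from $F(\hat z_m)\ne y$ (and within the latter whether $L(\hat z_m)^*J_s^{\Y}(F(\hat z_m)-y)$ vanishes); finally, $\Theta(x_{m+1}^{\d_l})\to\Theta(\hat x_{m+1})$ is extracted exactly as in \eqref{11.7.6}, sandwiching via lower semi-continuity of $\Theta$ from below and the minimizer characterization $x_{m+1}^{\d_l}=\arg\min_x\{\Theta(x)-\l\xi_{m+1}^{\d_l},x\r\}$ from above.

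The main obstacle, and the point where this proof differs substantively from Lemma \ref{stability}, is verifying that the constructed $\{(\xi_k,x_k)\}$ actually lies in $\Gamma_{\bar\mu_0,\bar\mu_1,\nu,q}(\xi_0,x_0)$, i.e.\ that $\hat\lambda_m$ satisfies \eqref{11.6.0} and \eqref{11.6.00} together with $1\le\hat i_m-\hat i_{m-1}\le j_{\max}$. Inequality \eqref{11.6.0} is inherited from \eqref{condition}, which Algorithm \ref{DBTSAlgorithm} enforces in every one of its three branches; passing to the limit in both sides, using the joint convergences above and continuity of $F$, yields the required bound. The cascade of caps in \eqref{11.6.00} requires a branch-wise inspection: when the discrepancy principle triggers, $\lambda_m^{\d_l}=0$ and $\hat\lambda_m=0$ makes the bound trivial; when the acceptance check succeeds, $\lambda_m^{\d_l}=\beta_m(i_{m-1}^{\d_l}+j)$ is bounded by the three caps defining $\beta_m$ and these pass to the caps of \eqref{11.6.00} in the limit; in the \textbf{Else} branch, the choice \eqref{lambda2} forces $\lambda_m^{\d_l}\|\xi_m^{\d_l}-\xi_{m-1}^{\d_l}\|^{p^*}\le\gamma_0\d_l^p\to 0$, from which either $\hat\lambda_m=0$ or $\|\hat\xi_m-\hat\xi_{m-1}\|=0$, and in both cases \eqref{11.6.00} is satisfied since $\hat\lambda_m\le m/(m+\a)$ holds by construction. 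Finally, the integer constraint $1\le\hat i_m-\hat i_{m-1}\le j_{\max}$ is obtained by refining once more so that $i_{m-1}^{\d_l}\equiv\hat i_{m-1}$ is stable along the subsequence, which can be built into the induction hypothesis.
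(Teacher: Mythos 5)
Your proposal is correct and follows essentially the same route as the paper's proof: induction on $n$, extraction of a further subsequence along which $\lambda_m^{\d_l}$ converges and $i_m^{\d_l}$ stabilizes, continuity of $\nabla\Theta^*$, $F$, $L$ and $J_s^{\Y}$ to pass the iterates to the limit, the lower-semicontinuity/minimizer sandwich for $\Theta(x_{m+1}^{\d_l})\to\Theta(\hat x_{m+1})$, and a limit passage in the inequalities \eqref{11.6.0} and \eqref{11.6.00} (your branch-wise inspection, in particular of the \textbf{Else} branch where $\lambda_m^{\d_l}\|\xi_m^{\d_l}-\xi_{m-1}^{\d_l}\|^{p^*}\le\gamma_0\d_l^p\to 0$, actually spells out a step the paper compresses into ``taking $l\to\infty$ in \eqref{11.8.00}''). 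The only point you leave implicit, which the paper states in one sentence, is that the constructed finite prefix $(\xi_0,x_0),\dots,(\hat\xi_{m+1},\hat x_{m+1})$ must be completed to a full infinite member of $\Gamma_{\bar\mu_0,\bar\mu_1,\nu,q}(\xi_0,x_0)$, e.g.\ by continuing the noise-free iteration \eqref{TPGM_E} with $\lambda_n=0$ for all $n\ge m+1$.
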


\begin{proof}
We will use an induction argument on $n$. When $n=0$, nothing needs to be proved since $x_0^{\d_l}=x_0$ and
$\xi_0^{\d_l}=\xi_0$. Assume next that the result is true for $n = m$, i.e. there is a sequence
$\{(\xi_k, x_k)\}\in \Gamma_{\bar \mu_0, \bar \mu_1, \nu, q}(\xi_0, x_0)$ such that
$$
x_k^{\d_l} \rightarrow x_k, \quad \xi_k^{\d_l} \rightarrow \xi_k \quad \mbox{and} \quad
\Theta(x_k^{\d_l}) \rightarrow \Theta(x_k) \quad \mbox{as  } l \rightarrow \infty
$$
for all $0\le k \le m$. In order to show that the result is also true for $n = m+1$, we will obtain a sequence from
$\Gamma_{\bar \mu_0, \bar \mu_1, \nu, q}(\xi_0, x_0)$ by retaining the first $m+1$ terms in $\{(\xi_k, x_k)\}$
and modifying the remaining terms. It suffices to redefine $\xi_{m+1}$ and $x_{m+1}$ because we can then apply
the method (\ref{TPGM_E}) with $\lambda_n =0$ for $n\ge m+1$ to produce the remaining terms.

Note that the combination parameter $\lambda_m^{\d_l}$ determined by Algorithm \ref{DBTSAlgorithm} satisfies
\begin{align}\label{11.8.0}
\frac{1}{p^*(2c_0)^{p^*-1}}\left(\left(\lambda_m^{\d_l}\right)^{p^*} + \lambda_m^{\d_l}\right) \|\xi_m^{\d_l}-\xi_{m-1}^{\d_l}\|^{p^*}
\le \frac{c_1}{\nu} \mu_m^{\d_l} \|F(z_m^{\d_l})-y^{\d_l}\|^s
\end{align}
and
\begin{align}\label{11.8.00}
\begin{split}
& 0\le \lambda_m^{\d_l} \le \min\left\{\frac{q(i_m^{\d_l})}{\|\xi_m^{\d_l}-\xi_{m-1}^{\d_l}\|},
\frac{p^*(2c_0)^{p^*} \rho^p}{4\|\xi_m^{\d_l}-\xi_{m-1}^{\d_l}\|^{p^*}}, \frac{m}{m+\a}\right\} \,  \mbox{ or }\\
&\lambda_m^{\d_l} = \min\left\{ \frac{(2 c_0)^{p^*-1} p^* M\tau^p \d_l^p}{2 \|\xi_m^{\d_l} -\xi_{m-1}^{\d_l}\|^{p^*}}, \frac{m}{m+\a}\right\}
\end{split}
\end{align}
with $1 \le i_m^{\d_l} - i_{m-1}^{\d_l} \le j_{\max}$. By taking a subsequence of $\{y^{\d_l}\}$ if necessary,  we have
\begin{align}\label{11.8.1}
\lim_{l\rightarrow \infty} \lambda_m^{\d_l} = \lambda_m \quad \mbox{ and } \quad
i_m^{\d_l} = i_m \quad \mbox{for all } l
\end{align}
for some number $0\le \lambda_m \le m/(m+a)$ and some integer $i_m$. We now define $\zeta_m$, $z_m$, $\mu_m$, $\xi_{m+1}$ and
$x_{m+1}$ by (\ref{TPGM_E}) with $n=m$. By using (\ref{11.8.1}) and the similar argument in the proof of Lemma \ref{stability},
we have
\begin{align*}
\zeta_m^{\d_l} \rightarrow \zeta_m, \quad z_m^{\d_l} \rightarrow z_m, \quad \mu_m^{\d_l} \rightarrow \mu_m, \quad
\xi_{m+1}^{\d_l} \rightarrow \xi_{m+1} \quad \mbox{and} \quad x_{m+1}^{\d_l} \rightarrow x_{m+1}
\end{align*}
as $l\rightarrow \infty$. Furthermore, we can use the similar argument for proving (\ref{11.7.6}) to obtain
$\Theta(x_{m+1}^{\d_l}) \rightarrow \Theta(x_{m+1})$ as $l\rightarrow \infty$. Now by taking $l\rightarrow \infty$
in (\ref{11.8.0}) and (\ref{11.8.00}) and using the induction hypothesis, we can conclude that $\lambda_m$
and $i_m$ satisfy the requirements specified in the definition of $\Gamma_{\bar\mu_0, \bar \mu_1, \nu q}(\xi_0, x_0)$.
\end{proof}

We are now ready to show the regularization property of the method (\ref{TPGM}) when the combination parameter $\lambda_n^\d$
is chosen by Algorithm \ref{DBTSAlgorithm}.

\begin{theorem}\label{last}
Let $\X$ be reflexive, let $\Y$ be uniformly smooth, and let Assumption \ref{A1} and Assumption \ref{A2} hold.
Let $\tau>1$ and $\bar \mu_0>0$ be chosen to satisfy (\ref{c1}). Assume that the combination parameters $\{\lambda_n^{\d_l}\}$
are chosen by Algorithm \ref{DBTSAlgorithm} with $i_0^{\d_l} =0$. Let $n_\d$ be the integer determined by the
discrepancy principle (\ref{dp}). If $\N(L(x^\dag)) \subset \N(L(x))$ for all $x\in B_{3\rho}(x_0)$, then
$$
\lim_{\d\rightarrow 0} \|x_{n_\d}^\d - x^\dag\| =0 \quad \mbox{ and } \quad
\lim_{\d\rightarrow 0} D_{\xi_{n_\d}^\d}\Theta(x^\dag, x_{n_\d}^\d) =0.
$$
\end{theorem}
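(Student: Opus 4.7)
The plan is to argue by contradiction, mirroring the two-case structure from the proof of Theorem \ref{T6.4}, but replacing the continuous-dependence Lemma \ref{stability} (which fails for DBTS parameters) with the subsequential stability given by Lemma \ref{stability2} and invoking the uniform noise-free convergence from Proposition \ref{uniform}. Specifically, suppose the theorem fails; then there exists $\eps_0>0$ and a sequence $\d_l\to 0$ such that $D_{\xi_{n_{\d_l}}^{\d_l}}\Theta(x^\dag, x_{n_{\d_l}}^{\d_l}) \ge \eps_0$ for all $l$. Pass to a subsequence so that $n_{\d_l}$ either is eventually constant equal to some finite $\hat n$, or tends to $\infty$.

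In the bounded case $n_{\d_l}=\hat n$, I would first apply Lemma \ref{stability2} with $n=\hat n$ (after a further subsequence) to obtain $\{(\xi_k,x_k)\}\in\Gamma_{\bar\mu_0,\bar\mu_1,\nu,q}(\xi_0,x_0)$ with $x_k^{\d_l}\to x_k$, $\xi_k^{\d_l}\to\xi_k$ and $\Theta(x_k^{\d_l})\to\Theta(x_k)$ for $0\le k\le\hat n$. The discrepancy principle gives $\|F(z_{\hat n}^{\d_l})-y^{\d_l}\|\le\tau\d_l$, and the bound of type \eqref{10.12.5} yields $\|F(x_{\hat n}^{\d_l})-y^{\d_l}\|\le(1+C)\tau\d_l$; taking $l\to\infty$ and using continuity of $F$ shows $F(x_{\hat n})=y$. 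Since the sequence $\{(\xi_k,x_k)\}$ lies in $\Gamma$, Theorem \ref{th noisefree} applies (its conditions \eqref{add ass1}, \eqref{condition}, \eqref{add ass3} are verified in the paragraph after Proposition \ref{uniform}) and gives $x_n\to x^\dag$. The monotonicity of $D_{\xi_n}\Theta(x_{\hat n},x_n)$ together with $D_{\xi_{\hat n}}\Theta(x_{\hat n},x_{\hat n})=0$ forces $x_n\equiv x_{\hat n}$ for $n\ge\hat n$, whence $x_{\hat n}=x^\dag$. Writing out the Bregman distance and using $x_{\hat n}^{\d_l}\to x^\dag$, $\Theta(x_{\hat n}^{\d_l})\to\Theta(x^\dag)$, $\xi_{\hat n}^{\d_l}\to\xi_{\hat n}$ yields $D_{\xi_{\hat n}^{\d_l}}\Theta(x^\dag,x_{\hat n}^{\d_l})\to 0$, contradicting the standing lower bound $\eps_0$.

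In the unbounded case $n_{\d_l}\to\infty$, Proposition \ref{uniform} (applicable because the nullspace assumption holds on $B_{3\rho}(x_0)\supset B_{2\rho}(x_0)\cap \D(\Theta)$) furnishes $n(\eps_0/2)$ such that every noise-free trajectory in $\Gamma_{\bar\mu_0,\bar\mu_1,\nu,q}(\xi_0,x_0)$ satisfies $D_{\xi_n}\Theta(x^\dag,x_n)<\eps_0/2$ for all $n\ge n(\eps_0/2)$. For $l$ large enough that $n_{\d_l}>n(\eps_0/2)$, the monotonicity in Proposition \ref{monotonicity} gives
\[
D_{\xi_{n_{\d_l}}^{\d_l}}\Theta(x^\dag,x_{n_{\d_l}}^{\d_l})\le D_{\xi_{n(\eps_0/2)}^{\d_l}}\Theta(x^\dag,x_{n(\eps_0/2)}^{\d_l}).
\]
Applying Lemma \ref{stability2} at the fixed level $n=n(\eps_0/2)$ (along a further subsequence) provides a trajectory $\{(\xi_k,x_k)\}\in\Gamma_{\bar\mu_0,\bar\mu_1,\nu,q}(\xi_0,x_0)$ with $\xi_{n(\eps_0/2)}^{\d_l}\to\xi_{n(\eps_0/2)}$, $x_{n(\eps_0/2)}^{\d_l}\to x_{n(\eps_0/2)}$ and $\Theta(x_{n(\eps_0/2)}^{\d_l})\to\Theta(x_{n(\eps_0/2)})$; passing to the limit in the right-hand side yields $D_{\xi_{n(\eps_0/2)}}\Theta(x^\dag,x_{n(\eps_0/2)})<\eps_0/2$, again contradicting $D_{\xi_{n_{\d_l}}^{\d_l}}\Theta(x^\dag,x_{n_{\d_l}}^{\d_l})\ge\eps_0$. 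Norm convergence $\|x_{n_\d}^\d-x^\dag\|\to 0$ then follows from $p$-convexity via \eqref{pconvex}. The final conclusion $x_*=x^\dag$ in the nullspace case is automatic because Proposition \ref{uniform} already targets $x^\dag$ directly.

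The main technical obstacle I expect is the bounded-index case: the DBTS algorithm's combination parameter $\lambda_n^\d$ need not depend continuously on $\d$, so Lemma \ref{stability} is unavailable and one must extract a subsequence via Lemma \ref{stability2} whose noise-free limit lies in $\Gamma_{\bar\mu_0,\bar\mu_1,\nu,q}(\xi_0,x_0)$; then one has to verify carefully that the discrepancy condition survives the passage to the limit to certify $F(x_{\hat n})=y$, so that Theorem \ref{th noisefree}'s monotonicity can pin down $x_{\hat n}=x^\dag$.
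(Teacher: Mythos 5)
Your proposal is correct and follows essentially the same route as the paper's own proof: a contradiction argument split into the bounded-index case (handled via Lemma \ref{stability2}, the discrepancy principle with the bound of type \eqref{10.12.5}, and the monotonicity argument borrowed from Theorem \ref{T6.4} to identify $x_{\hat n}=x^\dag$) and the unbounded case (handled via Proposition \ref{uniform} combined with Lemma \ref{stability2} and the monotonicity from Proposition \ref{monotonicity}). The only cosmetic difference is your use of $\eps_0/2$ where the paper uses $\eps$ directly in Proposition \ref{uniform}; this changes nothing.
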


\begin{proof}
We will show that
\begin{align}\label{2018.3.8.2}
D_{\xi_{n_\d}^\d}\Theta(x^\dag, x_{n_\d}^\d) \rightarrow 0 \quad \mbox{ as } \d \rightarrow 0
\end{align}
by a contradiction argument. If it is not true, then there is a number $\eps>0$ and a subsequence $\{y^{\d_l}\}$ of $\{y^\d\}$
satisfying $\|y^{\d_l}-y\| \le \d_l$ with $\d_l\rightarrow 0$ as $l\rightarrow \infty$ such that
\begin{align}\label{2018.3.8.1}
D_{\xi_{n_{\d_l}}^{\d_l}} \Theta(x^\dag, x_{n_{\d_l}}^{\d_l}) \ge \eps  \quad \mbox{ for all } l.
\end{align}
We need to consider two cases.

{\it Case 1.} $\{n_{\d_l}\}$ has a finite cluster point $\hat n$. For this case, by using Lemma \ref{stability2} and by
taking a subsequence if necessary, we have $n_{\d_l}=\hat n$ for all $l$ and there is a sequence
$\{(\xi_k, x_k)\}\in \Gamma_{\bar \mu_0, \bar \mu_1, \nu, q}(\xi_0, x_0)$ such that
$$
x_{\hat n}^{\d_l} \rightarrow x_{\hat n}, \quad \xi_{\hat n}^{\d_l} \rightarrow \xi_{\hat n} \quad \mbox{ and } \quad
\Theta(x_{\hat n}^{\d_l}) \rightarrow \Theta(x_{\hat n}) \quad \mbox{ as } l \rightarrow \infty.
$$
Using the similar argument in the proof of Theorem \ref{T6.4} we can obtain
$x_{\hat n} = x^\dag$. Therefore
$$
x_{n_{\d_l}}^{\d_l} \rightarrow x^\dag,  \quad \xi_{n_{\d_l}}^{\d_l} \rightarrow \xi_{\hat n} \quad \mbox{ and } \quad
\Theta(x_{n_{\d_l}}^{\d_l}) \rightarrow \Theta(x^\dag) \quad \mbox{ as } l\rightarrow \infty.
$$
Consequently $D_{\xi_{n_{\d_l}}^{\d_l}}\Theta(x^\dag, x_{n_{\d_l}}^{\d_l}) \rightarrow 0 $ as $l\rightarrow \infty$ which contradicts (\ref{2018.3.8.1}).

{\it Case 2.} $\lim_{l\rightarrow \infty} n_{\d_l}=\infty$. According to Proposition \ref{uniform}, there is an integer
$n(\eps)$ such that
\begin{equation}\label{6.26.51}
D_{\xi_{n(\varepsilon)}} \Theta(x^\dag, x_{n(\eps)}) <\eps \quad
\mbox{ for any } \{(\xi_n, x_n)\}\in \Gamma_{\bar \mu_0, \bar\mu_1, \nu, q}(\xi_0, x_0).
\end{equation}
For this $n(\eps)$, by using Lemma \ref{stability2} and by taking a subsequence if necessary, we can find
$\{(\xi_n, x_n)\}\in \Gamma_{\bar \mu_0, \bar \mu_1, \nu, q}(\xi_0, x_0)$ such that
\begin{equation}\label{6.26.41}
x_n^{\d_l} \rightarrow x_n, \quad \xi_n^{\d_l} \rightarrow \xi_n \quad \mbox{and} \quad \Theta(x_n^{\d_l}) \rightarrow \Theta(x_n)
\quad \mbox{as } l \rightarrow \infty
\end{equation}
for $0\le n\le n(\eps)$. Since $\lim_{l\rightarrow \infty} n_{\d_l}=\infty$, we have $n_{\d_l}>n(\eps)$ for large $l$.
Thus, it follows from (\ref{10.11.2}) in Proposition \ref{monotonicity} that
$$
D_{\xi_{n_{\d_l}}^{\d_l}} \Theta(x^\dag, x_{n_{\d_l}}^{\d_l}) \le D_{\xi_{n(\eps)}^{\d_l}}\Theta(x^\dag, x_{n(\eps)}^{\d_l}).
$$
Letting $l\rightarrow \infty$ and using (\ref{6.26.41}) and (\ref{6.26.51}) we can obtain
\begin{align*}
\limsup_{l\rightarrow \infty} D_{\xi_{n_{\d_l}}^{\d_l}}\Theta(x^\dag, x_{n_{\d_l}}^{\d_l})
\le \limsup_{l\rightarrow \infty} D_{\xi_{n(\eps)}^{\d_l}} \Theta(x^\dag, x_{n(\eps)}^{\d_l})
= D_{\xi_{n(\eps)}}\Theta(x^\dag, x_{n(\eps)}) <\eps
\end{align*}
which is a contradiction to (\ref{2018.3.8.1}).

Combining the above two cases, we therefore obtain (\ref{2018.3.8.2}). By virtue of the $p$-convexity of $\Theta$,
we then have $\|x_{n_\d}^{\d} -x^\dag\| \rightarrow 0$ as $\d\rightarrow 0$.
\end{proof}

\begin{remark}
{\rm A two-point gradient method in Hilbert spaces was considered in \cite{hr17} in which the combination parameter
is chosen by a discrete backtracking search (DBTS) algorithm. The regularization property was proved under the condition
that the noise-free counterpart of the method never terminates at a solution of (\ref{sys}) in finite many steps if the
combination parameter is chosen by the DBTS algorithm. This technical condition seems difficult to be verified
because the exact data $y$ is unavailable. In Theorem \ref{last} we removed this condition by using a uniform
convergence result established in Proposition \ref{uniform}.}
\end{remark}

\section{\bf Numerical simulations} \label{sec:numerics}
\setcounter{equation}{0}

In this section we will present numerical simulations on our TPG-DBTS method, i.e. the two point gradient method (\ref{TPGM})
with the combination parameter $\lambda_n^\d$ chosen by the DBTS algorithm (Algorithm \ref{DBTSAlgorithm}). In order to
illustrate the performance of TPG-DBTS algorithm, we will compare the computational results with the ones obtained by the
Landweber iteration  (\ref{lenear1}) and the Nesterov acceleration of Landweber iteration, i.e. the method (\ref{TPGM}) with $\lambda_n^\d
= n/(n+\a)$ for some $\a\ge 3$. In order to be fair, the step sizes $\mu_n^\d$ involved in all these methods are
computed by (\ref{step}) and all the iterations are terminated by the discrepancy principle with $\tau = 1.05$.

A key ingredient for the numerical implementation is the determination of $x= \nabla \Theta^*(\xi)$ for any given $\xi\in \X^*$
which is equivalent to solving the minimization problem
\begin{equation}\label{eq4.16.1}
x=\arg \min_{z\in \X} \left\{\Theta(z) -\l \xi, z\r\right\}.
\end{equation}
For some choices of $\Theta$, this minimization problem can be easily solved numerically. For instance, when $\X=L^2(\Omega)$
and the sought solution is piecewise constant, we may choose
\begin{align}\label{TV}
\Theta(x) = \frac{1}{2\beta} \|x\|_2^2 + |x|_{TV}
 \end{align}
with a constant $\beta>0$, where $|x|_{TV}$ denotes the total variation of $x$. Then the minimization problem (\ref{eq4.16.1})
becomes the total variation denoising problem
\begin{align}\label{TVdenoising}
x = {\rm arg}\min_{z\in L^2(\Omega)}\left\{\frac{1}{2\beta}\|z-\beta\xi\|_2^2 + |z|_{TV}\right\}
\end{align}
which is nonsmooth and convex. Note that for this $\Theta$, Assumption \ref{A1} holds with $p=2$ and $c_0 = \frac{1}{2\beta}$.
Many efficient algorithms have been developed for solving (\ref{TVdenoising}), including the fast iterative shrinkage-thresholding
algorithm \cite{bt09,bt09(2)}, the alternating direction method of multipliers \cite{BPCPE2011}, and the primal dual hybrid gradient (PDHG)
method \cite{ZC2008}.

In the following numerical simulations we will only consider the situation that the sought solution is piecewise constant.
We will use the PDHG method to solve (\ref{TVdenoising}) iteratively. Our simulations are performed via MATLAB R2012a
on a Lenovo laptop with Intel(R) Core(TM) i5 CPU 2.30GHz and 6GB memory.

\subsection{Computed tomography}

Computed tomography (CT) consists in determining the density of cross sections of a human body by measuring the attenuation of X-rays as
they propagate through the biological tissues. Mathematically, it requires to determine a function supported on a bounded domain from
its line integrals \cite{N2001}. In order to apply our method to solve the CT problems, we need a discrete model. In our numerical experiment,
we assume that the image is supported on a rectangular domain in $\mathbb{R}^2$ which is divided into $I\times J$ pixels so that the discrete
image has size $I\times J$ and can be represented by a vector $x\in \mathbb{R}^{N}$ with $N=I\times J$. We further assume that
there are $n_{\theta}$ projection directions and in each direction there are $p$ X-rays emitted. We want to reconstruct the image by using
the measurement data of attenuation along the rays which can be represented by a vector $b\in \mathbb{R}^{M}$ with $M=n_{\theta}\times p$.
According to a standard discretization of the Radon  transform (\cite{Hansen2012}), we arrive at a linear algebraic system
$$
Fx=b,
$$
where $F$ is a sparse matrix of size $M\times N$ whose form depends on the scanner geometry.

\begin{table}[h]
\caption{ Computed tomography ($\beta=1$, $\tau=1.05$).}\label{t0}
\begin{center}
\begin{tabular}{lllll}
      \hline
$\d_{rel}$ & method      &  $n_\d$  & CPU time (s) &  $\|x_n^\d-x^\dag\|_2/\|x^\dag\|_2$  \\
\hline
0.05      &  Landweber   &    109   &   22.33      & 0.19431     \\
~         &  Nesterov    &    34    &   7.45       & 0.17573     \\
~         &  TPG-DBTS    &    34    &   11.99      & 0.17573     \\
0.01      &  Landweber   &    489   &   106.97     & 0.06410      \\
~         &  Nesterov    &    79    &   16.25      & 0.05923      \\
~         &  TPG-DBTS    &    79    &   29.51      & 0.05923      \\
0.005     &  Landweber   &    879   &   197.65     & 0.03578      \\
~         &  Nesterov    &    109   &   22.25      & 0.03165      \\
~         &  TPG-DBTS    &    109   &   43.02      & 0.03165      \\
0.001     &  Landweber   &    3299  &   775.10     & 0.00694      \\
~         &  Nesterov    &    247   &   51.05      & 0.00521      \\
~         &  TPG-DBTS    &    247   &   105.29     & 0.00521      \\
0.0005    &  Landweber   &    5703  &   1328.31    & 0.00326     \\
~         &  Nesterov    &    351   &   82.93      & 0.00199     \\
~         &  TPG-DBTS    &    351   &   153.79     & 0.00199     \\
\hline
\end{tabular}\\[5mm]
\end{center}
\end{table}

In the numerical simulations we consider only test problems that model the standard 2D parallel-beam tomography. The true image is
taken to be the modified Shepp-Logan phantom of size $256\times 256$ generated by MATLAB. This phantom is widely used in evaluating
tomographic reconstruction algorithms. We use the full angle model with 45 projection angles evenly distributed between 1 and 180 degrees,
with 367 lines per projection. The function \texttt{paralleltomo} in MATLAB package AIR TOOLS \cite{Hansen2012} is used to generate the
sparse matrix $F$, which has the size $M=16515$ and $N=66536$. Let $x^{\dag }$ denote the vector formed by stacking all the columns of
the true image and let $b = F x^\dag$ be the true data. We add Gaussian noise on $b$ to generate a noisy data $b^\d$ with relative noise level
$\delta_{rel}=\|b^\d-b\|_2/\|b\|_2$ so that the noise level is $\d = \d_{rel} \|b\|_2$. We will use $b^\d$ to reconstruct $x^\dag$.
In order to capture the feature of the sought image, we take $\Theta$ to be the form (\ref{TV}) with $\beta =1$.
In our numerical simulations we will use $\xi_0 =0$ as an initial guess. According to (\ref{c1}) we need $\bar \mu_0 <2 (1-1/\tau)/\beta$.
Therefore we take the parameters $\bar \mu_0$ and $\bar \mu_1$ in the definition of $\mu_n^\d$ to be $\bar \mu_0 = 1.8(1-1/\tau)/\beta$
and $\bar \mu_1 = 20000$. For implementing TPG-DBTS method with $\lambda_n^\d$ chosen by Algorithm
\ref{DBTSAlgorithm}, we take $j_{\max} = 1$, $\a=5$, $\gamma_0 = 0.1$ in (\ref{lambda2}), $\gamma_1 = 0.4$; we also choose the function
$q: {\mathbb N}\to {\mathbb N}$ by $q(m) = m^{-1.1}$. For implementing the Nesterov acceleration of Landweber iteration, we take
$\lambda_n^\d = n/(n+\a)$ with $\a = 5$. During the computation, the total variation denoising problem (\ref{TVdenoising}) involved in
each iteration step is solved approximately by the PDHG method after 100 iterations.

\begin{figure}[htp]
  \begin{center}
  \includegraphics[width = 1\textwidth]{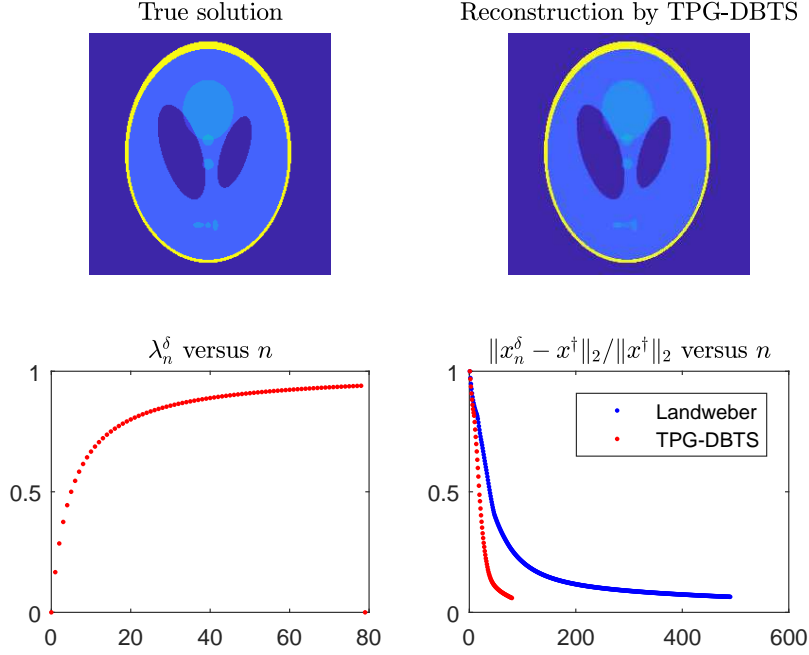}
  \end{center}
  \caption{The computed tomography using noisy data with relative noise level $\d_{rel} = 0.01$.} \label{fig0}
\end{figure}

The computational results by TPG-DBTS, Landweber, and Nesterov acceleration of Landweber are reported in Table \ref{t0}, including
the number of iterations $n_\d$, the CPU running time and the relative errors $\|x_{n_\d}^\d-x^\dag\|_2/\|x^\dag\|_2$, using noisy data with
various relative noise level $\d_{rel}>0$. Table \ref{t0} shows that both TPG-DBTS and Nesterov acceleration, terminated by the discrepancy principle,
lead to a considerable decrease in the number of iterations and the amount of computational time, which demonstrates that these two methods
have the striking acceleration effect. Moreover, both TPG-DBTS and Nesterov acceleration produce more accurate results than Landweber iteration.
With the above setup, our computation shows that TPG-DBTS produces the combination parameter $\lambda_n^\d$ which is exactly
same as the combination parameter $n/(n+\a)$ in Nesterov acceleration in each iteration step. Therefore, TPG-DBTS and Nesterov acceleration
require the same number of iterations and produce the same reconstruction result. Because TPG-DBTS spends more time on determining $\lambda_n^\d$,
the Nesterov acceleration requires less amount of computational time than TPG-DBTS. However, unlike TPG-DBTS, there exists no convergence result
concerning Nesterov acceleration for ill-posed inverse problems.

In order to visualize the reconstruction accuracy of the TPG-DBTS method, we plot in Figure \ref{fig0} the true image, the reconstruction
result by TPG-DBTS using noisy data with relative noise level $\d_{rel} = 0.01$, the curve of $\lambda_n^\d$ versus $n$, and the relative error
$\|x_n^\d-x^\dag\|_2/\|x^\dag\|_2$ versus $n$ for TPG-DBTS and Landweber iteration.

\subsection{Elliptic parameter identification}

We consider the identification of the parameter $c$ in the elliptic boundary value problem
\begin{align}\label{PDE}
\left\{
  \begin{array}{ll}
    -\triangle u + cu = f & \hbox{in $\Omega$}, \\
    u=g & \hbox{on $\partial \Omega$}
  \end{array}
\right.
\end{align}
from an $L^2(\Omega)$-measurement of the state $u$, where $\Omega\subset\mathbb{R}^d$ with $d\leq 3$ is
a bounded domain with Lipschitz boundary $\p \Omega$, $f\in H^{-1}(\Omega)$ and $g\in H^{1/2}(\Omega)$. We assume that
the sought parameter $c^{\dag}$ is in $L^2(\Omega)$. This problem reduces to solving $F(c) =u$ if we define the
nonlinear operator $F: L^2(\Omega)\rightarrow  L^2(\Omega)$ by
\begin{equation}\label{nonlinear}
F(c): = u(c)\,,
\end{equation}
where $u(c)\in H^1(\Omega)\subset  L^2(\Omega)$ is the unique solution of (\ref{PDE}). This operator $F$
is well defined on
\begin{equation*}
\D: = \left\{ c\in  L^2(\Omega) : \|c-\hat{c}\|_{ L^2(\Omega)}\leq \eps_0 \mbox{ for some }
\hat{c}\geq0,\ \textrm{a.e.}\right\}
\end{equation*}
for some positive constant $\eps_0>0$. It is well-known (\cite{ehn96}) that the operator $F$ is weakly closed
and Fr{\'{e}}chet differentiable with
$$
F'(c) h = v    \quad \mbox{ and } \quad F'(c)^* \sigma = - u(c) w
$$
for $c\in \D$ and $h, \sigma \in L^2(\Omega)$, where $v, w\in H^1(\Omega)$ are the unique
solutions of the problems
\begin{align*}
\left\{  \begin{array}{ll}
    -\triangle v + cv = -h u(c) \quad \mbox{ in } \Omega, \\[0.6ex]
    v=0 \quad \mbox{ on } \p \Omega
\end{array}\right. \quad \mbox{ and } \quad
\left\{  \begin{array}{ll}
    -\triangle w + cw = \sigma  \quad \mbox{ in } \Omega, \\[0.6ex]
    w=0 \quad \mbox{ on } \p \Omega
\end{array} \right.
\end{align*}
respectively. Moreover, $F$ satisfies Assumption \ref{A2} (c).

\begin{table}[h]
\caption{ 2-dimensional elliptic parameter estimation ($\beta=10$, $\tau=1.05$).}\label{t1}
\begin{center}
\begin{tabular}{lllll}
      \hline
$\d$      &   method     &  $n_\d$  & CPU time (s)& $\|c_n^\d-c^\dag\|_{L^2}$   \\
\hline
0.005     &  Landweber   &    34    &   9.41      & 0.26884      \\
~         &  Nesterov    &    20    &   5.74      & 0.24815      \\
~         &  TPG-DBTS    &    21    &   6.22      & 0.24815      \\
0.001     &  Landweber   &    184   &   54.49     & 0.12831      \\
~         &  Nesterov    &    72    &   20.58     & 0.11466      \\
~         &  TPG-DBTS    &    72    &   24.33     & 0.11466      \\
0.0005    &  Landweber   &    263   &   74.15     & 0.11096      \\
~         &  Nesterov    &    111   &   29.96     & 0.09485      \\
~         &  TPG-DBTS    &    111   &   41.19     & 0.09485      \\
0.0001    &  Landweber   &   1042   &   324.49    & 0.08107     \\
~         &  Nesterov    &    222   &   65.26     & 0.07688     \\
~         &  TPG-DBTS    &    222   &   103.64    & 0.07688     \\
0.00005   &  Landweber   &    2346  &   785.07    & 0.07221     \\
~         &  Nesterov    &    452   &   147.03    & 0.06609     \\
~         &  TPG-DBTS    &    434   &   242.24    & 0.06342     \\
\hline
\end{tabular}\\[5mm]
\end{center}
\end{table}

In our numerical simulation, we consider the two-dimensional problem with $\Omega = [0, 1]\times [0,1]$ and
the sought parameter is assumed to be
\begin{align*}
c^\dag(x,y) = \left\{
              \begin{array}{ll}
                1, & \hbox{ if } (x-0.65)^2 + (y-0.36)^2 \le 0.18^2, \\
                0.5, & \hbox{ if } (x-0.35)^2 + 4(y-0.75)^2 \le 0.2^2, \\
                0, & \hbox{elsewhere.}
              \end{array}
            \right.
\end{align*}
Assuming $u(c^\dag) = x+y$, we add random Gaussian noise to produce noisy data $u^\d$ satisfying $\|u^\d-u(c^\dag)\|_{L^2(\Omega)}\le \d$
with various noise level $\d>0$. We will use $u^\d$ to reconstruct $c^\dag$. In order to capture the feature of the sought parameter,
we take $\Theta$ to be the form (\ref{TV}) with $\beta = 10$. We will use the initial guess $\xi_{0}=0$ to carry out the iterations.
The parameters $\bar \mu_0$ and $\bar \mu_1$ in the definition of $\mu_n^\d$ are taken to be
$\bar \mu_0 = (1-1/\tau)/\beta$ and $\bar \mu_1 = 20000$. For implementing TPG-DBTS method with $\lambda_n^\d$ chosen by Algorithm
\ref{DBTSAlgorithm}, we take $j_{\max} = 1$, $\a=5$, $\gamma_0 = 0.1$ in (\ref{lambda2}), $\gamma_1 = 0.3$; we also choose the function
$q: {\mathbb N}\to {\mathbb N}$ by $q(m) = m^{-1.2}$. For implementing the Nesterov acceleration of Landweber iteration, we take
$\lambda_n^\d = n/(n+\a)$ with $\a = 5$. In order to carry out the computation, we divide $\Omega$ into $128\times 128$ small squares
of equal size and solve all partial differential equations involved approximately by a multigrid method (\cite{H2016}) via finite difference
discretization. The total variation denoising problem (\ref{TVdenoising}) involved in each iteration step is solved by the PDHG method
after 200 iterations.

\begin{figure}[htp]
  \begin{center}
  \includegraphics[width = 1\textwidth]{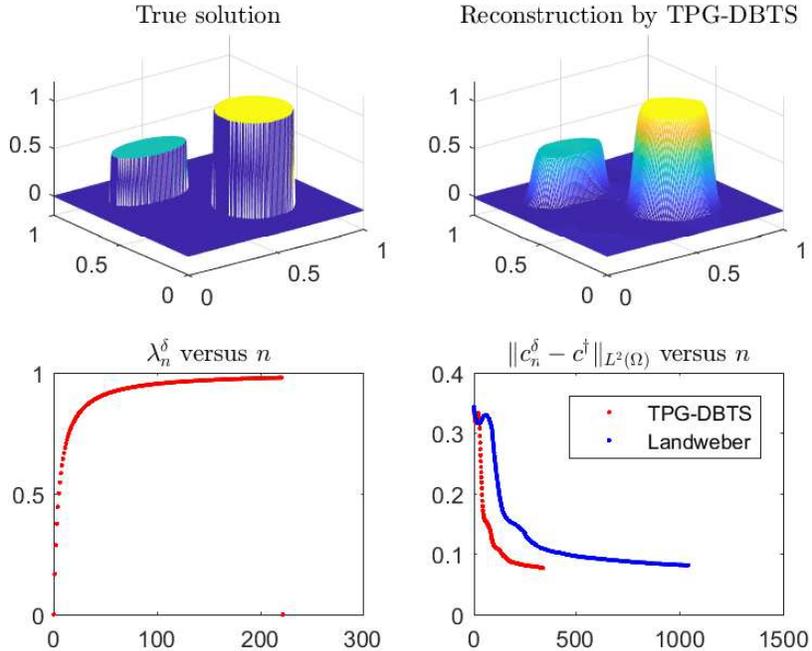}
  \end{center}
  \caption{The 2-dimensional elliptic parameter identification using noisy data with noise level $\delta = 0.0001$.} \label{fig1}
\end{figure}

In Table \ref{t1} we report the computational results by TPG-DBTS, Landweber, and Nesterov acceleration of Landweber, including
the number of iterations $n_\d$, the CPU running time and the absolute errors $\|c_{n_\d}^\d-c^\dag\|_{L^2(\Omega)}$, for various
noise level $\d>0$. Table \ref{t1} shows that both TPG-DBTS and Nesterov acceleration, terminated by the discrepancy principle,
reduce the number of iterations and the amount of computational time significantly, and produce more accurate results
than Landweber iteration. This demonstrates that these two methods have a remarkable acceleration effect.
In order to visualize the reconstruction accuracy of the TPG-DBTS method, we plot in Figure \ref{fig1} the true solution, the reconstruction
result by TPG-DBTS with noise level $\d = 0.0001$, the curve of $\lambda_n^\d$ versus $n$, and the error $\|c_n^\d-c^\dag\|_{L^2(\Omega)}$ versus $n$
for TPG-DBTS and Landweber iteration.

\subsection{Robin coefficient reconstruction}

We consider the heat conduction process in a homogeneous solid rod located on the interval $[0,\pi]$. If the endpoints
of the rod contacts with liquid media, then the convective heat transfer occurs. The temperature field of $u(x,t)$ during
a time interval $[0,T]$ with a fixed time of interest $T>0$ can be modeled by
\begin{align}\label{robin}
\left\{
  \begin{array}{ll}
    u_t-a^2 u_{xx} = 0, & \hbox{$x\in(0,\pi),\ t\in(0,T)$;} \\
    u_x(0,t) = f(t),\quad u_x(\pi,t) + \sigma(t)u(\pi,t) = \varphi(t), & \hbox{$t\in[0,T]$;}\\
    u(x,0) = u_0(x), & \hbox{$x\in [0,\pi]$.}
  \end{array}
\right.
\end{align}
The function $\sigma(t)\ge 0$ represents the corrosion damage, which is interpreted as a Robin coefficient of energy exchange.
We will assume $f$, $\varphi$ and $u_0$ are all continuous. Notice that if $\sigma(t)$ is given, the problem \eqref{robin} is a
well-posed direct problem. The inverse problem of identifying the Robin coefficient $\sigma(t)$ requires additional data to be
specified. We consider the reconstruction of $\sigma(t)$ from the temperature information measured at the boundary
\begin{align*}
u(0,t) = g(t),\quad t\in[0,T]\,.
\end{align*}
Define
\begin{align}
\D: = \{\sigma \in L^2[0,T], 0<\sigma_-\le \sigma\le \sigma_+,\ a.e.\ \textrm{in }[0,T]\},
\end{align}
and define the nonlinear operator $F:\sigma\in\D \to u[\sigma](0,t)\in L^2[0,T]$, where $u[\sigma]$ denotes the unique solution of \eqref{robin}.
Then the above Robin coefficient inversion problem reduces to solving the equation $F(\sigma) = g$. We refer to \cite{wl17} for the
well-posedness of $F$ and the uniqueness of the inverse problem in the $L^2$ sense.
By the standard theory of parabolic equation, one can show that $F$ is Fr\'{e}chet differentiable in the sense that
\begin{align*}
\|F(\sigma+h) - F(\sigma) - F'(\sigma) h \|_{L^2(0,T)} = o(\|h\|_{L^2(0,T)})
\end{align*}
for all $\sigma,\sigma+h\in\D$, where $[F'(\sigma)h](t) = w(0,t)$ and $w$ is the unique solution of
\begin{align*}
\left\{
  \begin{array}{ll}
    w_t-a^2 w_{xx} = 0, & \hbox{$x\in(0,\pi),\ t\in(0,T)$;} \\
    w_x(0,t) = 0,\, \, \,  w_x(\pi,t) + \sigma(t) w(\pi,t) = - h(t) u[\sigma](\pi, t), & \hbox{$t\in[0,T]$;}\\
    w(x,0) = 0, & \hbox{$x\in [0,\pi]$.}
  \end{array}
\right.
\end{align*}
In addition, the adjoint of the Fr\'{e}chet derivative is given by
\begin{align*}
[F'(\sigma)^*\zeta](t) =  u[\sigma](\pi,t)v(\pi,\tau),
\end{align*}
where $v(x,t)$ solves the adjoint system
\begin{align*}
\left\{
  \begin{array}{ll}
    -v_t-a^2 v_{xx} = 0, & \hbox{$x\in(0,\pi),\ t\in(0,T)$;} \\
    v_x(0,t) = \zeta(t),\, \, \, v_x(\pi,t) + \sigma(t) v(\pi,t) =0, & \hbox{$t\in[0,T]$;}\\
    v(x,T) = 0, & \hbox{$x\in [0,\pi]$.}
  \end{array}
\right.
\end{align*}

\begin{table}[h]
  \caption{Robin coefficient reconstruction ($\beta=1, \tau=1.05$).}\label{table3}
    \begin {center}
\begin{tabular}{lllll}
      \hline
$\delta$  & Method     &  $n_\d$  &CPU time (s) &  $\|\sigma_n^\d-\sigma^\dag\|_{L^2}$             \\
\hline
0.1       & Landweber  &  242     &  1.18    & 0.120974     \\
 ~        &  Nesterov  &  76      &  0.56    & 0.119851     \\
 ~        &  TPG-DBTS  &  76      &  1.03    & 0.119851       \\
0.01      & Landweber  &  1431    &  5.33    & 0.042751     \\
 ~        &  Nesterov  &  240     &  1.14    & 0.038788     \\
 ~        &  TPG-DBTS  &  235     &  2.63    & 0.038193       \\
0.001     &  Landweber &  12033   &  42.14   & 0.007127      \\
 ~        &  Nesterov  &  706     &  2.76    & 0.002754       \\
 ~        &  TPG-DBTS  &  732     &  7.44    & 0.002569        \\
0.0001    &  Landweber &  31021   &  109.85  & 0.000702     \\
 ~        &  Nesterov  &  959     &  3.65    & 0.000236     \\
 ~        &  TPG-DBTS  &  2113    &  18.89   & 0.000102      \\
   \hline
    \end{tabular}\\[5mm]
    \end{center}
\end{table}%%

In our numerical simulations, we take $a=5$, $T=1$, and assume the sought Robin coefficient is
\begin{align*}
\sigma^\dag(t) = \left\{
              \begin{array}{ll}
                1.5,  & 0\le t \le 0.1563, \\
                2,    & 0.1563 <t\le 0.3125, \\
                1.2,  & 0.3125 <t\le 0.5469,\\
                2.5,  & 0.5469 <t\le 0.6250,\\
                1.8,  & 0.6250 <t\le 0.7813,\\
                1,    & 0.7813 <t\le 1.
              \end{array}
            \right.
\end{align*}
We also assume that the exact solution of the forward problem (\ref{robin}) with $\sigma= \sigma^\dag$ is
\begin{align}\label{exact}
u(x,t) = e^{-a^2t}\sin x+x^2 + 2a^2t
\end{align}
through which we can obtain the expression of $(f(t),u_0(x),\varphi(t))$ and the inversion input $g(t):=u(0, t)$.
We add random Gaussian noise on $g$ to produce noisy data $g^\d$ satisfying $\|g^\d-g\|_{L^2(0, T)}\le \d$ with various noise
level $\d>0$. We will use $g^\d$ to reconstruct $\sigma^\dag$. In order to capture the feature of the sought Robin coefficient,
we take $\Theta$ to be the form (\ref{TV}) with $\beta = 1$. We will use the initial guess $\xi_{0}=0$ to carry out the computation.
The parameters $\bar \mu_0$ and $\bar \mu_1$ in the definition of $\mu_n^\d$ are taken to be
$\bar \mu_0 = 1.8(1-1/\tau)/\beta$ and $\bar \mu_1 = 20000$. For implementing TPG-DBTS method with $\lambda_n^\d$ chosen by Algorithm
\ref{DBTSAlgorithm}, we take $j_{\max} = 2$, $\a=5$, $\gamma_0 = 0.1$ in (\ref{lambda2}), $\gamma_1 = 0.4$; we also choose the function
$q: {\mathbb N}\to {\mathbb N}$ by $q(m) = m^{-1.1}$. For implementing the Nesterov acceleration of Landweber iteration, we take
$\lambda_n^\d = n/(n+\a)$ with $\a = 5$. During the computation, the initial-boundary value problems for parabolic equation are
transformed into integral equations by the potential theory (\cite{K1989}) and then solved by a boundary element method by dividing
$[0, T]$ into $N = 64$ subintervals of equal length. The total variation denoising problem (\ref{TVdenoising}) involved in each iteration
step is solved approximately by the PDHG method after 200 iterations.

In Table \ref{table3} we report the computational results by TPG-DBTS, Landweber, and Nesterov acceleration of Landweber, using noisy data
for various noise level $\d>0$, which clearly demonstrates the acceleration effect of TPG-DBTS and Nesterov acceleration and shows that
these two methods have superior performance over Landweber iteration. In Figure \ref{fig3} we also plot the computational results by TPG-DBTS
using noisy data with noise level $\d = 0.001$. We note that the combination parameter $\lambda_n^\d$ produced by TPG-DBTS may be different from
$n/(n+\a)$ for some $n$, but eventually $\lambda_n^\d$ becomes the same as the combination parameter $n/(n+\a)$ in Nesterov acceleration.

\begin{figure}[htp]
  \begin{center}
  \includegraphics[width = 1 \textwidth]{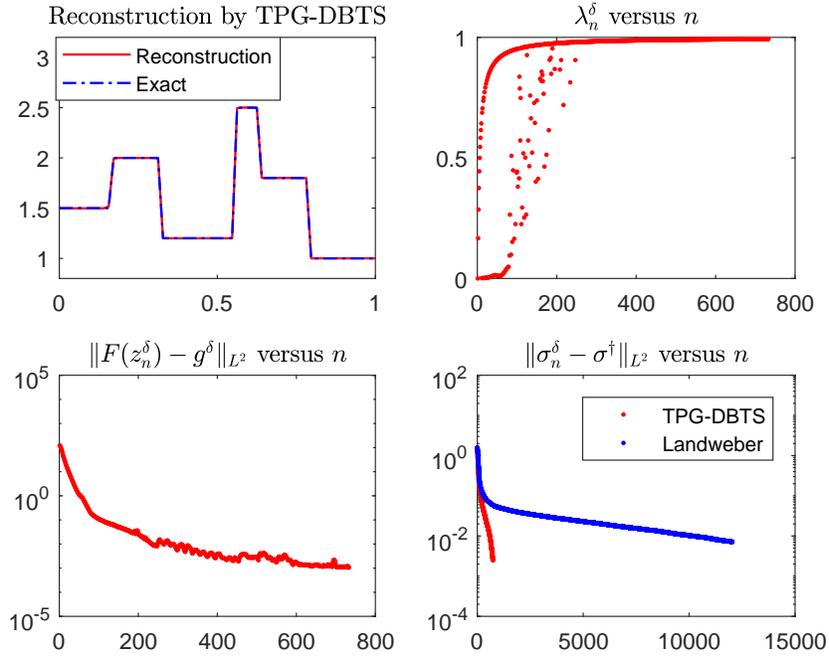}
  \end{center}
  \caption{Robin coefficient reconstruction with $\delta = 0.001$.} \label{fig3}
\end{figure}

\section*{\bf Acknowledgement}

The work of M Zhong is partially supported by the National Natural Science Foundation of China (No. 11501102) and Natural Science Foundation of Jiangsu Province (No. BK20150594).
The work of W Wang is partially supported by the National Natural Science Foundation of China (No. 11871180) and Natural Science Foundation of Zhejiang Province (No. LY19A010009).
The work of Q Jin is partially supported by the Future Fellowship of the Australian Research Council (FT170100231).

\end{document}